\newtheorem{theorem}{Theorem}[section]
\newtheorem{lemma}[theorem]{Lemma}
\theoremstyle{definition}
\newtheorem{definition}[theorem]{Definition}
\newtheorem{example}[theorem]{Example}
\theoremstyle{remark}
\newtheorem{remark}[theorem]{Remark}
\numberwithin{equation}{section}
\newcommand\C{\mathbb{C}}
\newcommand\K{\mathbb{K}}
\newcommand\Z{\mathbb{Z}}
\newcommand\R{\mathbb{R}}
\newcommand\N{\mathbb{N}}
\newcommand\T{\mathbb{T}}
\newcommand\cA{\mathcal{A}}
\newcommand\cE{\mathcal{E}}
\newcommand\cD{\mathcal{D}}
\newcommand\cF{\mathcal{F}}
\newcommand\cM{\mathcal{M}}
\newcommand\cN{\mathcal{N}}
\newcommand\cU{\mathcal{U}}
\newcommand\fA{\mathfrak{A}}
\newcommand\IM{\mathrm{Im}\,}
\newcommand\inpr[2]{\langle{#1,#2}\rangle}
\newcommand{\HS}{\mathrm{H.S.}}
\newcommand{\Ad}{\operatorname{Ad}}
\newcommand{\Aut}{\operatorname{Aut}}
\newcommand{\id}{\operatorname{id}}
\newcommand{\tr}{\operatorname{tr}}
\newcommand{\tK}{\tilde{K}}
\begin{document}
\title{Toeplitz CAR flows and type I factorizations} 

\author{Masaki Izumi}
\address{Department of Mathematics\\ Graduate School of Science\\
Kyoto University\\ Sakyo-ku, Kyoto 606-8502\\ Japan}
\email{izumi@math.kyoto-u.ac.jp}
\thanks{Supported in part by the Grant-in-Aid for Scientific Research (C) 19540214, JSPS}

\author{R. SRINIVASAN}
\address{Chennai Mathematical Institute\\ 
Siruseri 603103, India.}
\email{vasanth@cmi.ac.in}

\subjclass[2000]{46L55, 30D50, 32A10, 81S05}

\keywords{$E_0$-semigroups, Type III, CAR algebra, Quasi-free states}

\begin{abstract} 
Toeplitz CAR flows are a class of $E_0$-semigroups including the first type III example constructed by R. T. Powers. 
We show that the Toeplitz CAR flows contain uncountably many mutually non cocycle conjugate $E_0$-semigroups 
of type III. 
We also generalize the type III criterion for Toeplitz CAR flows employed by Powers 
(and later refined by W. Arveson), and show that Toeplitz CAR flows are always 
either of type I or type III. 
\end{abstract}

\maketitle

\section{Introduction}\label{intro}
The famous E. Wigner's  theorem establishes that any one parameter group of automorphisms $\{\alpha_t: t \in \R\}$ on $B(H)$, 
the algebra of all bounded operators on a separable Hilbert space $H$, is described by a strongly continuous one-parameter unitary group $\{U_t\}$, 
through the relation 
$$\alpha_t(X)=\Ad(U_t)(X)=U_tXU_t^*,~~\forall ~~X \in B(H).$$ 
An analogous statement of Wigner's theorem for an $E_0$-semigroup, a continuous semigroup of unit preserving 
endomorphisms of  $B(H)$, would be that the semigroup is completely determined by the set of 
all intertwining semigroup of isometries. That is the $E_0$-semigroup $\{\alpha_t: t \in (0,\infty)\}$ is completely described, up to cocycle conjugacy, 
by the set of all $C_0$-semigroups of isometries $\{U_t\}$, satisfying 
$$\alpha_t(X)U_t = U_tX, ~~\forall ~~X \in B(H).$$  
A subclass of $E_0$-semigroups, where this analogy is indeed true, are  called as type I $E_0$-semigroups. 
But due to the existence of type II and type III $E_0$-semigroups in abundance, it is well known by now that such an analogy does not hold for 
$E_0$-semigroups in general.

In \cite{PO0}, Powers raised the question whether such an intertwining semigroup of isometries always exists for any given $E_0$-semigroup. 
Later in 1987, he answered this question (see \cite{Po1}) in negative, by constructing an $E_0$-semigroup without any intertwining semigroup of isometries. 
This is the first example of what is called as a type III $E_0$-semigroup. 
For quite some time this was the only known example of a type III $E_0$-semigroup, 
even though it was conjectured that there are uncountably many type III $E_0$-semigroups, which are mutually non cocycle conjugate.  
In 2000, B. Tsirelson  constructed a one-parameter family of nonisomorphic product systems of type III (see \cite{T1}). 
Using previous results of Arveson \cite{Ar}, this leads to the existence of uncountably many $E_0$-semigroups of type III, 
which are mutually non cocycle conjugate.  
Since then there has been a flurry of activity along this direction (see \cite{pdct}, \cite{I} and \cite{IS}). 

In this paper, we turn our attention to the first example of a type III $E_0$-semigroup produced by Powers, 
which can be constructed on the type I factor obtained through the GNS construction 
of the CAR algebra corresponding to a (non-vacuum) quasi-free state. 
Although his purpose in \cite{Po1} is to construct a single type III example, his construction is rather general, 
and it could produce several $E_0$-semigroups, by varying the associated quasi-free states. 
However, it is not at all clear whether they contain more than one cocycle conjugacy classes of type III $E_0$-semigroups. 
As is emphasized in Arveson's book \cite[Chapter 13]{Arv}, 
the 2-point function of Powers' quasi-free state is given by a Toeplitz operator whose symbol is a matrix-valued function 
with a very subtle property. 
Arveson clarified the role of the Toeplitz operator in Powers' construction, and gave the most general form of the symbols for 
which the same construction works. 
We refer to the $E_0$-semigroups obtained in this way as the \textit{Toeplitz CAR-flows}. 
Arveson also made a refinement of a sufficient condition obtained by Powers for the Toeplitz CAR flows to be of type III. 

One of our main purposes in this paper is to show that there exist uncountably many cocycle conjugacy classes of type III Toeplitz CAR flows. 
More precisely, we explicitly give a one parameter family of symbols, including Powers' one, that give rise to mutually non cocycle 
conjugate type III examples. 
We also generalize Powers and Arveson's type III criterion mentioned above, and give a necessary and sufficient condition in full generality, 
which solves Arveson's problem raised in \cite[p.417]{Arv}. 
In particular, our result says that Toeplitz CAR flows are always either of type I or of type III, which is a CAR version of the same result 
obtained in \cite{pdct} (see also \cite{I1} and \cite{IS}) for product systems arising from sum systems, or equivalently, generalized CCR flows. 

As in our previous work \cite{IS}, we employ the local von Neumann algebras of an $E_0$-semigroup as a classification invariant. 
In \cite{IS}, we computed the type of the von Neumann algebras corresponding to bounded open subsets of $(0,\infty)$ 
for a class of generalized CCR flows. 
The key fact in our previous computation is that the von Neumann algebras in question always arise from quasi-free representations of the 
Weyl algebra. 
Since an analogous statement does not seem to be true in the case of Toeplitz CAR flows  
(even if the usual twisting operation in the duality for the CAR algebra is taken into account), we have to take an alternative approach. 
For this reason, we use the notion of a type I factorization, introduced by H. Araki and J. Woods \cite{AW}, consisting of 
the local von Neumann algebras corresponding to a countable partition of a finite interval.  
For each fixed such partition, whether the associated type I factorization is a complete atomic Boolean algebra of type I factors or not 
is a cocycle conjugacy invariant of type III $E_0$-semigroups. 

Part of this work was done when the first named author visited the University of Rome ``Tor Vergata", and 
he would like to thank Roberto Longo and his colleagues for their hospitality.  
The second named author would like to thank the `CMI-TCS Academic Initiative' for the travel support to visit University of Kyoto, during which a part of this work was done.

\section{Preliminaries.}\label{pre}
We use the following notation throughout the paper. 

For a family of von Neumann algebras $\{\cM_{\lambda}\}_{\lambda\in \Lambda}$ acting on the same Hilbert space $H$, 
we denote by $\bigvee_{\lambda\in \Lambda}\cM_\lambda$ the von Neumann algebra generated by their union 
$\bigcup_{\lambda\in \Lambda}\cM_\lambda$. 
We will always denote by $1$ either the identity element in a $C^*$-algebra or the identity operator on a Hilbert space. 
When we need to specify the $C^*$-algebra $\fA$ or the Hilbert space $H$, we use the symbols $1_\fA$ or $1_H$ respectively. 

For a bounded positive operator $A$ on a Hilbert space $H$, we denote by $\tr(A)$ the usual trace of $A$, which could be infinite. 
For $X\in B(H)$, we denote its Hilbert-Schmidt norm by $\|X\|_\HS=\tr(X^*X)^{1/2}$.

For a tempered distribution $f$ on $\R$, we denote by $\hat{f}$ the Fourier transform of $f$ with normalization 
$$\hat{f}(p)=\int_\R f(x)e^{-ipx}dx,\quad f\in L^1(\R).$$ 
For an open set $O\subset \R$, we denote by $\cD(O)$ the set of smooth functions with compact support. 
For a measurable set $E\subset \R$, we denote by $|E|$ and $\chi_E$ its Lebesgue measure and its characteristic function 
respectively.

\subsection{$E_0$-semigroups and product systems}

We briefly recall basics of $E_0$-semigroups and product systems. 
The reader is referred to Arveson's monograph \cite{Arv} for details. 

\begin{definition} 
Let $H$ be a separable Hilbert space. 
A family of unital $*$-endomorphisms $\alpha=\{\alpha_t\}_{t \geq 0}$ of $B(H)$ is an $E_0$-semigroup if 
\begin{itemize}
\item[(i)] The semigroup relation $\alpha_s \circ \alpha_t = \alpha_{s+t}$ holds for $\forall~s,t \in (0,\infty)$ and $\alpha_0=\id$. 
\item[(ii)] The map $t \mapsto \langle \alpha_t(X)\xi,\eta\rangle$ is continuous for every fixed $X \in B(H),~~\xi,\eta \in H$.
\end{itemize}
\end{definition}

For an $E_0$-semigroup $\alpha=\{\alpha_t\}_{t \geq 0}$ and positive $t$, we set 
$$\cE_\alpha(t)=\{T \in B(H); \alpha_t(X) T = TX, ~ \forall ~X \in
B(H)\},$$
which is a Hilbert space with the inner product $\langle T, S \rangle 1_{H}= S^*T$. 
The system of Hilbert spaces $\cE_\alpha=\{\cE_\alpha(t)\}_{t>0}$ satisfies the following axioms of a product system: 

\begin{definition}\label{productsystem}
A product system of Hilbert spaces is a one parameter family of separable complex Hilbert spaces $E=\{E(t)\}_{t>0}$, 
together with unitary operators 
$$U_{s,t} : E(s) \otimes E(t) \rightarrow E(s+t)~ \mbox{for}~ s, t \in (0,\infty),$$
satisfying the following two axioms of associativity and measurability.
\begin{itemize}
\item [(i)] (Associativity) For any $s_1, s_2, s_3 \in (0,\infty)$,
$$U_{s_1, s_2 + s_3}( 1_{E(s_1)} \otimes U_{s_2 ,s_3})=  
U_{s_1+ s_2 , s_3}( U_{s_1 ,s_2} \otimes 1_{E(s_3)}).$$
\item [(ii)] (Measurability) There exists a countable set $E^0$ of
sections $$(0,\infty)\ni t \rightarrow h_t \in E(t)$$ such that $ t  \mapsto 
\langle h_t, h_t^\prime\rangle$ is measurable for any two $h, h^\prime \in E^0$, and the set $\{h_t; h \in E^0\}$ is total 
in $E(t)$, for each $ t \in (0,\infty)$. 
Further it is also assumed that the map $(s,t) \mapsto \langle U_{s,t}(h_s \otimes h_t), h^\prime_{s+t} \rangle$ is measurable 
for any two $h , h^\prime \in E^0$.
\end{itemize}
\end{definition}

Two product systems $(\{E(t)\}, \{U_{s,t}\})$ and $(\{E^\prime(t)\}, \{U^\prime_{s,t}\})$
are said to be isomorphic if there exists a unitary operator $V_t:E(t) \rightarrow E(t)^\prime$, for each
$t \in (0,\infty)$ satisfying $$V_{s+t}U_{s,t}= U_{s,t}^\prime (V_s \otimes V_t).$$

Arveson showed that every product system is isomorphic to a product system arising from an $E_0$-semigroup, and 
that  two $E_0$-semigroups $\alpha$ and $\beta$ are cocycle conjugate if and only if the corresponding product systems 
$\cE_\alpha$ and $\cE_\beta$ are isomorphic. 

For a fixed positive number $a$ and for $0\leq s\leq t\leq a$, we define the local von Neumann algebra $\cA^E_a(s,t)\subset B(E(a))$ 
for the interval $(s,t)$ by 
$$\cA_a^E(s,t)=U_{s,t-s,a-t}(\C1_{E(s)}\otimes B(E(t-s))\otimes \C1_{E(a-t)}){U_{s,t-s,a-t}}^*,$$
where $U_{s,t-s,a-t}=U_{t,a-t}(U_{s,t-s}\otimes 1_{E_{a-t}})=U_{s,a-s}(1_{E_s}\otimes U_{t-s,a-t})$. 
For any open subset $O\subset [0,a]$, we set $\cA^E_a(O)=\bigvee_{I\subset O}\cA^E(I)$, where $I$ runs over all 
intervals contained in $O$. 
When $a=1$, we simply write $\cA^E(s,t)$ for $\cA^E_a(s,t)$. 
When $E=\cE_\alpha$, we often identify $B(E(a))$ with $B(H)\cap\alpha_a(B(H))'$. 
When we need to distinguished them, we denote by $\sigma$ the isomorphism from $B(H)\cap\alpha_a(B(H))'$ onto 
$\cA^E_a(0,a)$ given by the left multiplication. 
By this identification, the inclusion $\cA^E_a(s,t)\subset B(E(a))$ is identified with 
$$\alpha_s(B(H)\cap \alpha_{t-s}(B(H))')\subset B(H)\cap \alpha_a(B(H))'.$$ 

In what follows, we often omit $U_{s,t}$, and simply write $xy$ instead of $U_{s,t}(x\otimes y)$ if there is no possibility 
of confusion. 

\begin{definition}\label{unit}
A unit for a product system $E$ is a non-zero section $$u=\{u_t\in E_t;t > 0 \},$$ such that the map $t \mapsto \langle u_t, h_t\rangle$ is
measurable for any $h \in E^0$ and 
$$u_s u_t= u_{s+t},   \quad \forall s,t \in (0,\infty).$$
\end{definition}

In order to avoid possible confusion, we refer to the condition $\|x\|=1$ for a vector $x\in E(t)$ as ``normalized" instead of ``unit" throughout the paper.   
An intertwining $C_0$-semigroup of isometries of an $E_0$-semigroup $\alpha$ is naturally identified with a normalized unit for $\cE_\alpha$. 

A product system ($E_0$-semigroup) is said to be of type I, if units exist for the product system and 
they generate the product system, i.e. for any fixed $t \in (0,\infty)$, the set $$\{u^1_{t_1}u^2_{t_2}  
\cdots u^n_{t_n}; \sum_{i=1}^n t_i = t, u^i \in \cU_E\},$$ 
is a total set in $E_t$, where $\cU_E$ is the set of all units. 
It is of type II if units exist but they do not generate the product system. 
An $E_0$-semigroup is said to be spatial if it is either of type I or type II. 
We say a product system to be of type III, or unitless, if there does not exist any unit.

Type I product systems are further classified into type I$_n$, $n=1,2,\cdots,\infty$, according to 
their indices $n$. 
There exists only one isomorphism class of type $I_n$ product systems.

We recall V. Liebscher's useful criterion  \cite[Corollary 7.7]{L} for isomorphic product systems in terms of 
the local von Neumann algebras. 

\begin{theorem}[Liebscher]\label{isomorphic} Let $E$ and $F$ be product systems. 
If there is an isomorphism $\rho$ from $B(E(1))$ onto $B(F(1))$ such that 
$\rho(\cA^E(0,t))=\cA^F(0,t)$ for $t$ in a dense subset of $(0,1)$, then 
$E$ and $F$ are isomorphic. 
\end{theorem}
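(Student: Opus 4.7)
The plan is to construct a family of unitaries $V_t\colon E(t)\to F(t)$, $t>0$, satisfying the intertwining relation $V_{s+t}U^E_{s,t}=U^F_{s,t}(V_s\otimes V_t)$, which is precisely the data of an isomorphism of product systems. First I would promote the hypothesis from the dense set to all of $(0,1)$: the filtration $t\mapsto\cA^E(0,t)$ is monotone and continuous in an appropriate sense, so sandwiching $t$ between sequences in the dense set from below and above extends the identity $\rho(\cA^E(0,t))=\cA^F(0,t)$ to every $t\in(0,1)$. Since $B(E(1))=\cA^E(0,s)\otimes\cA^E(s,1)$ is the tensor product of commuting type~I factors, $\cA^E(s,1)$ is the commutant of $\cA^E(0,s)$ in $B(E(1))$, so passing $\rho$ through commutants gives $\rho(\cA^E(s,1))=\cA^F(s,1)$, and intersecting yields $\rho(\cA^E(s,t))=\cA^F(s,t)$ for every subinterval $(s,t)\subset(0,1)$.

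Because $\rho$ is a $*$-isomorphism between type~I factors, it is spatially implemented by a unitary $V\colon E(1)\to F(1)$, unique up to a scalar. Conjugating $V$ through $U^E_{s,1-s}$ and $U^F_{s,1-s}$ produces a unitary $\tilde V_s\colon E(s)\otimes E(1-s)\to F(s)\otimes F(1-s)$ that conjugates $B(E(s))\otimes\C 1$ onto $B(F(s))\otimes\C 1$ (and the complementary factor likewise). The standard splitting lemma for unitaries respecting tensor factorizations then yields a decomposition $\tilde V_s=V_s\otimes V_s'$ with unitaries $V_s\colon E(s)\to F(s)$ and $V_s'\colon E(1-s)\to F(1-s)$, unique up to a reciprocal scalar. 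Applying this splitting simultaneously to the finer partition $(0,s)\cup(s,s')\cup(s',1)$ and invoking uniqueness, one sees that $V_{s'}$ itself splits under $E(s')=E(s)\otimes E(s'-s)$ as $V_s\otimes W$ (up to a phase), with $W$ agreeing up to a phase with the factor $V_{s'-s}$ arising from the interval $(0,s'-s)$.

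Setting $V_1:=V$ and extending to $t\geq 1$ recursively by $V_t=U^F_{1,t-1}(V_1\otimes V_{t-1})(U^E_{1,t-1})^*$, the compatibility argument of the previous paragraph yields $V_{s+t}U^E_{s,t}=\lambda(s,t)\,U^F_{s,t}(V_s\otimes V_t)$ for a measurable scalar two-cocycle $\lambda(s,t)$. I would then trivialize $\lambda$ by absorbing a measurable phase factor into $V_t$, reducing to the exact intertwining relation. The measurability axiom for the resulting isomorphism of product systems is inherited from the measurability of $\rho$ and of a chosen reference measurable section of $E$.

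The principal obstacle lies in managing the scalar ambiguities inherent in the splitting lemma so that the family $\{V_t\}$ satisfies the product rule on the nose rather than merely up to phases. Making consistent phase choices across all scales $t$, trivializing the resulting scalar two-cocycle, and verifying measurability is the technical heart of the argument; once this is done, the verification of the intertwining relation for $s+t\geq 1$ follows automatically from associativity and the case $s+t<1$.
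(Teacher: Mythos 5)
The paper offers no proof of this statement; it is imported verbatim from Liebscher's memoir (his Corollary 7.7), so there is no internal argument to compare yours against. Judged on its own, your proposal is sound in its preliminary steps — extending the hypothesis from the dense set to all $t\in(0,1)$ by continuity of the filtration, passing to commutants to obtain $\rho(\cA^E(s,t))=\cA^F(s,t)$ for all subintervals, implementing $\rho$ by a unitary $V\colon E(1)\to F(1)$, and splitting $V$ as $V_s\otimes V_s'$ relative to $E(1)=E(s)\otimes E(1-s)$ — but it has a genuine gap at its central step.

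The gap is the claim that, in the refinement $(0,s)\cup(s,s')\cup(s',1)$, the middle component $W\colon E(s'-s)\to F(s'-s)$ of $V$ agrees up to a phase with $V_{s'-s}$, the component of $V$ over $(0,s'-s)$. These two unitaries implement, respectively, the restriction of $\rho$ to $\cA^E(s,s')$ and to $\cA^E(0,s'-s)$, each transported to an isomorphism $B(E(s'-s))\to B(F(s'-s))$ via the canonical identifications; nothing in the hypothesis relates these two restrictions. Two isomorphisms $B(H)\to B(K)$ differ by an inner automorphism $\Ad(u)$, not by a scalar, so the defect in your intertwining relation is a \emph{unitary-valued} two-cochain, not a scalar two-cocycle. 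To see the construction actually break down, take $E=F$ the CCR flow over $L^2(0,1)$ and $\rho=\Ad(\Gamma(u))$ with $u$ a decomposable multiplication unitary that is not translation invariant: $\rho$ preserves every $\cA^E(0,t)$, yet your $V_t=\Gamma(u|_{(0,t)})$ satisfies $V_{s+t}=U^F_{s,t}(V_s\otimes W_{s,t})(U^E_{s,t})^*$ with $W_{s,t}=\Gamma(u(\cdot+s)|_{(0,t)})$, which differs from $V_t$ by a nontrivial unitary, not a phase. Correcting the family $\{V_t\}$ by unitaries so as to obtain a genuinely multiplicative (and measurable) family is the real content of Liebscher's theorem, and it is exactly where his heavier machinery enters; phase bookkeeping and trivializing a scalar cocycle do not suffice. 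As written, your argument establishes only that $\rho$ carries each local algebra onto the corresponding one — which is essentially the hypothesis, not the conclusion.
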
 

\subsection{Type I factorizations}\label{invariant for type III}
In this subsection, we introduce a new classification invariant for type III product systems using the notion of type I factorizations 
introduced by Araki and Woods \cite{AW}. 
Throughout this subsection, every index set is assumed to be countable, and every Hilbert space is assumed to be separable. 

\begin{definition} 
Let $H$ be a Hilbert space. 
We say that a family of type I subfactors $\{\cM_{\lambda}\}_{\lambda\in \Lambda}$ of $B(H)$ is a \textit{type I factorization} of $B(H)$ if 
\begin{itemize}
\item [(i)] $\cM_\lambda\subset \cM_\mu'$ for any $\lambda,\mu\in \Lambda$ with $\lambda\neq \mu$, 
\item [(ii)] $B(H)=\bigvee_{\lambda\in \Lambda}\cM_\lambda$.
\end{itemize}
We say that a type I factorization $\{\cM_{\lambda}\}_{\lambda\in \Lambda}$ is a \textit{complete atomic Boolean algebra of type I factors} 
(abbreviated as \textit{CABATIF}) if for any subset $\Gamma\subset \Lambda$, the von Neumann algebra $\bigvee_{\lambda\in \Gamma}\cM_{\lambda}$ is a type I factor. 
\end{definition}

Two type I factorizations $\{\cM_{\lambda}\}_{\lambda\in \Lambda}$ of $B(H)$ and $\{\cN_{\mu}\}_{\mu\in \Lambda'}$ of $B(H')$ 
are said to be unitarily equivalent if there exist a unitary $U$ from $H$ onto $H'$ and a bijection $\sigma:\Lambda\rightarrow \Lambda'$ 
such that $U\cM_\lambda U^*=\cN_{\sigma(\lambda)}$. 

\begin{example}\label{invariant for E} Let $E$ be a product system, and let $\{a_n\}_{n=0}^\infty$ be 
a strictly increasing sequence of non-negative numbers starting 
from $0$ and converging to $a<\infty$. 
Then $\{\cA_a^E(a_n,a_{n+1})\}_{n=0}^\infty$ is a type I factorization of $B(E(a))$ because 
$$B(E(a))=\bigvee_{0<t<a}\cA^E_a(0,t)$$ 
holds (see \cite[Proposition 4.2.1]{Arv}). 
For a fixed sequence as above, the unitary equivalence class of the type I factorization $\{\cA_a^E(a_n,a_{n+1})\}_{n=0}^\infty$ is an isomorphism 
invariant of the product system $E$.  
In particular, whether it is a CABATIF  
or not will be used to distinguish concrete type III examples  in Section \ref{examples}. 
As we will see now, this invariant may be useful only in the type III case. 
\end{example}

When $\{\cM_\lambda\}_{\lambda\in \Lambda}$ is a type I factorization of $B(H)$, we say that a non-zero vector $\xi$ is \textit{factorizable} 
if for any $\lambda$, there exists a minimal projection $p_\lambda$ of $\cM_\lambda$ such that $p_\lambda\xi=\xi$. 

Araki and Woods characterized a CABATIF as a type I factorization with a factorizable vector. 
Since we need a more precise statement, we briefly recall basics of the incomplete tensor product space 
(abbreviated as ITPS) now. 

Let $\{(H_\lambda,\xi_\lambda)\}_{\lambda\in \Lambda}$ be a family of Hilbert spaces $H_\lambda$ with normalized vectors 
$\xi_\lambda\in H_\lambda$. 
Let $\cF(\Lambda)$ be the set of all finite subsets of $\Lambda$, which is a directed set with respect to the inclusion relation. 
For $F_1,F_2\in \cF(\Lambda)$ with $F_1\subset F_2$, we introduce an isometric embedding $V_{F_1,F_2}$ from $\bigotimes_{\lambda\in F_1}H_\lambda$ into 
$\bigotimes_{\lambda\in F_2}H_\lambda$ by 
$$V_{F_1,F_2}\eta= \eta\otimes (\bigotimes_{\mu\in F_2\setminus F_1}\xi_{\mu}),\quad \eta\in \bigotimes_{\lambda\in F_1}H_\lambda. $$
Then the ITPS 
$$H=\bigotimes_{\lambda\in \Lambda}{}^{(\otimes \xi_{\lambda})} H_\lambda$$ 
of the Hilbert spaces $\{H_\lambda\}_{\lambda\in \Lambda}$, with respect to the reference vectors 
$\{ \xi_\lambda \}_{\lambda \in \Lambda}$, is the completion of the direct limit of the directed family 
$\{\bigotimes _{\lambda\in F}H_\lambda\}_{F\in \cF(\Lambda)}$. 
When there is no possibility of confusion, we omit the superscript $(\otimes \xi_{\lambda})$ for simplicity. 
We denote by $V_{F,\infty}$ the canonical embedding of $\bigotimes_{\lambda\in F}H_\lambda$ into $H$. 

The product vector $\xi=\bigotimes_{\lambda\in \Lambda} \xi_\lambda\in H$ is understood as 
$V_{F,\infty}\bigotimes_{\lambda\in F}\xi_\lambda$, which does not depend on $F\in \cF(\Lambda)$. 
More generally, if $\{\eta_\lambda\}_{\lambda\in\Lambda}$, $\eta_\lambda\in H_\lambda$, is a family of vectors such that 
$0<\prod_{\lambda\in \Lambda}\|\eta_\lambda\|<\infty$, and 
$$\sum_{\lambda\in \Lambda}|\inpr{\eta_\lambda}{\xi_\lambda}-1|<\infty,$$
then the net $\{V_{F,\infty}\bigotimes_{\lambda\in F} \eta_\lambda\}_{F\in \cF(\Lambda)}$ converges in $H$. 
The product vector $\eta=\bigotimes_{\lambda\in \Lambda} \eta_\lambda$ is defined as its limit. 
Two product vectors $\eta=\bigotimes_{\lambda\in \Lambda}\eta_\lambda$ and $\zeta=\bigotimes_{\lambda\in \Lambda}\zeta_\lambda$
satisfy
$$\inpr{\eta}{\zeta}=\prod_{\lambda\in \Lambda}\inpr{\eta_\lambda}{\zeta_\lambda}.$$

For a subset $\Lambda_1\subset \Lambda$, we often identify $\bigotimes_{\lambda\in \Lambda}H_\lambda$ with 
$$(\bigotimes_{\lambda\in \Lambda_1}H_\lambda)\otimes(\bigotimes_{\mu\in \Lambda\setminus\Lambda_1}H_\mu)$$ 
in a canonical way. 
When $\Lambda_1$ consists of only one point $\lambda$, we set 
$$\cM_\lambda:=B(H_\lambda)\otimes \C1_{\bigotimes_{\mu\neq \lambda}H_\mu}\subset B(H).$$ 
Then $\{\cM_\lambda\}_{\lambda\in \Lambda}$ is a CABATIF. 
Any type I factorization unitarily equivalent to this $\{\cM_\lambda\}_{\lambda\in \Lambda}$ is said to be a \textit{tensor product factorization}. 
Note that there is only one tensor product factorization, up to unitary equivalence, with each constituent type I factor infinite dimensional. 

One can find the following theorem in \cite[Lemma 4.3, Theorem 4.1]{AW}.  

\begin{theorem}[Araki--Woods]\label{ArakiWoods} 
A type I factorization is a CABATIF if and only if it has a factorizable vector. 
When this condition holds, then it is a tensor product factorization. 
\end{theorem}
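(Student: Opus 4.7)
The plan is to prove the two nontrivial implications; the remaining content (a tensor product factorization has a factorizable vector, namely the reference product vector, and is trivially a CABATIF) is immediate from the definitions. The two implications to establish are: (a) a type I factorization with a factorizable vector is unitarily equivalent to a tensor product factorization; and (b) a CABATIF admits a factorizable vector. Both combined yield the stated equivalence, and (a) supplies the concluding assertion that any such factorization is a tensor product factorization.

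For (a), given a factorizable $\xi$ with minimal projections $e_\lambda\in \cM_\lambda$ satisfying $e_\lambda\xi=\xi$, I will first exploit minimality to note $e_\lambda\cM_\lambda e_\lambda=\C e_\lambda$, which implies that for any finite $F\subset\Lambda$ the commuting product $e_F:=\prod_{\lambda\in F}e_\lambda$ is a minimal projection of the type I factor $\cM_F:=\bigvee_{\lambda\in F}\cM_\lambda$ still satisfying $e_F\xi=\xi$. I will then fix concrete models $\cM_\lambda\cong B(H_\lambda)$ in which $e_\lambda$ is realized as the projection onto a unit vector $\xi_\lambda\in H_\lambda$, form the ITPS $\hat{H}=\bigotimes_\lambda(H_\lambda,\xi_\lambda)$, and define a map $U:\hat{H}\to H$ on the dense algebraic span by
\[
U\bigl(V_{F,\infty}\bigotimes_{\lambda\in F}a_\lambda\xi_\lambda\bigr)=\Bigl(\prod_{\lambda\in F}a_\lambda\Bigr)\xi,\qquad a_\lambda\in\cM_\lambda.
\]
The key computation $e_F y e_F=\inpr{\xi}{y\xi}\,e_F$ for $y\in\cM_F$ shows that the vector state of $\xi$ restricted to $\cM_F$ equals the product state $\bigotimes_{\lambda\in F}\omega_{\xi_\lambda}$, which makes $U$ an isometry; surjectivity follows from density of $\bigcup_F\cM_F\xi$ in $B(H)\xi=H$. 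Since $U$ intertwines $B(H_\lambda)\otimes\C$ with $\cM_\lambda$, the factorization is a tensor product factorization.

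For (b), I will enumerate $\Lambda=\{\lambda_1,\lambda_2,\ldots\}$ and set $\mathcal{R}_n=\bigvee_{i\le n}\cM_{\lambda_i}$, $\cS_n=\bigvee_{i>n}\cM_{\lambda_i}$. Both are type I factors by the CABATIF hypothesis; since they commute and jointly generate $B(H)$, a short factor argument upgrades $\cS_n\subset \mathcal{R}_n'$ to an equality, producing a nested chain of tensor decompositions $H=K_n\otimes L_n$ with $\mathcal{R}_n=B(K_n)\otimes\C$, $\cS_n=\C\otimes B(L_n)$, together with compatible refinements $L_n=M_n\otimes L_{n+1}$ and $K_{n+1}=K_n\otimes M_n$ under which $\cM_{\lambda_{n+1}}$ corresponds to $\C\otimes B(M_n)\otimes\C$. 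A factorizable vector is then precisely a unit vector that appears simultaneously as a product $k_n\otimes\ell_n$ in every $H=K_n\otimes L_n$ with $k_n$ itself a product $m_0\otimes\cdots\otimes m_{n-1}\in K_n$, for then the minimal projection $\C\otimes|m_{i-1}\rangle\langle m_{i-1}|\otimes\C$ of $\cM_{\lambda_i}$ fixes $\xi$ at every $i$. The hard part is exhibiting such an ``infinite product'' unit vector inside the concrete $H$: nothing in the algebraic data immediately guarantees that the naive candidates built by iterated best-rank-one Schmidt approximations converge, and one needs to exploit the CABATIF hypothesis in its global form $\cS_n=\mathcal{R}_n'$ at every level to control the tails. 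This is the central content of the Araki--Woods construction in \cite{AW} and the principal obstacle in the proof.
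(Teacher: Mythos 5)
The paper itself does not prove this theorem; it quotes it and cites \cite{AW} (Lemma 4.3 and Theorem 4.1 there), so there is no internal argument to compare yours against. Judged as a standalone proof, your part (a) is sound and is essentially the Araki--Woods argument: the identity $e_F y e_F=\inpr{y\xi}{\xi}e_F$ for $y\in\cM_F$ does show that the vector state of $\xi$ factorizes over finite subsets, the map $U$ is well defined and isometric on the algebraic direct limit (consistency under the embeddings $V_{F_1,F_2}$ holds because appending $\xi_\mu$ corresponds to multiplying by $1$), and surjectivity follows since any nonzero vector is cyclic for $B(H)$. Together with the routine observation that a tensor product factorization is a CABATIF, this gives the implication ``factorizable vector $\Rightarrow$ CABATIF'' and the final assertion of the theorem.

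Part (b) is where there is a genuine gap, and you have named it yourself: you reduce the claim to exhibiting a unit vector of $H$ that is simultaneously a product $m_0\otimes\cdots\otimes m_{n-1}\otimes\ell_n$ in every decomposition $H=K_n\otimes L_n$, and then stop, stating that producing such an infinite product vector is ``the central content of the Araki--Woods construction.'' That is correct, but it means the implication CABATIF $\Rightarrow$ factorizable vector is asserted rather than proved. The missing step is quantitative, not formal: one must choose the $m_n\in M_n$ (e.g.\ as leading Schmidt vectors of a fixed unit vector $\Omega$ relative to $H=K_{n+1}\otimes L_{n+1}$) and then prove a summability estimate of the type $\sum_n\bigl|\inpr{\eta_n}{\xi_n}-1\bigr|<\infty$ guaranteeing that the net of finite products converges in $H$; the CABATIF hypothesis enters precisely here, through the fact that every tail algebra $\cS_n$ is type I so that $\cS_n=\mathcal{R}_n'$ and the Schmidt coefficients can be controlled uniformly in $n$. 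Without this estimate the argument is circular, since the existence of the product vector is exactly the conclusion sought. Note also that this unproved direction is the one the paper actually leans on in Section 4 (Lemma \ref{decomposable vector} extracts a factorizable vector from the CABATIF assumption), so it cannot be treated as a side remark. To complete your write-up you would need to either import \cite[Theorem 4.1]{AW} explicitly as a black box, as the paper does, or carry out the convergence argument.
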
 

When a product system $E$ has a unit, then it gives a factorizable vector of the type I factorization 
$\{\cA_a^E(a_n,a_{n+1})\}_{n=0}^\infty$ in Example \ref{invariant for E}, which is necessarily a CABATIF 
thanks to Theorem \ref{ArakiWoods}. 

We use the following lemma in Section \ref{factorization}. 

\begin{lemma}\label{fixed product vector} Let $H=\bigotimes_{\lambda\in \Lambda}H_\lambda$ be the ITPS of Hilbert spaces 
$\{H_\lambda\}_{\lambda\in \Lambda}$ with respect to reference vectors $\{\xi_\lambda\}_{\lambda\in \Lambda}$, and let 
$$\rho_\lambda:B(H_\lambda)\ni X\mapsto X\otimes 1_{\bigotimes_{\mu\neq \lambda}H_\mu}\in \cM_\lambda$$ 
be the canonical isomorphism. 
Let $R\in B(H)$ be a self-adjoint unitary such that $R\cM_\lambda R^*=\cM_\lambda$ for all $\lambda\in \Lambda$. 
Then there exist self-adjoint unitaries $R_\lambda\in B(H_\lambda)$ and a product vector 
$\eta=\bigotimes_{\lambda\in \Lambda}\eta_\lambda$ such that 
\begin{itemize}
\item [(i)]  for $\forall \lambda\in \Lambda$ and $\forall X\in \cM_\lambda$,
$$\rho_\lambda(R_\lambda) X \rho_\lambda(R_\lambda^*)=RXR^*,$$
\item [(ii)] $R_\lambda \eta_\lambda=\eta_\lambda$ for $\forall \lambda\in \Lambda$, 
\item [(iii)] either $R\eta=\eta$ or $R\eta=-\eta$. 
\end{itemize}
\end{lemma}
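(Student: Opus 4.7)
The plan is to proceed in three stages: obtain the local operators $R_\lambda$ via Wigner's theorem, recognize $R\xi$ as a factorizable vector and invoke Theorem~\ref{ArakiWoods} to extract an Araki--Woods convergence condition that lets us build the product vector $\eta$, and finally verify $R\eta=\pm\eta$ by a product-state computation.

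For the first stage, since $\cM_\lambda\cong B(H_\lambda)$ is a type I factor preserved by $\Ad R$, Wigner's theorem produces a unitary $R_\lambda\in B(H_\lambda)$ implementing $\Ad R|_{\cM_\lambda}$ via $\rho_\lambda$, unique up to a phase. The relation $R^2=1$ forces $R_\lambda^2\in\C\cdot 1_{H_\lambda}$, and a choice of square root normalizes $R_\lambda^2=1$; such a unitary is automatically self-adjoint. A residual sign $R_\lambda\mapsto -R_\lambda$ will be fixed later.

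For the second stage, the one-line computation $\omega_{R\xi}(\rho_\lambda(Y))=\omega_\xi(\rho_\lambda(R_\lambda Y R_\lambda))=\omega_{R_\lambda\xi_\lambda}(Y)$ for $Y\in B(H_\lambda)$ shows that $\omega_{R\xi}|_{\cM_\lambda}$ is the pure state $\omega_{R_\lambda\xi_\lambda}$, so $R\xi$ is a factorizable vector of the CABATIF $\{\cM_\lambda\}$. Since a factorizable vector in a tensor product factorization is (up to a phase) a product vector in the ITPS by Theorem~\ref{ArakiWoods}, $R\xi$ is proportional to $\bigotimes_\lambda R_\lambda\xi_\lambda$ in $H$. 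Decomposing $\xi_\lambda=a_\lambda\eta_\lambda^+ + b_\lambda\eta_\lambda^-$ in the spectral decomposition $H_\lambda=H_\lambda^+\oplus H_\lambda^-$ of $R_\lambda$, with $a_\lambda,b_\lambda\geq 0$ and $a_\lambda^2+b_\lambda^2=1$, one has $\langle R_\lambda\xi_\lambda,\xi_\lambda\rangle=a_\lambda^2-b_\lambda^2$, so convergence of this product vector in the ITPS (after the optimal signs) forces the Araki--Woods condition
$$\sum_{\lambda\in\Lambda}\bigl(1-|a_\lambda^2-b_\lambda^2|\bigr)=2\sum_{\lambda\in\Lambda}\min(a_\lambda^2,b_\lambda^2)<\infty.$$
For each $\lambda$ pick $\epsilon_\lambda\in\{+,-\}$ realizing $\max(a_\lambda,b_\lambda)$, set $\eta_\lambda:=\eta_\lambda^{\epsilon_\lambda}$, and replace $R_\lambda$ by $\epsilon_\lambda R_\lambda$ if necessary so that $R_\lambda\eta_\lambda=\eta_\lambda$; this preserves (i). The elementary bound $1-\max(a_\lambda,b_\lambda)\leq\min(a_\lambda^2,b_\lambda^2)$ (which follows from $(1-\max)(1+\max)=\min^2$) then gives $\sum_\lambda(1-\langle\eta_\lambda,\xi_\lambda\rangle)<\infty$, so $\eta:=\bigotimes_\lambda\eta_\lambda$ converges in $H$, establishing (i) and (ii).

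For (iii), using $R=R^*$, (i), and $R_\lambda\eta_\lambda=\eta_\lambda$, for $X=\rho_\lambda(Y)$ we find $\omega_{R\eta}(X)=\omega_\eta(\rho_\lambda(R_\lambda Y R_\lambda))=\omega_{\eta_\lambda}(R_\lambda Y R_\lambda)=\omega_{\eta_\lambda}(Y)=\omega_\eta(X)$, and the analogous multiplicative calculation on finite products $\prod_{\lambda\in F}\rho_\lambda(X_\lambda)$ gives $\omega_{R\eta}=\omega_\eta$ on a weakly dense $*$-subalgebra. By normality $\omega_{R\eta}=\omega_\eta$ on $B(H)$, so $R\eta=c\eta$ with $|c|=1$, and $R^2=1$ forces $c=\pm 1$. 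The main obstacle is the second stage: the existence of $R$ as a bounded operator on $H$ must be converted into a convergence condition on the local data $\{R_\lambda,\xi_\lambda\}$, accomplished by exhibiting $R\xi$ concretely as a factorizable vector and appealing to Theorem~\ref{ArakiWoods}; a priori no choice of signs $\epsilon_\lambda$ is guaranteed to produce a convergent product vector without this input.
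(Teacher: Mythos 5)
Your proof is correct and follows essentially the same route as the paper's: implement $\Ad R$ locally by self-adjoint unitaries, observe that $R\xi$ is a factorizable vector, invoke the Araki--Woods product-vector decomposition to extract the convergence condition, and then choose eigenvectors of the (suitably re-signed) $R_\lambda$ whose tensor product converges. The only cosmetic differences are that the paper takes $\eta_\lambda=Q_\lambda\xi_\lambda$ rather than a normalized eigenvector and proves (iii) via strong convergence of the net of rank-one projections instead of equality of normal states; note also that the precise fact you use---that a factorizable vector is, up to a phase, a convergent product vector---comes from the proof of Araki--Woods' Lemma 3.2 (as the paper cites) rather than from the statement of Theorem~\ref{ArakiWoods} itself.
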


\begin{proof} Since the restriction of $\Ad R$ to $\cM_\lambda$ is an automorphism of period two and $\cM_\lambda$ is a type I factor, 
there exist self-adjoint unitaries $R_\lambda\in B(H_\lambda)$ such that 
$$\rho_\lambda(R_\lambda)X\rho_{\lambda}(R_\lambda^*)=RXR^*,\quad \forall X\in \cM_\lambda.$$ 
By replacing $R_\lambda$ with $-R_\lambda$ if necessary, we may assume $\inpr{R_\lambda \xi_\lambda}{\xi_\lambda}\geq 0$ for 
$\forall \lambda\in \Lambda$.  
Let $\xi=\bigotimes_{\lambda\in \Lambda}\xi_\lambda$, and let $p_\lambda\in \cM_\lambda$ be the minimal projection satisfying 
$p_\lambda \xi=\xi$. 
Then $q_\lambda=Rp_\lambda R^*$ is a minimal projection of $\cM_\lambda$ satisfying 
$q_\lambda R\xi=R\xi$, and so $R\xi$ is a factorizable vector. 
The proof of \cite[Lemma 3.2]{AW} shows that there exist a complex number $c$ of modulus 1 and  normalized vectors $\zeta_\lambda\in H_\lambda$ 
such that $R\xi=c\bigotimes_{\lambda\in \Lambda} \zeta_\lambda$ and $\inpr{\zeta_\lambda}{\xi_\lambda}\geq 0$ for $\forall \lambda\in \Lambda$. 
Since $q_\lambda=\rho_\lambda(R_\lambda) p_\lambda \rho_\lambda(R_\lambda^*)$, the normalized vector $\zeta_\lambda$ is a scalar multiple of 
$R_\lambda\xi_\lambda$. 
Let 
$$\Lambda_0=\{\lambda\in\Lambda;\; \inpr{\zeta_\lambda}{\xi_\lambda}=0 \},$$
and $\Lambda_1=\Lambda\setminus \Lambda_0$. 
Then $\Lambda_0$ is a finite set. 
The two conditions $\inpr{R_\lambda \xi_\lambda}{\xi_\lambda}\geq 0$ and $\inpr{\zeta_\lambda}{\xi_\lambda}\geq 0$ 
imply that for any $\lambda\in \Lambda_1$, we actually have 
$R_\lambda\xi_\lambda=\zeta_\lambda$. 
Let $Q_\lambda$ be the spectral projection of $R_\lambda$ corresponding to eigenvalue 1. 
Then since $R_\lambda=2Q_\lambda-1$, we have $\inpr{\zeta_\lambda}{\xi_\lambda}=2\inpr{Q_\lambda\xi_\lambda}{\xi_\lambda}-1$. 

For $\lambda\in \Lambda_0$, by replacing $R_\lambda$ with $-R_\lambda$ if necessary, we can find a normalized vector $\eta_\lambda\in H_\lambda$ 
satisfying $R_\lambda\eta_\lambda=\eta_\lambda$. 
For $\lambda\in \Lambda_1$, we set $\eta_\lambda=Q_\lambda\xi_\lambda$. 
Then 
$$\|\eta_\lambda\|^2=\inpr{\eta_\lambda}{\xi_\lambda}=\frac{1+\inpr{\zeta_\lambda}{\xi_\lambda}}{2}.$$
This shows that the product vector $\bigotimes_{\lambda\in \Lambda}\eta_\lambda\in H$ exists and $R_\lambda \eta_\lambda=\eta_\lambda$. 

It only remains to show (iii). 
Let $e_\lambda\in B(H_\lambda)$ be the projection onto $\C\eta_\lambda$. 
Then the proof of \cite[Lemma 3.2]{AW} shows that the net $\{\prod_{\lambda\in F}\rho_{\lambda}(e_\lambda)\}_{F\in \cF(\Lambda)}$ 
strongly converges to the projection $e\in B(H)$ onto $\C\eta$. 
Since $ReR^*=e$ and $R$ is a self-adjoint unitary, we get either $R\eta=\eta$ or $R\eta=-\eta$. 
\end{proof}

\subsection{Quasi-free representations of the CAR algebra}
We recall some of the well-known results about quasi-free representations of the algebra of canonical anticommutation relations 
(called as CAR algebra). 

Let $K$ be a complex Hilbert space.  
We denote by $\fA(K)$ the CAR algebra over $K$, which is the universal $C^*$-algebra generated by 
$\{a(x); x \in K\}$, determined by the linear map $x \mapsto a(x)$ satisfying the CAR relations:  
\begin{eqnarray*}
a(x)a(y) +a(y)a(x)&  = & 0, \\
a(x)a(y)^* +a(y)^*a(x) & = & \langle x,y\rangle1, 
\end{eqnarray*} for all $x,y \in K$. 
Since $\fA(K)$ is known to be simple, any set of operators satisfying the CAR relations generates a $C^*$-algebra 
canonically isomorphic to $\fA(K)$. 

For any state $\varphi$ of $\fA(K)$, there exists a unique positive contraction $A\in B(K)$ satisfying 
$\varphi(a(f)a(g)^*)=\inpr{Af}{g}$ for $\forall f,g\in K$. 
We call $A$ the covariance operator (or 2-point function) of $\varphi$. 

A \textit{quasi-free state} $\omega_A$ on $\fA(K)$, associated with a positive contraction $A \in B(K)$, 
is the state whose $(n,m)$-point functions are determined by its 2-point function as  
$$\omega_A(a(x_n) \cdots a(x_1)a(y_1)^* \cdots a(y_m)^* ) = \delta_{n,m} \det (\langle Ax_i, y_j\rangle) ,$$ 
where $\det(\cdot)$ denotes the determinant of a matrix.  
Given a positive contraction, it is a fact that such a state always exists and is uniquely determined by the above relation. 
This is usually called as the gauge invariant quasi-free state (or generalized free state). 
Since we will be dealing only with gauge invariant quasi-free states, we just call them as quasi-free states. 

We denote by $(H_A, \pi_A, \Omega_A)$ the GNS triple associated with 
a quasi-free state $\omega_A$ on $\fA(K)$, and set $\cM_A:=\pi_A(\fA(K))''$. 
We call $\pi_A$ the \textit{quasi-free representation} associated with $A$.

Recall that two representations  $\pi_1$ and $\pi_2$ of a $C^*$-algebra $\fA$ are said to be quasi-equivalent 
if there is a $*$-isomorphism of von Neumann algebras 
$$\theta:\pi_1(\fA)^{\prime\prime} \longmapsto \pi_2(\fA)^{\prime \prime}$$ satisfying 
$\theta(\pi_1(X))=\pi_2(X)$ for all $X \in \fA$. 
Two states are said to be quasi-equivalent if their GNS representations are quasi-equivalent. 

We now summarize standard results on quasi-free states. 
For the proofs, the reader is referred to \cite[Chapter13]{Arv}, \cite[Section II]{Po1}, and references therein. 

\begin{theorem}\label{qfstate} Let $K$ be a Hilbert space, let $P\in B(K)$ be a projection, and let $A,B\in B(K)$ be positive contractions.  
\begin{itemize}
\item [(i)] Every quasi-free state $\omega_A$ of the CAR algebra $\fA(K)$ is a factor state, that is, 
the von Neumann algebra $\cM_A$ is a factor. 
\item [(ii)] The restriction of the GNS representation $\pi_A$ to $\fA(PK)$ is quasi-equivalent to the GNS representation 
$\pi_{PAP}$ of $\fA(PK)$, where $PAP$ is regarded as a positive contraction of $PK$. 
\item [(iii)] The quasi-free state $\omega_A$ is of type I if and only if $\tr(A -A^2) < \infty.$  
\item [(iv)] The two quasi-free states $\omega_A$ and $\omega_B$ are quasi-equivalent if and only if both operators 
$A^{1/2} - B^{1/2}$ and $(1-A)^{1/2} -(1-B)^{1/2}$ are Hilbert-Schmidt. 
\item [(v)] The two quasi-free states $\omega_A$ and $\omega_P$ are quasi-equivalent if and only if  
$$\tr\big(P(1-A)P+(1-P)A(1-P)\big)<\infty.$$   
\end{itemize}
\end{theorem}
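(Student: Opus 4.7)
The plan is to work with an explicit Araki--Wyss / Powers--St\o rmer realization of the GNS triple $(H_A,\pi_A,\Omega_A)$ on an antisymmetric Fock space. One takes $H_A=\cF_-(K\oplus \bar K)$ with vacuum $\Omega_A$ and sets
$$\pi_A(a(f))=c(A^{1/2}f\oplus 0)+c^*\bigl(0\oplus \overline{(1-A)^{1/2}f}\bigr),$$
where $c,c^*$ are the usual annihilation/creation operators on Fock space (a Klein transform on the second summand keeps the anticommutations right). Direct computation verifies the CAR relations and that the $n$-point functions at $\Omega_A$ match $\omega_A$. Each of the five claims then becomes a standard but nontrivial fact about Bogoliubov transformations on Fock space.

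For (i), the factor property follows from Araki duality: $\pi_A(\fA(K))'$ is generated by a twisted copy of the CAR algebra acting on the $\bar K$ summand with covariance $1-A$. A central element must commute with both copies, and testing on the cyclic vacuum and on vectors $\pi_A(a(f_1)^*\cdots a(f_n)^*)\Omega_A$ forces it to be a scalar. For (ii), a direct $n$-point computation shows $\omega_A|_{\fA(PK)}=\omega_{PAP}$, so the cyclic subrepresentation of $\pi_A|_{\fA(PK)}$ generated by $\Omega_A$ is unitarily equivalent to $\pi_{PAP}$; this promotes to quasi-equivalence of the full $\pi_A|_{\fA(PK)}$ with $\pi_{PAP}$ because $\pi_A(\fA(PK))''$ is a factor (by the same Araki-duality argument as in (i)), and any cyclic subrepresentation of a factor representation is quasi-equivalent to it. For (iii), spectral theory of $A$ decomposes $\omega_A$ as an infinite tensor product of two-dimensional gauge-invariant states with eigenvalues $\{\lambda_i,1-\lambda_i\}$; such an ITPS state is of type I in the Araki--Woods sense exactly when $\sum_i\lambda_i(1-\lambda_i)<\infty$, i.e.\ $\tr(A-A^2)<\infty$.

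The main technical obstacle is (iv), the Powers--St\o rmer criterion. The plan is to observe that a quasi-equivalence $\omega_A\sim \omega_B$, lifted through the Fock realizations above, produces an intertwiner between the two Fock representations, which by Shale--Stinespring is unitarily implementable precisely when the difference of the two Bogoliubov transformations
$$T_A:K\ni f\mapsto \bigl(A^{1/2}f,\overline{(1-A)^{1/2}f}\bigr)\in K\oplus \bar K$$
and the analogous $T_B$ is Hilbert--Schmidt; this unpacks exactly into the two conditions that $A^{1/2}-B^{1/2}$ and $(1-A)^{1/2}-(1-B)^{1/2}$ be Hilbert--Schmidt. The converse, assuming both conditions, is the easier direction: one constructs an explicit intertwining unitary on Fock space as a composition of quasi-free operations. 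Finally, (v) is the specialization $B=P$ of (iv): since $P^{1/2}=P$ and $(1-P)^{1/2}=1-P$, the two Hilbert--Schmidt conditions become $\tr((A^{1/2}-P)^2)<\infty$ and $\tr(((1-A)^{1/2}-(1-P))^2)<\infty$, and a direct manipulation using the inequality $(1-A^{1/2})^2\leq 1-A\leq 2(1-A^{1/2})$ valid for $0\leq A\leq 1$ (and its analogue with $A\leftrightarrow 1-A$) shows these two conditions together are equivalent to the single trace condition $\tr(P(1-A)P+(1-P)A(1-P))<\infty$.
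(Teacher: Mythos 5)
The paper offers no proof of this theorem: it is stated explicitly as a summary of standard results, with the reader referred to Arveson's book, Powers' paper, and the references therein (Powers--St{\o}rmer, Araki). So there is no internal argument to compare with; your proposal is a sketch of how the classical results are actually proved, and its architecture --- the Araki--Wyss purification on $\cF_-(K\oplus\bar K)$ (with $\cF_-$ the antisymmetric Fock space), duality for the commutant, the reduction of (v) to the Powers--St{\o}rmer criterion (iv) by a corner-by-corner Hilbert--Schmidt computation, and the spectral projection $\chi_{[1/2,1]}(A)$ linking (iii) to (v) --- is the standard one. Your treatment of (v) in particular is complete and correct: the inequalities $(1-A^{1/2})^2\le 1-A\le 2(1-A^{1/2})$ that you quote do exactly the work needed to pass between the two Hilbert--Schmidt conditions and the single trace condition.

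Two steps, however, do not close as written. In (i), the final sentence is circular: an operator commuting with $\pi_A(\fA(K))$ and with the dual copy in the commutant is, by definition, a central element, and knowing its matrix elements against a cyclic family of vectors does not force it to be scalar unless you already know that the center acts as scalars on $\Omega_A$. What actually finishes the argument is that (after splitting off the Fock and anti-Fock tensor factors coming from $\ker(1-A)$ and $\ker A$) the fields of the two copies, together with the Klein operator, generate $c(x)$ and $c^*(y)$ for $x,y$ dense in $K\oplus\bar K$, hence all of $B(\cF_-(K\oplus\bar K))$; the same input is what makes $\pi_A(\fA(PK))''$ a factor in your step (ii). More seriously, in (iv) the hard direction (quasi-equivalence implies the two Hilbert--Schmidt conditions) is not a consequence of Shale--Stinespring: quasi-equivalence of $\omega_A$ and $\omega_B$ produces an isomorphism $\cM_A\to\cM_B$, not a unitary on the doubled Fock space intertwining the two Bogoliubov images, and upgrading the former to the latter is essentially the content of the Powers--St{\o}rmer theorem itself; their proof runs through the inequality $\|A^{1/2}-B^{1/2}\|_{\HS}^2\le\|A-B\|_1$ and an estimate of transition probabilities, not through implementability of a Bogoliubov transformation. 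Finally, in (iii) the decomposition into an infinite tensor product of $2\times 2$ states requires $A$ to admit an eigenbasis; this is automatic when $\tr(A-A^2)<\infty$, but for the converse direction you should instead argue via quasi-equivalence to a pure state $\omega_P$ and part (v).
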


We frequently use the following criterion, which is more or less (v) above.   

\begin{lemma}\label{typeI} Let $A,B\in B(K)$ be positive contractions. 
We assume that $\omega_B$ is a type I state. 
Then the two quasi-free states $\omega_A$ and $\omega_B$ are quasi-equivalent if and only if 
$$\tr\big(B(1-A)B+(1-B)A(1-B)\big)<\infty.$$  
\end{lemma}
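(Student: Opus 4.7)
The plan is to reduce the assertion to part (v) of Theorem \ref{qfstate} by approximating $B$ by a projection $P$ for which $\omega_P$ and $\omega_B$ are quasi-equivalent. Once such $P$ is found with $B-P$ Hilbert-Schmidt, the trace condition in the lemma for the pair $(A,B)$ will be equivalent to the trace condition of Theorem \ref{qfstate}(v) for $(A,P)$, and transitivity of quasi-equivalence finishes the argument.

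To construct $P$, I would use Theorem \ref{qfstate}(iii) to rephrase the type I hypothesis on $\omega_B$ as $\tr(B(1-B))<\infty$. Since $B$ is a positive contraction, this forces the essential spectrum of $B$ into $\{0,1\}$, and the remaining spectrum consists of eigenvalues $\lambda_n\in(0,1)$ of finite total multiplicity with $\sum_n\lambda_n(1-\lambda_n)<\infty$. I take $P$ to be the spectral projection of $B$ for $[1/2,1]$; the elementary bounds $\lambda\leq 2\lambda(1-\lambda)$ on $[0,1/2]$ and $(1-\sqrt{\lambda})^2\leq 1-\lambda\leq 2\lambda(1-\lambda)$ on $[1/2,1]$, applied eigenspace by eigenspace, show that $B^{1/2}-P$ is Hilbert-Schmidt. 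The symmetric argument applied to $1-B$ (with projection $1-P$) gives that $(1-B)^{1/2}-(1-P)$ is Hilbert-Schmidt, so Theorem \ref{qfstate}(iv) yields $\omega_B\sim\omega_P$. Moreover the identity
\[
B-P=B^{1/2}(B^{1/2}-P)+(B^{1/2}-P)P
\]
shows that $B-P$ is itself Hilbert-Schmidt.

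To finish, I write $\tr(B(1-A)B)=\|(1-A)^{1/2}B\|_\HS^2$ and note that $(1-A)^{1/2}B-(1-A)^{1/2}P=(1-A)^{1/2}(B-P)$ is Hilbert-Schmidt (a bounded operator times a Hilbert-Schmidt one), so $\tr(B(1-A)B)<\infty$ iff $\tr(P(1-A)P)<\infty$. The analogous manipulation with $(1-B)-(1-P)=P-B$ gives the corresponding equivalence for the second summand. Thus the condition of the lemma is equivalent to the condition of Theorem \ref{qfstate}(v) for the pair $(A,P)$, hence to $\omega_A\sim\omega_P$, and combined with $\omega_B\sim\omega_P$ this is exactly $\omega_A\sim\omega_B$. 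The only step requiring any real input is the spectral estimate producing $P$ in the middle paragraph, where the type I hypothesis enters in an essential way; although elementary, this is the main obstacle to be verified.
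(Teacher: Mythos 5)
Your proposal is correct and follows essentially the same route as the paper: both take $P$ to be the spectral projection of $B$ for $[1/2,1]$, use Theorem \ref{qfstate},(iii),(iv) to get $\omega_B\sim\omega_P$ with $B-P$ small (the paper notes $P-B$ is even trace class; Hilbert--Schmidt suffices, as you use), and then transfer the trace condition of Theorem \ref{qfstate},(v) from $(A,P)$ to $(A,B)$ by a perturbation of Hilbert--Schmidt norms. Your write-up merely makes explicit the spectral estimates and the identity $\tr(B(1-A)B)=\|(1-A)^{1/2}B\|_\HS^2$ that the paper leaves implicit.
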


\begin{proof} Let $P$ be the spectral projection of $B$ corresponding to the interval $[1/2,1]$. 
Since $\omega_B$ is a type I state, Theorem \ref{qfstate},(iii),(iv) imply that 
$P-B$ is a trace class operator, and $\omega_P$ and $\omega_B$ are quasi-equivalent. 
Thus $\omega_A$ and $\omega_B$ are quasi-equivalent if and only if $\omega_A$ and $\omega_P$ are quasi-equivalent, 
which is further equivalent to 
$$\tr\big(P(1-A)P+(1-P)A(1-P)\big)<\infty,$$  
thanks to Theorem \ref{qfstate},(v).  
Now the statement follows from the fact that $P-B$ is a trace class operator.
\end{proof}

Let $\gamma$ be the period two automorphism of $\fA(K)$ determined by 
$\gamma(a(f))=-a(f)$ for $\forall f\in K$. 
Since any quasi-free state $\omega_A$ is invariant under $\gamma$, the automorphism $\gamma$ extends to a period 
two automorphism $\overline{\gamma}$ of the von Neumann algebra $\cM_A$. 
For a $\Z/2\Z$-grading of $\fA(K)$ (respectively $\cM_A$), we always refer to the one coming from $\gamma$ 
(respectively $\overline{\gamma}$). 
When there is no possibility of confusion, we abuse the notation and use the same symbol $\gamma$ for $\overline{\gamma}$. 

Let $\omega_A$ be a type I state. 
Then since every automorphism of a type I factor is inner, there exists a self-adjoint unitary $R^A\in \pi_A(\fA)''$ 
satisfying $\Ad R^A(X)=\gamma(X)$ for all $X\in \cM_A$. 
The operator $R^A$ is uniquely determined up to a multiple of $-1$. 
In the same way, for every closed subspace $L\subset K$ such that the restriction of $\pi_A$ to $\fA(L)$ is of type I, 
there exists a self-adjoint unitary $R^A_L\in \pi_A(\fA(L))''$ satisfying $\Ad R^A_L(X)=\gamma(X)$ 
for all $ X\in \pi_A(\fA(L))''$. 
For each $L$, we fix such $R^A_L$, which itself is an even operator with respect to $\gamma$. 
When $L_1$ and $L_2$ are mutually orthogonal closed subspaces of $K$ satisfying the above condition, then 
we have 
$$R^A_{L_1\oplus L_2}=\epsilon_{L_1,L_2}R^A_{L_1}R^A_{L_2}=\epsilon_{L_1,L_2}R^A_{L_2}R^A_{L_1},$$
where $\epsilon_{L_1,L_2}\in \{1,-1\}$. 

When $\omega_A$ is of type I, the family of operators $\{i\pi_A(a(f))R^A;\;f\in K \}$ also satisfies the CAR relation. 
We denote by $\pi^t_A$ the representation of $\fA(K)$ determined by $\pi^t_A(a(f))=i\pi_A(a(f))R^A$ for all $f\in K$, 
and call it the twisted representation associated with $\omega_A$. 
Note that the two representations $\pi_A$ and $\pi_A^t$ coincides on the even part of $\fA(K)$. 

\begin{lemma}\label{commutant} Let $\omega_A$ be a type I quasi-free state of $\fA(K)$. 
\begin{itemize}
\item [(i)] For any subspace $L\subset K$,  
$$\cM_A\cap \pi_A(\fA(L))'=\pi_A^t(\fA(L^\perp))''.$$ 
\item [(ii)] Let $U=\frac{1}{\sqrt{2}}(1-iR^A)\in \cM_A$. 
Then $$U\pi_A(X)U^*=\pi_A^t(X)$$  holds for all $X\in \fA(K)$. 
\end{itemize}
\end{lemma}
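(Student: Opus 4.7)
The plan is to first establish (ii) by a direct computation and then to deduce (i) from (ii), the CAR relations, and a reduction to the pure Fock case. For (ii), I would expand $U\pi_A(a(f))U^* = \tfrac12(1-iR^A)\pi_A(a(f))(1+iR^A)$ and use that $(R^A)^2 = 1$ together with the anticommutation $R^A\pi_A(a(f)) = -\pi_A(a(f))R^A$ (which holds because $R^A$ implements the grading $\gamma$); the four terms collapse to $i\pi_A(a(f))R^A = \pi_A^t(a(f))$. Since both $X \mapsto U\pi_A(X)U^*$ and $\pi_A^t$ are $*$-representations of $\fA(K)$ agreeing on the generators $\{a(f): f\in K\}$, they agree on all of $\fA(K)$.

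For the easy inclusion $\pi_A^t(\fA(L^\perp))'' \subseteq \cM_A \cap \pi_A(\fA(L))'$ in (i), containment in $\cM_A$ is immediate from (ii) together with $U \in \cM_A$. Commutativity of $\pi_A^t(\fA(L^\perp))$ with $\pi_A(\fA(L))$ will follow from the CAR relations $\{a(f), a(g)\} = 0$ and $\{a(f)^*, a(g)\} = \langle f, g\rangle = 0$ (valid for $f \in L^\perp$, $g \in L$) combined with the anticommutation of $R^A$ with $\pi_A(a(g))$ and $\pi_A(a(g)^*)$: these give $[\pi_A^t(a(f)), \pi_A(a(g))] = [\pi_A^t(a(f)^*), \pi_A(a(g))] = 0$ by a short direct computation.

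For the reverse inclusion, the plan is to reduce to the pure Fock case via quasi-equivalence. Since $\omega_A$ is of type I, a short trace estimate (using the elementary bound $|1-A^{1/2}|^2 \leq \text{const}\cdot (A-A^2)$ on $\{A \geq 1/2\}$ and the analogue on $\{A < 1/2\}$) shows that the spectral projection $Q = \chi_{[1/2, 1]}(A)$ satisfies the criterion of Theorem \ref{qfstate}(iv); hence $\omega_A$ is quasi-equivalent to $\omega_Q$, and this latter state is pure, so $\cM_Q = B(H_Q)$. Let $\theta:\cM_A \to \cM_Q$ be the induced $*$-isomorphism with $\theta\circ\pi_A = \pi_Q$. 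Under $\theta$, the relative commutant $\cM_A \cap \pi_A(\fA(L))'$ maps onto the full commutant $\pi_Q(\fA(L))'$ in $B(H_Q)$. Because $\theta$ intertwines the gradings, $\theta(R^A)$ implements $\gamma$ on the factor $\cM_Q$, forcing $\theta(R^A) = \pm R^Q$; in either sign a direct check (using that replacing the twisting self-adjoint unitary by its negative yields the same von Neumann algebra of twisted operators) gives $\theta(\pi_A^t(\fA(L^\perp))'') = \pi_Q^t(\fA(L^\perp))''$. Thus (i) reduces to the classical Araki duality for the Fock representation, namely $\pi_Q(\fA(L))' = \pi_Q^t(\fA(L^\perp))''$.

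The main obstacle will be this invocation of Araki's duality. If a self-contained proof is preferred, I would realize $H_Q$ concretely as $\bigwedge(QK) \otimes \bigwedge(\overline{(1-Q)K})$, write the creation/annihilation operators and the twist $R^Q = (-1)^N$ (parity) explicitly, and verify by hand that the operators $i\pi_Q(a(f))R^Q$ for $f \in L^\perp$ (together with their adjoints and products) exhaust the commutant of $\pi_Q(\fA(L))$ in $B(H_Q)$. The sign bookkeeping coming from the $\Z/2\Z$-grading, and tracking how a general $L$ interacts with the decomposition $K = QK \oplus (1-Q)K$ (which is in general not $L$-invariant), will be the main sources of technical difficulty.
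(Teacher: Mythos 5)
Your proposal is correct and follows essentially the same route as the paper: (ii) by the direct computation with $U=\tfrac{1}{\sqrt2}(1-iR^A)$, and (i) by passing to the spectral projection $Q=\chi_{[1/2,1]}(A)$ via quasi-equivalence and then invoking twisted duality in the pure Fock case. The step you flag as the main obstacle is exactly where the paper cites Foit's twisted duality theorem \cite[Theorem 2.4]{F}, so no self-contained verification is needed beyond that reference.
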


\begin{proof} (i) Let $Q$ be the spectral projection of $A$ corresponding to the interval $[1/2,1]$. 
Then $\pi_A$ and $\pi_Q$ are quasi-equivalent, and we may assume that $A$ is a projection for the proof by 
replacing $A$ with $Q$ if necessary. 
Now the statement follows from the twisted duality theorem \cite[Theorem 2.4]{F}. 

(ii) This follows from a direct computation (or \cite[Proposition 2.3]{F}).  
\end{proof}

As in \cite{Po1}, we also need to use a few facts about general factor states of $\fA(K)$. 

\begin{lemma}\label{covariance operator} Let $A$ be the covariance operator of a state $\varphi$ of $\fA(K)$. 
Then, 
\begin{itemize}
\item [(i)] If $A$ is a projection, then $\varphi$ is the pure state $\omega_A$. 
\item [(ii)] If $\varphi$ is quasi-equivalent to a quasi-free state $\omega_B$, then $A-B$ is compact. 
\end{itemize}
\end{lemma}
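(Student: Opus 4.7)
For part (i), my plan is to derive vacuum conditions from the hypothesis that the covariance is a projection $P$ and then exploit the CAR relations directly. The identities $\|\pi_\varphi(a(f))\Omega_\varphi\|^2 = \varphi(a(f)^*a(f)) = \|f\|^2-\langle Pf,f\rangle$ and $\|\pi_\varphi(a(f)^*)\Omega_\varphi\|^2 = \langle Pf,f\rangle$ force $\pi_\varphi(a(f))\Omega_\varphi=0$ for $f\in PK$ and $\pi_\varphi(a(f)^*)\Omega_\varphi=0$ for $f\in (1-P)K$; the same conditions hold for $\omega_P$. Decomposing each generator as $a(f)=a(Pf)+a((1-P)f)$ separates ``annihilators'' $\{a(Pf),\,a((1-P)g)^*\}$ (killing $\Omega_\varphi$) from ``creators'' $\{a(Pf)^*,\,a((1-P)g)\}$. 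Iteratively anticommuting annihilators past creators in an arbitrary monomial produces an algebraic Wick expansion whose contractions are scalars of the form $\langle P\cdot,\cdot\rangle$ or $\langle (1-P)\cdot,\cdot\rangle$, times a normal-ordered residual. Any residual containing an annihilator evaluates to zero on the vacuum; a pure-creator residual $L_1\cdots L_n$ with $n\geq 1$ also gives zero because $\langle L_1\cdots L_n\Omega_\varphi,\Omega_\varphi\rangle=\langle L_2\cdots L_n\Omega_\varphi,L_1^*\Omega_\varphi\rangle=0$, the adjoint $L_1^*$ being in every case an annihilator. Only fully contracted terms survive, and their values depend only on $P$ and the CAR relations; hence $\varphi(X)=\omega_P(X)$ for every monomial, so $\varphi=\omega_P$.

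For part (ii), my strategy is to turn quasi-equivalence into a normal extension and combine it with a cluster estimate. The $*$-isomorphism $\theta:\pi_\varphi(\fA(K))''\to\pi_B(\fA(K))''$ implementing quasi-equivalence lifts $\varphi$ to a $\sigma$-weakly continuous state $\tilde\varphi$ on $\pi_B(\fA(K))''$ satisfying $\tilde\varphi(\pi_B(X))=\varphi(X)$. The central claim is that, for every orthonormal sequence $\{e_n\}\subset K$, the bounded sequence $\pi_B(a(e_n)a(e_n)^*)-\langle Be_n,e_n\rangle\cdot 1_{H_B}$ tends to $0$ in the weak operator topology (hence $\sigma$-weakly, since it is norm-bounded). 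To prove this, I would compute matrix coefficients $\omega_B(Y^*a(e_n)a(e_n)^*X)$ for arbitrary $X,Y\in\fA(K)$ by Wick's theorem applied to the quasi-free state $\omega_B$: contracting the adjacent pair $a(e_n)$ with $a(e_n)^*$ contributes $\langle Be_n,e_n\rangle\,\omega_B(Y^*X)$, while any contraction pairing $a(e_n)$ or $a(e_n)^*$ with an operator in $X$ or $Y$ introduces a factor of the form $\langle Be_n,\cdot\rangle$ or $\langle\cdot,Be_n\rangle$ against a fixed vector, which vanishes as $e_n\to 0$ weakly. Applying $\tilde\varphi$ then yields $\langle Ae_n,e_n\rangle-\langle Be_n,e_n\rangle\to 0$ for every orthonormal $\{e_n\}$; since $A-B$ is self-adjoint, the standard characterization of compactness in terms of diagonal entries along orthonormal sequences forces $A-B$ to be compact.

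The hardest step is the cluster estimate in part (ii): tracking the Wick expansion carefully enough to isolate the leading $\langle Be_n,e_n\rangle$ contribution from the vanishing sub-leading ones, and verifying that pointwise convergence on a total set plus uniform boundedness suffices for $\sigma$-weak convergence in $\pi_B(\fA(K))''$. Once this estimate is in place, normality of $\tilde\varphi$ finishes the argument mechanically, and part (i) is essentially bookkeeping around the vacuum-killing identities.
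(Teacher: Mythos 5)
Both parts of your argument are sound. Note that the paper gives no proof of this lemma at all --- it cites \cite{Ara} (Lemma 4.3) for (i) and \cite{Arv} (Theorem 13.1.3) for (ii) --- so what you have written is a self-contained reconstruction of the standard arguments behind those citations rather than a genuinely different route. For (i): the vacuum identities $\pi_\varphi(a(f))\Omega_\varphi=0$ for $f\in PK$ and $\pi_\varphi(a(g)^*)\Omega_\varphi=0$ for $g\in(1-P)K$ do follow from your norm computations; moreover the cross-anticommutators between your two families of creators vanish (e.g.\ $\{a(Pf),a((1-P)g)^*\}=\inpr{Pf}{(1-P)g}\,1=0$), so every contraction arising in the normal-ordering is a scalar determined by $P$, the pure-creator residuals die against the vacuum exactly as you say, and since $\omega_P$ satisfies the same vacuum conditions you get $\varphi=\omega_P$. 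The one thing you do not address is the word ``pure'' in the statement: you should either invoke irreducibility of the GNS representation of $\omega_P$ for a projection $P$ (standard, and what \cite{Ara} proves) or observe that the paper only ever uses the identification $\varphi=\omega_A$. For (ii), the cluster estimate is correct and is the crux: for fixed monomials $X,Y$ the Wick expansion of $\omega_B(Y^*a(e_n)a(e_n)^*X)$ has finitely many terms, and every term other than the adjacent contraction carries a factor of the form $\inpr{Be_n}{\cdot}$, $\inpr{(1-B)e_n}{\cdot}$ or $\inpr{e_n}{\cdot}$ against a fixed vector, all of which vanish because an orthonormal sequence tends to $0$ weakly; uniform boundedness of $a(e_n)a(e_n)^*$ upgrades WOT convergence on the total set $\pi_B(\fA(K))\Omega_B$ to $\sigma$-weak convergence, normality of $\tilde\varphi$ then yields $\inpr{(A-B)e_n}{e_n}\to0$, and the Weyl-sequence characterization of compactness for the self-adjoint operator $A-B$ closes the argument.
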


\begin{proof} (i) See, for example, \cite[Lemma 4.3]{Ara}. 

(ii) The statement follows from \cite[Theorem 13.1.3]{Arv}. 
\end{proof}

\subsection{Toeplitz CAR flows}
Let $V$ be an isometry of a Hilbert space $K$. 
Then we have an endomorphism $\rho$ of $\fA(K)$ determined by $\rho(a(f))=a(Vf)$ for all $ f\in K$. 
For a positive contraction $A$, the composition $\pi_A\circ \rho$ gives a representation of $\fA(K)$, 
which is quasi-equivalent to $\pi_{V^*AV}$ thanks to Theorem \ref{qfstate},(ii). 
Thus if both $A^{1/2}-(V^*AV)^{1/2}$ and $(1-A)^{1/2}-(1-V^*AV)^{1/2}$ are Hilbert-Schmidt operators, 
then $\rho$ extends to an endomorphism of the von Neumann algebra $\cM_A$. 
In particular, if $A$ satisfies $\tr(A-A^2)<\infty$ and $\{V_t\}_{t\geq 0}$ is a strongly continuous semigroup of isometries on 
$K$ satisfying the above condition for $V_t$ in place of $V$, then we get an $E_0$-semigroup. 

In what follows, we assume $K=L^2((0,\infty),\C^N)$, and that $\{S_t\}_{t\geq 0}$ is the shift semigroup 
$$S_tf(x)=\left\{
\begin{array}{ll}
0 , &\quad 0<x\leq t, \\
f(x-t) , &\quad t<x.
\end{array}
\right.
$$ 
In his attempt to clarify Powers' construction \cite{Po1} of the first example of a type III $E_0$-semigroup, 
Arveson \cite[Section 13.3]{Arv} determined the most general form of a positive contraction $A\in B(K)$ satisfying $\tr(A-A^2)<\infty$ and 
$S_t^*AS_t=A$ for all $t$, which we state now. 

We regard $K$ as a closed subspace of $\tK:=L^2(\R,\C^N)$, and we denote by $P_+$ the projection from $\tK$ onto $K$. 
We often identify $B(K)$ with $P_{+}B(\tK)P_+$. 

We denote by $M_N(\C)$ the $N$ by $N$ matrix algebra. 
For $\Phi\in L^\infty(\R)\otimes M_N(\C)$, we define the corresponding Fourier multiplier $C_\Phi\in B(\tK)$ by 
$$\hat{(C_\Phi f)}(p)=\Phi(p)\hat{f}(p).$$ 
Then the Toeplitz operator $T_\Phi\in B(K)$ and the Hankel operator $H_\Phi\in B(K,K^\perp)$ with the symbol $\Phi$ are defined by 
$$T_\Phi f=P_+C_\Phi f,\quad f\in K,$$
$$H_\Phi f=(1_{\tK}-P_+)C_\Phi f,\quad f\in K.$$

\begin{theorem}[Arveson]\label{symbol} Let $K=L^2((0,\infty),\C^N)$. 
A positive contraction $A\in B(K)$ satisfies $\tr(A-A^2)<\infty$ and $S_t^*AS_t=A$ if and only if 
there exists a projection $\Phi\in L^\infty(\R)\otimes M_N(\C)$ satisfying the following two conditions: 
\begin{itemize}
\item [(i)] $A=T_\Phi$,
\item [(ii)] the Hankel operator $H_\Phi$ is Hilbert-Schmidt.  
\end{itemize}
\end{theorem}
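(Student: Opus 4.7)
The plan is to handle the two implications separately. For the ``if'' direction, assume $\Phi\in L^\infty(\R)\otimes M_N(\C)$ is a projection with $H_\Phi\in\HS$ and set $A=T_\Phi$. Since $0\le C_\Phi\le 1_{\tK}$, compression to $K$ gives $0\le A\le 1_K$. The relation $S_t^*T_\Phi S_t=T_\Phi$ follows because the bilateral shift $\tilde S_t$ extending $S_t$ is a Fourier multiplier, hence commutes with $C_\Phi$; a short inner product calculation using $S_t=\tilde S_t|_K$ and $\tilde S_t^*\tilde S_t=1_{\tK}$ then yields the invariance. Finally, writing $P_+=1_{\tK}-(1_{\tK}-P_+)$ in the middle of $P_+C_\Phi P_+C_\Phi P_+$ and using $C_\Phi^2=C_\Phi$ gives
$$A-A^2=P_+C_\Phi(1_{\tK}-P_+)C_\Phi P_+=H_\Phi^*H_\Phi,$$
so $\tr(A-A^2)=\|H_\Phi\|_{\HS}^2<\infty$.

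For the ``only if'' direction, I would first recover $\Phi$ from $A$ via the continuous vector-valued analogue of the Brown--Halmos theorem: every bounded operator $A$ on $K$ with $S_t^*AS_t=A$ for all $t\ge 0$ has the form $A=T_\Phi$ for a unique $\Phi\in L^\infty(\R)\otimes M_N(\C)$, and the hypotheses $A=A^*$, $0\le A\le 1_K$ force $\Phi=\Phi^*$ and $0\le \Phi\le 1$ almost everywhere. (This is classical; cf.\ Arveson \cite[Chapter 13]{Arv}.) The same identity as in the first paragraph now applies without assuming $\Phi$ is a projection and yields
$$A-A^2=T_{\Phi-\Phi^2}+H_\Phi^*H_\Phi,$$
in which both summands are positive. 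Therefore $\tr(A-A^2)<\infty$ gives $\|H_\Phi\|_{\HS}^2<\infty$, which is condition (ii), together with $\tr(T_{\Phi-\Phi^2})<\infty$.

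It remains to show the latter condition forces $\Phi^2=\Phi$. Set $\Psi=\Phi-\Phi^2\ge 0$ and suppose $T_\Psi$ is positive and trace class. Using $S_t^*T_\Psi S_t=T_\Psi$ and the cyclicity of the trace we get $\tr(T_\Psi)=\tr(T_\Psi S_tS_t^*)$, hence $\tr(T_\Psi(1_K-S_tS_t^*))=0$. Since $1_K-S_tS_t^*$ is the projection onto $L^2((0,t),\C^N)$, the positivity of $T_\Psi$ forces $T_\Psi$ to vanish on this subspace for every $t>0$; letting $t\to\infty$ gives $T_\Psi=0$. Now $T_\Psi=0$ with $\Psi\ge 0$ means $C_\Psi^{1/2}$ annihilates $K$; since $\bigcup_{t>0}\tilde S_{-t}K$ is dense in $\tK$ and $\tilde S_{-t}$ commutes with $C_\Psi$, we conclude $C_\Psi=0$, hence $\Psi=0$. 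Thus $\Phi^2=\Phi$.

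The main obstacle I anticipate is the continuous vector-valued Brown--Halmos representation used at the start of the ``only if'' direction; once it is in hand, the remaining steps---decomposing $A-A^2$ and using shift invariance to kill the positive Toeplitz summand---are routine manipulations.
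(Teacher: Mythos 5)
Your proof is correct. The paper itself gives no argument for this theorem --- it is quoted from Arveson \cite[Section 13.3]{Arv} --- and your proposal reconstructs essentially Arveson's own route: the identity $A-A^2=T_{\Phi-\Phi^2}+H_\Phi^*H_\Phi$ with both summands positive, the continuous Brown--Halmos representation to recover $\Phi$ (which you correctly flag as the one imported ingredient), and the observation that a positive trace-class Toeplitz operator must vanish because $S_t^*T_\Psi S_t=T_\Psi$ forces $\tr(T_\Psi P_{(0,t)})=0$ for all $t$. All the individual steps (the trace cyclicity, the identification of $1_K-S_tS_t^*$ with $P_{(0,t)}$, and the passage from $T_\Psi=0$ to $C_\Psi=0$ via density of $\bigcup_t\tilde S_{-t}K$) check out.
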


We call the symbol $\Phi$ satisfying the condition of Theorem \ref{symbol} as \textit{admissible}.

Let $\Phi\in L^\infty(\R)\otimes M_N(\C)$ be a projection. 
Arveson briefly mentioned in \cite[p.401]{Arv}, without giving a proof, that the condition (ii) of Theorem \ref{symbol} holds 
if and only if the Fourier transform $\hat{\Phi}(x)$ (in distribution sense) restricted to $\R\setminus \{0\}$ is locally 
square integrable and  
$$\sup_{\delta>0}\int_{|x|>\delta}|x|\tr(|\hat{\Phi}(x)|^2)dx<\infty. $$
He also observed that any admissible symbol is necessarily quasi-continuous, 
though he used only the fact that $H_\Phi$ is a compact operator.   
Now, first we figure out the most suitable function space for the admissible symbols without using the Fourier transform, 
and then we give a proof to the above characterization in terms of the Fourier transform. 
We will see similarity between admissible symbols and 
logarithm of spectral density functions of off-white noises discussed in \cite{T2}.  

We denote by $\T$  the unit circle in $\C$. 
Let $U$ be the unitary from $L^2(\R)$ onto $L^2(\T,\frac{dt}{2\pi})$ induced by 
the change of variables 
$$e^{it}=-\frac{p+i}{p-i},$$
(since the Fourier transform $\hat{f}(p)$ of $f\in K$ has analytic continuation to the \textit{lower} half-plane, 
we need a conformal transformation between the unit disk and the lower half-plane). 
Let $F$ be the unitaries associated with the Fourier transform. 
Then the Hankel operator $H_\Phi$ is transformed to the Hankel operator $H_\phi$ for $\T$ by $UF$, where 
$\Phi$ and $\phi$ are related by $\phi(e^{it})=\Phi(p)$ (see for example, \cite[Section 3]{T2}). 
Let $\phi_{ij}(p)$ be the matrix element of $\phi(p)$. 
Since $\phi(e^{it})$ is a projection,  the Hankel operator $H_\phi$ 
is Hilbert-Schmidt if and only if $H_{\phi_{ij}}$ and $H_{\overline{\phi_{ij}}}$ are Hilbert-Schmidt for all $i\leq j$. 

It is easy to see that the Hankel operators $H_h$ and $H_{\overline{h}}$ for $h\in L^\infty(\T)$ are Hilbert-Schmidt 
if and only if $h$ is in the Sobolev space $W^{1/2}_2(\T)$, that is 
$$\sum_{n\in \Z}|n| |\hat{h}(n)|^2<\infty,$$
where $\hat{h}(n)$ is the Fourier coefficient 
$$\hat{h}(n)=\frac{1}{2\pi}\int_0^{2\pi}h(e^{it})e^{-int}dt.$$
This is further equivalent to the condition that $h$ belongs to the Besov space $B_{2,2}^{1/2}(\T)$ 
because 
\begin{eqnarray*}
\int_0^{2\pi}\int_0^{2\pi}\frac{|h(e^{is})-h(e^{it})|^2}{|e^{is}-e^{it}|^2}dsdt
&=&\int_0^{2\pi}\int_0^{2\pi}\frac{|h(e^{i(s+t)})-h(e^{it})|^2}{|e^{is}-1|^2}dtds\\
 &=&2\pi\int_0^{2\pi}\sum_{n\in \Z}\frac{|(e^{ins}-1)\hat{h}(n)|^2}{|e^{is}-1|^2}ds\\
 &=&4\pi^2\sum_{n\in \Z}|n| |\hat{h}(n)|^2.
\end{eqnarray*}
As was done in \cite[Section 3]{T2}, we can translate this condition back into that for functions on $\R$. 
Now we see that the Hankel operator $H_\Phi$ with a projection $\Phi\in L^\infty(\R)\otimes M_N(\C)$ is Hilbert-Schmidt 
if and only if 
$$\int_{\R^2}\frac{\tr(|\Phi(p)-\Phi(q)|^2)}{|p-q|^2}dpdq<\infty. $$

Although the following lemma may be found in the literature of Besov spaces, for the reader's convenience, 
we give a proof to the first part.   
(i) and (ii) are essentially due to Tsirelson \cite[Proposition 3,6]{T2}. 

\begin{lemma}\label{Besov} Let $\psi(p)$ be a measurable function on $\R$ giving a tempered distribution, 
and let $0<\mu\leq 1$. 
Then the following two conditions are equivalent: 
\begin{itemize}
\item [$(1)$] The function $\psi$ satisfies 
$$\int_{\R^2}\frac{|\psi(p)-\psi(q)|^2}{|p-q|^{1+\mu}}dpdq<\infty.$$
\item [$(2)$] There exists a measurable function $\hat{\psi}_0(x)$ on $\R$ such that 
$$\int_\R |x|^\mu|\hat{\psi}_0(x)|^2dx<\infty,$$
and $x\hat{\psi}(x)=x\hat{\psi}_0(x)$ as distributions. 
\end{itemize}
Moreover, 
\begin{itemize}
\item [(i)] If $\psi$ satisfies the conditions $(1)$,$(2)$, then  
$$\int_\R |\psi(2p)-\psi(p)|^2\frac{dp}{|p|^\mu}<\infty.$$
\item [(ii)] 
If $\psi$ is an even differentiable function satisfying 
$$\int_0^\infty|\psi'(p)|^2 |p|^{2-\mu}dp<\infty,$$
then $\psi$ satisfies the conditions $(1)$,$(2)$. 
\end{itemize}
\end{lemma}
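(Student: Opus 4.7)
My plan is to prove the equivalence $(1)\Leftrightarrow(2)$ by combining Plancherel's theorem with Fubini, guided by the formal identity
$$\int_{\R^2}\frac{|\psi(p)-\psi(q)|^2}{|p-q|^{1+\mu}}\,dp\,dq \;=\; \frac{c_\mu}{2\pi}\int_\R |x|^\mu|\hat{\psi}(x)|^2\,dx,$$
where $c_\mu:=\int_\R|e^{iu}-1|^2|u|^{-1-\mu}du$ is finite for $0<\mu<2$; this is seen from the substitution $h=u/|x|$, which gives $\int_\R|e^{ihx}-1|^2|h|^{-1-\mu}dh = c_\mu|x|^\mu$.

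For the direction $(1)\Rightarrow(2)$, I would first substitute $h=p-q$ in (1) and invoke Fubini to conclude that $\psi(\cdot+h)-\psi\in L^2(\R)$ for almost every $h$, with $\int_\R\|\psi(\cdot+h)-\psi\|_{L^2}^2|h|^{-1-\mu}dh<\infty$. For such $h$ the distributional Fourier transform $(e^{ihx}-1)\hat{\psi}$ is an $L^2$ function $G_h(x)$. Choose two admissible values $h_1,h_2$ whose ratio is irrational, so that the zero sets of $e^{ih_jx}-1$ meet only at $x=0$; then $\hat{\psi}$ is identified on $\R\setminus\{0\}$ with the measurable function $F(x):=G_{h_j}(x)/(e^{ih_jx}-1)$, the two representations agreeing wherever both make sense. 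Applying Plancherel to $\psi(\cdot+h)-\psi$ and then Fubini yields
$$\int_\R\!\int_\R\frac{|\psi(p+h)-\psi(p)|^2}{|h|^{1+\mu}}\,dp\,dh \;=\; \frac{c_\mu}{2\pi}\int_\R|x|^\mu|F(x)|^2\,dx,$$
so setting $\hat{\psi}_0:=F$ (with any fixed value at $x=0$) gives the weighted integrability in (2). To verify the distributional identity $x\hat{\psi}=x\hat{\psi}_0$, note that $\hat{\psi}-F$ is a tempered distribution supported at $\{0\}$, hence a finite sum $\sum_{k=0}^N c_k\delta_0^{(k)}$; the relation $(e^{ih_1x}-1)(\hat{\psi}-F)=0$, expanded via the Taylor series of $e^{ih_1x}-1$ at $0$, forces $c_k=0$ for every $k\geq 1$ by induction starting from the top degree, leaving $\hat{\psi}-F=c_0\delta_0$, which is annihilated by multiplication by $x$.

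For the converse $(2)\Rightarrow(1)$, the identity $x\hat{\psi}=x\hat{\psi}_0$ together with the fact that $(e^{ihx}-1)/x$ extends to a smooth bounded function (with value $ih$ at $x=0$) gives $(e^{ihx}-1)\hat{\psi}=(e^{ihx}-1)\hat{\psi}_0$ as distributions, and the elementary estimate $|e^{ihx}-1|^2\leq 4\min(|hx|^2,1)\leq 4|hx|^\mu$ (valid for $0<\mu\leq 2$) shows this is an $L^2$ function. Hence $\psi(\cdot+h)-\psi\in L^2(\R)$ for every $h\neq 0$, and running the Plancherel--Fubini computation of the forward direction in reverse produces (1).

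The main obstacle is making the distributional identification in the $(1)\Rightarrow(2)$ direction fully rigorous, especially excluding higher-order $\delta$-derivatives at the origin from $\hat{\psi}-F$; the rest of the argument reduces to routine manipulation with Plancherel and Fubini.
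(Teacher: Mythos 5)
Your argument for the equivalence of $(1)$ and $(2)$ follows essentially the same route as the paper's proof: the Plancherel--Fubini identity
$$\int_{\R^2}\frac{|\psi(p+h)-\psi(p)|^2}{|h|^{1+\mu}}\,dp\,dh=\frac{c_\mu}{2\pi}\int_\R|x|^\mu|\hat{\psi}_0(x)|^2dx,$$
the identification of $\hat{\psi}$ on $\R\setminus\{0\}$ with a locally square integrable function, and the elimination of derivatives of $\delta_0$ at the origin. Your device of two shifts $h_1,h_2$ with irrational ratio makes explicit a patching step the paper leaves implicit, and your induction killing $c_k$ for $k\geq 1$ is the ``routine work'' the paper alludes to.

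One step needs repair. You assert that $\hat{\psi}-F$ is a tempered distribution supported at $\{0\}$, but for $\mu=1$ (precisely the case used for admissible symbols) the bound $\int_\R|x|\,|F(x)|^2dx<\infty$ does not force $F$ to be locally integrable near the origin --- e.g.\ $F(x)=1/(x|\log x|)$ on $(0,1/2)$ satisfies it while $\int_0^{1/2}F=\infty$ --- so $F$ need not define a distribution and the difference $\hat{\psi}-F$ is not a priori meaningful. The paper sidesteps this by subtracting only after multiplying by $x$: since $\int_{|x|<1}|xF(x)|^2dx\leq\int|x|^\mu|F(x)|^2dx<\infty$, the function $xF$ is always a tempered distribution, so $x\hat{\psi}-xF$ is a distribution supported at $\{0\}$, and one multiplies it by the smooth function $h(x)=(e^{ih_1x}-1)/x$, which does not vanish at $0$, to run your induction; this in fact kills all $c_k$ including $c_0$ and gives $x\hat{\psi}=xF$ directly. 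With that substitution your argument goes through verbatim. Finally, you do not address parts (i) and (ii); the paper does not prove them either (it cites Tsirelson), so this omission matches the intended scope, but it should be acknowledged.
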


\begin{proof} Assume that (1) holds. 
Since the condition (1) is written as 
$$\int_{\R^2}\frac{|\psi(p+q)-\psi(q)|^2}{|p|^{1+\mu}}dqdp<\infty,$$
the function $q\mapsto \psi(p+q)-\psi(q)$ is square integrable for almost all $p\in \R$, and so is 
the distribution $(e^{ipx}-1)\hat{\psi}(x)$ by the Plancherel theorem. 
This shows that the restriction of $\hat{\psi}$ to $\cD(\R\setminus \{0\})$ is given by a locally 
square integrable function on $\R\setminus\{0\}$,  say $\hat{\psi}_0(x)$, and that for almost all $p\in \R$, 
the equation 
\begin{equation}\label{distribution}
(e^{ipx}-1)\hat{\psi}(x)=(e^{ipx}-1)\hat{\psi}_0(x) 
\end{equation}
holds as distributions in the variable $x$. 
In the above, we regards $\hat{\psi}_0(x)$ as a measurable function on $\R$ by setting $\hat{\psi}_0(0)=0$.  
Now the Plancherel formula implies 
\begin{eqnarray*}
\int_\R\int_\R\frac{|\psi(p+q)-\psi(q)|^2}{|p|^{1+\mu}}dqdp
&=&\frac{1}{2\pi} \int_\R\int_\R\frac{|(e^{ipx}-1)\hat{\psi}_0(x)|^2}{|p|^{1+\mu}}dxdp\\
&=& \frac{2}{\pi} \int_\R\int_\R\frac{\sin^2\frac{px}{2}|\hat{\psi}_0(x)|^2}{|p|^{1+\mu}}dpdx \\
&=&\frac{2^{1-\mu}}{\pi}\int_\R\frac{\sin^2r}{|r|^{1+\mu}}dr\int_\R|x|^\mu|\hat{\psi}_0(x)|^2dx. 
\end{eqnarray*}
This implies the convergence of the integral in (2), which shows that $x\hat{\psi}_0(x)$ is a tempered distribution. 
Since the support of $x\hat{\psi}(x)-x\hat{\psi}_0(x)$ is contained in $\{0\}$, 
we have   
$$x\hat{\psi}(x)-x\hat{\psi}_0(x)=\sum_{k=0}^nc_k\delta_0^{(k)}(x),$$
where $c_k\in \C$ and $\delta_0$ is the Dirac mass at 0. 
We choose $p\neq 0$ such that (\ref{distribution}) holds, and set 
$$h(x)=\left\{
\begin{array}{ll}
\frac{e^{ipx}-1}{x} , &\quad x\neq 0, \\
ip , &\quad x=0
\end{array}
\right..
$$
Then 
$$0=(e^{ipx}-1)(\hat{\psi}(x)-\hat{\psi}_0(x))=h(x)(x\hat{\psi}(x)-x\hat{\psi}_0(x))=\sum_{k=0}^nc_kh(x)\delta_0^{(k)}(x).$$
It is routine work to show $c_k=0$ for all $k$ from this and  $h(0)\neq 0$, and we get (2). 

By tracing back the same computation as above, we can also show the implication from (2) to (1). 

(i) and (ii) are essentially \cite[Proposition 3,6]{T2}. 
\end{proof}

Summarizing our argument so far, we get 

\begin{theorem}\label{Sobolev} Let $\Phi\in L^\infty(\R)\otimes M_N(\C)$ be a projection. 
Then the following three conditions are equivalent: 
\begin{itemize}
\item [(1)] The symbol $\Phi$ is admissible. \medskip
\item [(2)] 
$$\int_{\R^2}\frac{\tr(|\Phi(p)-\Phi(q)|^2)}{|p-q|^2}dpdq<\infty.$$
\item [(3)] There exists a $M_N(\C)$-valued measurable function $\hat{\Phi}_0(x)$ on $\R$ such that 
$$\int_\R|x|\tr(|\hat{\Phi}_0(x)|^2)dx<\infty,$$
and $x\hat{\Phi}(x)=x\hat{\Phi}_0(x)$ as $M_N(\C)$-valued distributions. 
\end{itemize}
Moreover,  
\begin{itemize}
\item [(i)] If $\Phi$ is admissible, then 
$$\int_\R\tr(|\Phi(2p)-\Phi(p)|^2)\frac{dp}{|p|}<\infty.$$
\item [(ii)] If $\Phi$ is an even differentiable function satisfying 
$$\int_0^\infty \tr(|\Phi'(p)|^2)pdp<\infty,$$
then $\Phi$ is admissible. 
\end{itemize}
\end{theorem}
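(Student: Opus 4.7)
The plan is to reduce the matrix statement to the scalar statement already worked out in Lemma \ref{Besov} (applied with $\mu=1$), entry by entry.

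For (1) $\Leftrightarrow$ (2), I will just invoke the computation in the paragraphs immediately preceding the theorem: the conformal change of variables $e^{it}=-(p+i)/(p-i)$ combined with the Fourier transform intertwines $H_\Phi$ on $\R$ with the Hankel operator $H_\phi$ on $\T$; since $\phi$ is a projection, Hilbert--Schmidtness of $H_\phi$ is equivalent to simultaneous Hilbert--Schmidtness of $H_{\phi_{ij}}$ and $H_{\overline{\phi_{ij}}}$ for all $i\leq j$; equivalently, each $\phi_{ij}$ lies in $W^{1/2}_2(\T)$; and the displayed Fourier-series identity rewrites this as the double integral in (2). This part of the argument is already present above the theorem, so nothing new is required.

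For (2) $\Leftrightarrow$ (3), and the moreover parts (i) and (ii), the key identity is $\tr(|M|^2)=\sum_{i,j}|M_{ij}|^2$, applied in turn to $M=\Phi(p)-\Phi(q)$, $M=\hat\Phi_0(x)$, $M=\Phi(2p)-\Phi(p)$, and $M=\Phi'(p)$. This decomposes every matrix-valued integral appearing in the theorem into a sum over $(i,j)$ of scalar integrals that are precisely the hypotheses and conclusions of Lemma \ref{Besov} with $\mu=1$ applied to $\psi=\Phi_{ij}$. So the entrywise equivalences combine to give the matrix statements. For condition (3) specifically, I define the matrix-valued function $\hat\Phi_0$ by declaring its $(i,j)$-entry to be the scalar function $(\widehat{\Phi_{ij}})_0$ produced by Lemma \ref{Besov}; then the distributional identity $x\hat\Phi(x)=x\hat\Phi_0(x)$ holds entrywise, hence as an $M_N(\C)$-valued distribution.

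There is no genuinely new technical obstacle. The one subtle step, namely ruling out Dirac-mass corrections $\sum_{k}c_k\delta_0^{(k)}$ in the identification of $\hat\psi$ with the locally square-integrable function $\hat\psi_0$, has already been handled inside the proof of Lemma \ref{Besov}; the entrywise reduction inherits that argument componentwise, so nothing comparable needs to be redone at the matrix level.
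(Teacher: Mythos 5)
Your proposal is correct and follows essentially the same route as the paper: the theorem is stated there precisely as a summary of the preceding discussion (the conformal transfer to Hankel operators on $\T$ and the Sobolev/Besov characterization) combined with Lemma \ref{Besov} applied with $\mu=1$, entry by entry via $\tr(|M|^2)=\sum_{i,j}|M_{ij}|^2$. Your explicit entrywise assembly of $\hat\Phi_0$ from the scalar regular parts $(\widehat{\Phi_{ij}})_0$ is exactly what the paper leaves implicit, and no further argument is needed.
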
 

\begin{remark} For an admissible symbol $\Phi$, we call $\hat{\Phi}_0$ in Theorem \ref{Sobolev},(3) the \textit{regular part of $\hat{\Phi}$}. 
It is not clear whether $\hat{\Phi}_0$ gives a distribution on $\R$ in general.  
However, when it is the case, e.g. $\hat{\Phi}_0\in L^1(\R)\otimes M_N(\C)$, then we have 
$\hat{\Phi}=\hat{\Phi}_0+\delta_0\otimes Q$ for some $Q\in M_N(\C)$. 
\end{remark}

\begin{definition}\label{Toeplitz} 
Let $\Phi\in L^\infty(\R)\otimes M_N(\C)$ be an admissible symbol, and let $A=T_\Phi$. 
We denote by $\alpha^\Phi=\{\alpha^\Phi_t\}_{t\geq 0}$ the $E_0$-semigroup acting on the type I factor $\cM_A$ determined by 
$$\alpha^\Phi_t(\pi_A(a(f)))=\pi_A(a(S_tf)),\quad \forall f\in K.$$
We call $\alpha^\Phi$ the \textit{Toeplitz CAR flow} associated with the symbol $\Phi$. 
\end{definition}

For a Toeplitz CAR flow $\alpha^\Phi$, we simply denote $\cE_\Phi:=\cE_{\alpha^\Phi}$ and  
$\cA^\Phi_a(I):=\cA^{\cE_\Phi}_a(I)$.
  
\begin{example} When $\Phi\in M_N(\C)$ is a constant projection, the corresponding Toeplitz CAR flow is nothing but 
the CAR flow of index $N$, 
which gives the unique cocycle conjugacy class of type I$_N$ $E_0$-semigroups. 
\end{example} 

\begin{example}\label{typical examples} 
Powers' first example of a type III $E_0$-semigroup is the Toeplitz CAR flow associated with the symbol 
$$\Phi(p)=\frac{1}{2}\left(
\begin{array}{cc}
1 &e^{i\theta(p)}  \\
e^{-i\theta(p)} &1 
\end{array}
\right),
$$
where $\theta(p)=(1+p^2)^{-1/5}$. 
More generally, if $\theta(p)$ is a real differentiable function satisfying $\theta(-p)=\theta(p)$ for $\forall p\in \R$ and 
$$\int_0^\infty |\theta'(p)|^2pdp<\infty,$$
then Theorem \ref{Sobolev} shows that 
the symbol $\Phi$ as above is admissible. 
In Section \ref{examples}, we will show that for $0<\nu\leq 1/4$, the symbols $\Phi_\nu$, given by $\theta_\nu(p)=(1+p^2)^{-\nu}$ 
in place of $\theta(p)$ above, give rise to mutually non cocycle conjugate type III $E_0$-semigroups. 
\end{example}

We summarize a few facts frequently used in this paper in  the next lemma. 
For a measurable subset $E\subset \R$, we set $K_E=L^2(E,\C^N)$. 
We denote by $P_E$ the projection from $\tK$ onto $K_E$. 
When $I\subset (0,\infty)$, we often regard $P_I$ as an element of $B(K)$. 
For simplicity, we write $K_t=K_{(0,t)}$ and $P_t=P_{(0,t)}$ for $t>0$. 

\begin{lemma}\label{interval} Let $\Phi\in L^\infty(\R)\otimes M_N(\C)$ be an admissible symbol, and let $\hat{\Phi}_0$ be 
the regular part of $\hat{\Phi}$. 
We set $A=T_\Phi$. 
\begin{itemize}
\item [(i)] The relative commutant $\cM_A\cap \alpha^\Phi_t(\cM_A)'$ is $\pi_A^t(\fA(K_t))''$. \medskip
\item [(ii)] Let $I$ and $J$ be mutually disjoint two open sets in $\R$. 
We assume that $I$ and $J$ have only finitely many connected components.  
Then $P_JC_\Phi P_I$ is a Hilbert-Schmidt operator with Hilbert-Schmidt norm  
$$\|P_JC_\Phi P_I\|_\HS^2=\frac{1}{4\pi^2}\int_\R|(J+t)\cap I|\tr(|\hat{\Phi}_0(t)|^2)dt.$$
\item [(iii)] Let $I\subset (0,\infty)$ be an open (finite or infinite) interval. 
Then the restriction of $\pi_{A}$ to $\fA(K_I)$ is of type I, and the commutator $[C_\Phi,P_I]$ is Hilbert-Schmidt. 
\end{itemize}
\end{lemma}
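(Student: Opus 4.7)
The plan is to address the three statements in order, with (ii) being the technical heart.

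For (i), this is an immediate application of Lemma \ref{commutant},(i). From $\alpha^\Phi_t(\pi_A(a(f))) = \pi_A(a(S_tf))$ and $S_tK = K_{(t,\infty)}$ we have $\alpha^\Phi_t(\cM_A) = \pi_A(\fA(K_{(t,\infty)}))''$; since the orthogonal complement of $K_{(t,\infty)}$ in $K$ is $K_t$, Lemma \ref{commutant},(i) with $L = K_{(t,\infty)}$ yields $\cM_A \cap \alpha^\Phi_t(\cM_A)' = \pi_A^t(\fA(K_t))''$.

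For (ii), I would carry out a direct integral kernel computation. Fourier inversion formally identifies $C_\Phi$ with the integral operator on $\tK$ whose distributional kernel is $k(y,z) = \frac{1}{2\pi}\hat{\Phi}(z-y)$. On the region $J \times I$ the argument $z - y$ avoids $0$ by disjointness, so this distributional kernel is honestly given by the locally $L^2$ function $\frac{1}{2\pi}\hat{\Phi}_0(z-y)$ supplied by Theorem \ref{Sobolev},(3). Granting this, the Hilbert-Schmidt norm squared of $P_J C_\Phi P_I$ equals the $L^2$-norm squared of the kernel,
$$\|P_J C_\Phi P_I\|_\HS^2 = \frac{1}{4\pi^2}\int_J \int_I \tr(|\hat{\Phi}_0(z-y)|^2)\,dz\,dy,$$
and the substitution $t = z - y$ produces the claimed formula. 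The main obstacle is to justify the kernel interpretation rigorously, since $\hat{\Phi}$ is only a tempered distribution. I would pair $C_\Phi$ with test sections $f \in \cD(I) \otimes \C^N$ and $g \in \cD(J) \otimes \C^N$: by Plancherel and Fubini (valid since $\Phi \in L^\infty$ and $\hat{f}, \hat{g}$ are Schwartz), the matrix element $\inpr{C_\Phi f}{g}$ equals the double integral $\frac{1}{2\pi}\int \int \inpr{\hat{\Phi}_0(z-y)f(z)}{g(y)}_{\C^N}\,dz\,dy$, which identifies the kernel after density. Finiteness of the right-hand side uses the finite-components hypothesis: since for each pair of component intervals $I_i \subset I$, $J_j \subset J$ the function $t \mapsto |(J_j + t) \cap I_i|$ is Lipschitz with constant $1$ and vanishes at $t = 0$ (by disjointness), summing over the finitely many pairs gives $|(J+t)\cap I| \leq C|t|$ for all $t \in \R$, and this combined with $\int_\R |t|\tr(|\hat{\Phi}_0(t)|^2)\,dt < \infty$ from admissibility delivers absolute convergence.

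For (iii), both assertions reduce to (ii). Writing $[C_\Phi, P_I] = P_{I^c}C_\Phi P_I - P_I C_\Phi P_{I^c}$ with $I^c = \R \setminus I$ (which has at most two components), part (ii) applied to the disjoint pair $(I, I^c)$ shows each summand is Hilbert-Schmidt. For the type I claim, Theorem \ref{qfstate},(ii) says that $\pi_A|_{\fA(K_I)}$ is quasi-equivalent to $\pi_{P_I A P_I}$ with $P_I A P_I$ viewed on $K_I$, so by Theorem \ref{qfstate},(iii) it suffices to verify $\tr(P_I A P_I - (P_I A P_I)^2) < \infty$. Since $I \subset (0,\infty)$ forces $P_I P_+ = P_I$, we have $P_I A P_I = P_I C_\Phi P_I$; then $C_\Phi^2 = C_\Phi$ (as $\Phi$ is a projection) together with $P_I + P_{I^c} = 1_{\tK}$ yields
$$P_I A P_I - (P_I A P_I)^2 = P_I C_\Phi P_{I^c} C_\Phi P_I = (P_{I^c}C_\Phi P_I)^*(P_{I^c}C_\Phi P_I),$$
whose trace equals $\|P_{I^c} C_\Phi P_I\|_\HS^2$, finite by (ii).
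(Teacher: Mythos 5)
Your proof is correct and follows essentially the same route as the paper: (i) is the same appeal to the twisted duality lemma, (ii) is the same pairing of $C_\Phi$ against test sections supported in the disjoint sets to replace $\hat\Phi$ by its regular part and read off the Hilbert--Schmidt kernel, and (iii) is the same reduction to (ii) via $J=\R\setminus\overline{I}$. The extra details you supply (the Lipschitz bound $|(J+t)\cap I|\le C|t|$ guaranteeing finiteness, and the explicit identity $P_IAP_I-(P_IAP_I)^2=(P_{I^c}C_\Phi P_I)^*(P_{I^c}C_\Phi P_I)$) are exactly what the paper leaves implicit.
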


\begin{proof}(i) The statement follows Lemma \ref{commutant},(i). 

(ii) Let $f\in \cD(I,\C^N)$ and $g\in \cD(J,\C^N)$. 
Then 
$$\inpr{C_\Phi f}{g}=\sum_{i,j=1}^N\frac{1}{2\pi}\int_{\R}\Phi(p)_{ij}\hat{f_j}(p)\overline{\hat{g_i}(p)}dp
=\sum_{i,j=1}^N\frac{1}{2\pi}\int_{\R}\Phi(p)_{ij}\widehat{f_j*g_i^{\#}}(p)dp,$$
where $g_i^{\#}(x)=\overline{g_i(-x)}$. 
Since $\widehat{f_j*g_i^{\#}}\in \cD(\R\setminus \{0\})$, we get 
\begin{eqnarray*}
\inpr{C_\Phi f}{g}&=&\sum_{i,j=1}^N\frac{1}{2\pi}\int_{\R}\hat{\Phi}_0(x)_{ij}f_j*g_i^{\#}(x)dx\\
&=&\sum_{i,j=1}^N\frac{1}{2\pi}\int_{\R^2}\hat{\Phi}_0(y-x)_{ij}f_j(y)\overline{g_i(x)}dxdy.
\end{eqnarray*}
Since $\chi_J(x)\chi_I(y)\hat{\Phi}_0(y-x)$ is square integrable (as we will see below), 
the operator $P_JC_\Phi P_I$ is Hilbert-Schmidt, and its Hilbert-Schmidt norm is
\begin{eqnarray*}
\frac{1}{4\pi^2}\int_{\R^2}\chi_J(x)\chi_I(y)\tr(|\hat{\Phi}_0(y-x)|^2)dxdy
&=&\frac{1}{4\pi^2}\int_\R|(J+t)\cap I|\tr(|\hat{\Phi}_0(t)|^2)dt\\
&<&\infty,
\end{eqnarray*}
where we use Theorem \ref{Sobolev},(3). 

(iii) Applying (ii) to $I$ and $J=\R\setminus \overline{I}$, we see that $(1_{\tK}-P_I)C_\Phi P_I$ is Hilbert-Schmidt. 
This and Theorem \ref{qfstate},(ii),(iii) show the first statement. 
Since 
$$[C_\Phi,P_I]=(1_{\tK}-P_I)C_\Phi P_I-P_IC_\Phi (1_{\tK}-P_I),$$
the commutator $[C_\Phi,P_I]$ is Hilbert-Schmidt. 
\end{proof}

\section{A dichotomy theorem}

Based on Powers' argument in \cite{Po1}, Arveson proved the following type III criterion in \cite[Theorem 13.6.1]{Arv}: 

\begin{theorem}[Arveson--Powers]\label{P-A}
Let $\Phi\in L^\infty(\R)\otimes M_N(\C)$ be an admissible symbol having the limit 
$$\Phi(\infty):=\lim_{|p|\to\infty}\Phi(p).$$
If the Toeplitz CAR flow $\alpha^\Phi$ is spatial, then 
$$\int_\R\tr(|\Phi(p)-\Phi(\infty)|^2)dp<\infty.$$
\end{theorem}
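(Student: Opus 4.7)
The plan is to use the intertwining isometry semigroup arising from spatiality to produce a comparison state on $\fA(K)$ whose covariance operator is forced to agree with $A=T_\Phi$ on the shifted subspace and to be quasi-equivalent to $\omega_A$, and then to exploit the shift-invariance of $T_{\Phi(\infty)}$ together with the existence of the limit $\Phi(\infty)$ to extract the required $L^2$-integrability.

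Concretely, spatiality yields a strongly continuous semigroup of isometries $\{U_t\}_{t\geq 0}\subset\cM_A$ with $\alpha^\Phi_t(X)U_t=U_tX$ for every $X\in\cM_A$. Setting $\Omega_t:=U_t\Omega_A$, the vector state
\[
\psi_t(X):=\inpr{\pi_A(X)\Omega_t}{\Omega_t},\qquad X\in\fA(K),
\]
has a covariance operator $B_t\in B(K)$, and the intertwining $\pi_A(a(S_tf))=U_t\pi_A(a(f))U_t^*$ together with $U_t^*U_t=1$ immediately gives $\psi_t(a(S_tf)a(S_tg)^*)=\omega_A(a(f)a(g)^*)$, i.e. $S_t^*B_tS_t=A$. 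In particular $B_t-A$ vanishes on the shifted subspace $S_tK$. Since $\psi_t$ is a normal state on the type I factor $\cM_A$, it is quasi-equivalent to $\omega_A$, so Lemma \ref{covariance operator}(ii) gives that $B_t-A$ is compact; a more quantitative analysis (comparing the quasi-free states $\omega_A$ and $\omega_{B_t}$ via Theorem \ref{qfstate}(iv) and Lemma \ref{typeI}, after checking that $\omega_{B_t}$ is itself of type I using $S_t^*B_tS_t=A$ together with admissibility of $\Phi$) upgrades this to $\tr((B_t-A)^2)<\infty$, so that $B_t-A$ is Hilbert--Schmidt.

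The second ingredient is the existence of $\Phi(\infty)$. Since $\Phi(\infty)$ is a constant matrix, $T_{\Phi(\infty)}=C_{\Phi(\infty)}|_K$ commutes with all $S_t$, so $S_t^*T_{\Phi(\infty)}S_t=T_{\Phi(\infty)}$, and combining with the intertwining identity gives
\[
S_t^*(B_t-T_{\Phi(\infty)})S_t=A-T_{\Phi(\infty)}=P_+C_{\Phi-\Phi(\infty)}P_+.
\]
The plan for the last step is to use the semigroup law $U_{s+t}=U_sU_t$ together with the assumption $\Phi(p)\to\Phi(\infty)$ as $|p|\to\infty$ to extract an asymptotic limit of $B_t$ that is forced to be shift-invariant, hence equal to $T_{\Phi(\infty)}$; transferring the Hilbert--Schmidt bound on $B_t-A$ through this limit identification and applying the Hilbert--Schmidt norm formula of Lemma \ref{interval}(ii) to growing intervals then converts, by Fourier--Plancherel, into the required integrability $\int_\R\tr(|\Phi(p)-\Phi(\infty)|^2)dp<\infty$.

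The main obstacle, and the technical heart of the argument, is this final transfer step. Half-line compressions of $A-T_{\Phi(\infty)}$ are never themselves Hilbert--Schmidt (by Brown--Halmos-type obstructions for Toeplitz operators with bounded symbol), so the $L^2$-integrability of $\Phi-\Phi(\infty)$ cannot be read off directly from any Hilbert--Schmidt property of $A-T_{\Phi(\infty)}$. It has to be harvested from the Hilbert--Schmidt bound on $B_t-A$ via the asymptotic identification $B_t\to T_{\Phi(\infty)}$, and controlling this limit demands the delicate Powers--Arveson-style calculation that accesses the low-frequency behaviour of $\hat{\Phi}-2\pi\Phi(\infty)\delta_0$, the Fourier-side obstruction to the integrability.
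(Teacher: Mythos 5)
There is a genuine gap, and it sits exactly where you flag it: the ``final transfer step'' is not an argument but a plan, and the mechanism you propose for it cannot work in the form stated. The paper (which proves this statement as the implication (ii) $\Rightarrow$ (iii) of Theorem \ref{dichotomy}) does not compare $A$ with $T_{\Phi(\infty)}$ globally at all. Instead it localizes: Lemma \ref{even unit} produces a $\gamma$-invariant unit $V_t$, Lemma \ref{making a state} shows $V_1^*\pi_A(\cdot)V_1$ defines a pure state $\varphi$ of $\fA(K_1)$ on the \emph{finite} interval $(0,1)$, the factorization $V_1=V_tV_{1-t}$ forces the covariance operator $B$ of $\varphi$ to commute with every $P_t$ and hence to be multiplication by some $Q(x)\in M_N(\C)$, a Calkin-algebra comparison with $P_1AP_1$ forces $Q(x)$ to be a projection a.e., and only then does Lemma \ref{covariance operator}(i) identify $\varphi$ with the quasi-free state $\omega_B$, so that Theorem \ref{qfstate}(v) applies and Plancherel converts the resulting trace bound into $\frac{1}{2\pi}\int_0^1\int_\R\tr(|\Phi(p)-Q(x)|^2)\,dp\,dx<\infty$. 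The finiteness of $|I|=1$ is not a convenience here: $\|C_\Psi P_E\|_\HS^2=\frac{|E|}{2\pi}\int_\R\tr(|\Psi(p)|^2)dp$, so only a finite-interval compression can turn a Hilbert--Schmidt condition into $L^2$-integrability of a symbol. Your global half-line setup has no such mechanism, which is why you end up needing an unexplained ``harvesting'' of $L^2$-integrability from $B_t-A$ and an asymptotic identification $B_t\to T_{\Phi(\infty)}$ that you neither define (in what topology? why does a limit exist along the semigroup?) nor justify.

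A second, independent gap: your quantitative upgrade from ``$B_t-A$ compact'' to ``$B_t-A$ Hilbert--Schmidt'' invokes Theorem \ref{qfstate}(iv) and Lemma \ref{typeI} for ``the quasi-free states $\omega_A$ and $\omega_{B_t}$,'' but $\psi_t$ is merely a state with covariance operator $B_t$; nothing forces $\psi_t=\omega_{B_t}$, and quasi-equivalence of $\psi_t$ with $\omega_A$ says nothing about $\omega_{B_t}$. The only tool the paper has for identifying a state with a quasi-free one from its two-point function alone is Lemma \ref{covariance operator}(i), which requires the covariance operator to be a projection --- and establishing that (via the multiplication-operator structure and the Calkin-algebra argument) is precisely the step your proposal omits. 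You would also need the analogue of Lemma \ref{making a state} to know that the odd $n$-point functions of $\psi_t$ vanish, which is where the $\gamma$-invariance of the unit enters and which your vector $U_t\Omega_A$ is not shown to satisfy. As written, the proposal correctly identifies the difficulty but does not overcome it.
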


The purpose of this section is to generalize Theorem \ref{P-A}, and to show the following dichotomy theorem, 
which can be considered as an analogue of \cite[Theorem 39]{pdct}.  

\begin{theorem}\label{dichotomy}
Let $\Phi\in L^\infty(\R)\otimes M_N(\C)$ be an admissible symbol. 
Then the following conditions are equivalent: 
\begin{itemize}
\item [(i)] The Toeplitz CAR flow $\alpha^\Phi$ is of type I$_N$. 
\item [(ii)] The Toeplitz CAR flow $\alpha^\Phi$ is spatial. 
\item [(iii)] There exists a projection $Q\in M_N(\C)$ satisfying 
$$\int_\R\tr(|\Phi(p)-Q|^2)dp<\infty.$$
\end{itemize}
In particular, every Toeplitz CAR flow is either of type I or type III.
\end{theorem}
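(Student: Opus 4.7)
The plan is to prove the cycle $(i) \Rightarrow (ii) \Rightarrow (iii) \Rightarrow (i)$; the final dichotomy assertion (every Toeplitz CAR flow is type I or type III) then follows at once, since $(i) \Leftrightarrow (ii)$ excludes type II. The implication $(i) \Rightarrow (ii)$ is immediate from the definitions.

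For $(iii) \Rightarrow (i)$, the plan is to identify $\cE_\Phi$ with the product system $\cE_Q$ of the CAR flow of index $N$ via Liebscher's criterion (Theorem \ref{isomorphic}); since $\cE_Q$ is of type $I_N$ by the example preceding the theorem, this suffices. Writing $Q_I = P_I Q = Q P_I$ for multiplication by $Q$ on $K_I$, the heart of the argument is the algebraic identity
$$Q_I(1 - P_I T_\Phi P_I)Q_I + (1 - Q_I)(P_I T_\Phi P_I)(1 - Q_I) = P_I\, C_{(\Phi - Q)^2}\, P_I$$
on $K_I$ for any bounded interval $I \subset (0,\infty)$, obtained by expanding $(\Phi - Q)^2 = \Phi + Q - \Phi Q - Q\Phi$ via the projection relations $\Phi^2 = \Phi$ and $Q^2 = Q$. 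Under $(iii)$ the symbol $(\Phi-Q)^2$ lies in $L^1(\R) \otimes M_N(\C)$, so $C_{(\Phi-Q)^2}$ is convolution with a continuous $M_N$-valued function, and a direct Hilbert--Schmidt computation (as in the proof of Lemma \ref{interval}(ii)) yields
$$\tr\bigl(P_I\, C_{(\Phi-Q)^2}\, P_I\bigr) = \frac{|I|}{2\pi}\int_\R \tr\bigl(|\Phi(p) - Q|^2\bigr)\,dp < \infty.$$
Lemma \ref{typeI} then gives quasi-equivalence of $\omega_{P_I T_\Phi P_I}$ and $\omega_{Q_I}$, which via Theorem \ref{qfstate}(ii) upgrades to quasi-equivalence of $\pi_{T_\Phi}|_{\fA(K_I)}$ and $\pi_Q|_{\fA(K_I)}$. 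Taking $I = (0,1)$ and composing the resulting $*$-isomorphism with conjugations by the unitaries $U_{T_\Phi}, U_Q$ of Lemma \ref{commutant}(ii) produces an isomorphism $\rho: B(E_\Phi(1)) \to B(E_Q(1))$ with $\rho(\pi^t_{T_\Phi}(a(f))) = \pi^t_Q(a(f))$ for every $f \in K_1$. Using the identification $\cA^\Phi(0,t) = \pi^t_{T_\Phi}(\fA(K_t))''$ from Lemma \ref{interval}(i), this $\rho$ satisfies $\rho(\cA^\Phi(0,t)) = \cA^Q(0,t)$ for every $t \in (0,1)$, and Liebscher's theorem then concludes $\cE_\Phi \cong \cE_Q$.

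For $(ii) \Rightarrow (iii)$, the plan is to generalize the Arveson--Powers analysis of Theorem \ref{P-A}. A normalized unit $u = \{u_t\}$ of $\cE_\Phi$ gives a $C_0$-semigroup of intertwining isometries satisfying $\pi_{T_\Phi}(a(S_t f))\, u_t = u_t\, \pi_{T_\Phi}(a(f))$ for all $f \in K$. In Arveson's proof the hypothesized limit $\Phi(\infty)$ enters only at the final step of naming the target projection, so my plan is to extract a candidate $Q \in M_N(\C)$ \emph{intrinsically} from $u$ alone --- for instance as a spectral projection of an operator built from the fiberwise asymptotic action of $u_t$ at large momenta, or as a weak-$*$ accumulation point of $\Phi$ selected by the intertwining relation together with the Hilbert--Schmidt property of $H_\Phi$ --- and then to rerun the Arveson--Powers asymptotic estimates with $Q$ in place of $\Phi(\infty)$. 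Since those estimates rely only on the Hilbert--Schmidt character of $H_\Phi$ (Theorem \ref{symbol}) and on the intertwining identity, they should deliver the bound $\int_\R \tr(|\Phi(p) - Q|^2)\,dp < \infty$ once $Q$ is fixed.

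The main obstacle is precisely the construction of $Q$ in this last step: in the Powers--Arveson setting $Q = \Phi(\infty)$ is simply handed to us by hypothesis, whereas here it must be produced from the algebraic data of $u$ alone, and the \emph{same} projection $Q$ must simultaneously control the $L^2$ tails of $\Phi$ at both $+\infty$ and $-\infty$, ruling out any naive one-sided averaging procedure.
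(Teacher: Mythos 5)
Your implication $(iii)\Rightarrow(i)$ is sound and follows essentially the paper's route: the paper proves a general $L^2$-perturbation theorem (Theorem \ref{L2}) via Lemma \ref{local equivalence} and Liebscher's criterion, and your computation with $P_IC_{(\Phi-Q)^2}P_I$ is just the special case $\Psi=Q$ of that lemma, with the twisting by $U=\tfrac{1}{\sqrt 2}(1-iR^A)$ handling the passage from $\pi_A(\fA(K_1))''$ to $B(\cE_\Phi(1))$ exactly as in the paper. The final dichotomy assertion also follows as you say.

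However, there is a genuine gap in $(ii)\Rightarrow(iii)$, and it is exactly the one you flag yourself: you never construct the projection $Q$, and the mechanisms you sketch (``fiberwise asymptotic action at large momenta'', ``weak-$*$ accumulation point of $\Phi$'') are not how the argument can be made to work --- indeed, as you note, nothing one-sided or asymptotic in $p$ can produce a single $Q$ controlling both tails. The paper's solution is of a different nature and has two ingredients you are missing. First, one needs not just any unit but a unit $V$ with $\gamma(V_t)=V_t$; this is Lemma \ref{even unit}, proved using Arveson's classification of type I product systems and the explicit action of $\Aut(E)$ on the unit space (an order-two element $(0,\xi,U)$ fixes the unit $(0,\tfrac12\xi)$). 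The evenness is what makes Lemma \ref{making a state} work: it forces $V_t^*\pi_A(X)V_t$ to be scalar for even $X$ and zero for odd $X$, so $V_t$ induces a pure $\gamma$-invariant state $\varphi$ on $\fA(K_t)$. Second, $Q$ is extracted not from large-momentum asymptotics but from the \emph{covariance operator} $B$ of $\varphi$ on $K_1$: the factorization $V_1=V_tV_{1-t}$ together with the vanishing one-point function gives $P_{(t,1)}BP_t=0$ for all $t$, so $B$ commutes with every $P_t$ and is therefore multiplication by a measurable $M_N(\C)$-valued function $Q(x)$; quasi-equivalence of $\varphi$ with the type I state $\omega_{P_1AP_1}$ forces $B-B^2$ to be compact, hence $Q(x)$ is a projection a.e., hence $\varphi=\omega_B$ is quasi-free; Theorem \ref{qfstate},(v) then yields $\int_0^1\int_\R\tr(|\Phi(p)-Q(x)|^2)\,dp\,dx<\infty$, and one takes $Q=Q(x)$ for any $x$ at which the inner integral is finite. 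Without these two steps --- the even unit and the identification of the covariance operator as a projection-valued multiplication operator --- your outline of $(ii)\Rightarrow(iii)$ does not close, and since this is the substantive content of the theorem (it is the generalization of Theorem \ref{P-A}), the proposal as it stands does not prove the result.
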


The implication from (i) to (ii) is trivial. 
That from (ii) to (iii) is a generalization of Theorem \ref{P-A}. 
Although we follow the same strategy as in the proof of Theorem \ref{P-A}, 
we will make a significant simplification of the argument using Arveson's classification of type I product systems 
(see Lemma \ref{even unit} below), 
which allows us to obtain the statement of this form. 
Since $\alpha^Q$ with a constant projection $Q\in M_N(\C)$ is of type I$_N$, 
the implication from (iii) to (i) follows from a $L^2$-perturbation theorem 
stated below, which can be considered as an analogue of \cite[Theorem 7.4,(1)]{IS}. 

\begin{theorem}\label{L2} Let $\Phi, \Psi\in L^\infty(\R)\otimes M_N(\C)$ be admissible symbols. 
If 
$$\int_\R\tr(|\Phi(p)-\Psi(p)|^2)dp<\infty,$$
then $\alpha^\Phi$ and $\alpha^\Psi$ are cocycle conjugate.  
\end{theorem}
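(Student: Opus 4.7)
The plan is to reduce the cocycle conjugacy of $\alpha^\Phi$ and $\alpha^\Psi$ to the quasi-equivalence of the quasi-free states $\omega_A$ and $\omega_B$, where $A=T_\Phi$ and $B=T_\Psi$. Once quasi-equivalence is in hand, the naturality of the shift endomorphism on $\fA(K)$ and the universal property of the GNS representation make the rest formal.

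By the Plancherel theorem, the hypothesis $\int_\R\tr(|\Phi(p)-\Psi(p)|^2)dp<\infty$ is equivalent to $C_\Phi-C_\Psi$ being Hilbert--Schmidt on $\tK$; compressing by $P_+$ shows that $A-B=T_\Phi-T_\Psi$ is Hilbert--Schmidt on $K$. To pass from here to $\omega_A\sim\omega_B$, let $P_A,P_B$ be the spectral projections of $A,B$ for the interval $[1/2,1]$. The elementary pointwise inequality $|x-\chi_{[1/2,1]}(x)|\leq 2x(1-x)$ on $[0,1]$, combined with admissibility ($\tr(A(1-A))<\infty$ and $\tr(B(1-B))<\infty$), shows that $A-P_A$ and $B-P_B$ are trace class. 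Hence $P_A-P_B$ is Hilbert--Schmidt. Using the identity $(1-P_B)P_A=(1-P_B)(P_A-P_B)$ one obtains
$$\tr\bigl(P_A(1-P_B)P_A\bigr)\leq \|P_A-P_B\|_{\HS}^2<\infty,$$
and symmetrically for $\tr((1-P_A)P_B(1-P_A))$; Theorem \ref{qfstate}(v) then gives $\omega_{P_A}\sim\omega_{P_B}$. The same pointwise inequality, applied with the second operator replaced by $P_A$ (using that $P_A$ commutes with $A$), verifies the hypothesis of Lemma \ref{typeI} and yields $\omega_A\sim\omega_{P_A}$; similarly $\omega_B\sim\omega_{P_B}$. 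Transitivity of quasi-equivalence delivers $\omega_A\sim\omega_B$.

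Quasi-equivalence provides a $*$-isomorphism $\theta:\cM_A\to\cM_B$ satisfying $\theta\circ\pi_A=\pi_B$. Let $\rho_t$ denote the shift endomorphism of $\fA(K)$ given by $\rho_t(a(f))=a(S_tf)$. The defining relations $\alpha^\Phi_t\circ\pi_A=\pi_A\circ\rho_t$ and $\alpha^\Psi_t\circ\pi_B=\pi_B\circ\rho_t$ immediately yield $\theta\circ\alpha^\Phi_t=\alpha^\Psi_t\circ\theta$ on the $*$-subalgebra $\pi_A(\fA(K))$; the normality of $\theta$, $\alpha^\Phi_t$ and $\alpha^\Psi_t$ extends this identity to all of $\cM_A$. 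Hence $\alpha^\Phi$ and $\alpha^\Psi$ are conjugate, and in particular cocycle conjugate.

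The main obstacle I anticipate is the bootstrap step from $A-B$ Hilbert--Schmidt to $\omega_A\sim\omega_B$: Theorem \ref{qfstate}(iv) demands Hilbert--Schmidt behaviour of the \emph{square roots} $A^{1/2}-B^{1/2}$ and $(1-A)^{1/2}-(1-B)^{1/2}$, which is not automatic from Hilbert--Schmidt behaviour of $A-B$ alone. The spectral projection detour above uses the type I hypothesis (encoded in admissibility) in an essential way, replacing the square-root criterion by the cleaner projection criterion of Theorem \ref{qfstate}(v); once quasi-equivalence is established, the remaining implication is structural.
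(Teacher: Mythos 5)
Your argument breaks down at the very first analytic step, and the failure is not repairable along the route you chose. Square integrability of $\Phi-\Psi$ does \emph{not} make $C_\Phi-C_\Psi=C_{\Phi-\Psi}$ Hilbert--Schmidt: a Fourier multiplier is unitarily equivalent to a multiplication operator, and a multiplication operator on $L^2(\R,\C^N)$ is Hilbert--Schmidt only if it vanishes. What Plancherel actually gives (and what the paper uses, via \cite[Proposition 13.4.1]{Arv}) is that $C_{\Phi-\Psi}P_I$ is Hilbert--Schmidt for a \emph{finite} interval $I$, with $\|C_{\Phi-\Psi}P_I\|_\HS^2=\tfrac{|I|}{2\pi}\int_\R\tr(|\Phi(p)-\Psi(p)|^2)dp$; the factor $|I|$ is the whole point. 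Consequently $A-B=T_{\Phi-\Psi}=P_+C_{\Phi-\Psi}P_+$ is not Hilbert--Schmidt --- a Toeplitz operator with bounded symbol is not even compact unless the symbol is zero --- so the spectral-projection bootstrap never gets off the ground. Worse, the conclusion you are aiming for is false: running your own intended criterion (Lemma \ref{typeI}, i.e.\ Theorem \ref{qfstate}(v)) over the half-line reduces, modulo the Hilbert--Schmidt commutators $[C_\Phi,P_+]$ supplied by admissibility, to the finiteness of $\|C_{\Phi(1-\Psi)}P_+\|_\HS^2+\|C_{(1-\Phi)\Psi}P_+\|_\HS^2=\tfrac{|(0,\infty)|}{2\pi}\int_\R\tr(|\Phi-\Psi|^2)dp$, which is infinite unless $\Phi=\Psi$ a.e. So $\omega_A$ and $\omega_B$ are essentially never globally quasi-equivalent, and your final step would have produced outright \emph{conjugacy} of the two flows --- a much stronger statement than the theorem claims, which should itself have been a warning sign.

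The paper's proof is forced to be local precisely for this reason: Lemma \ref{local equivalence} establishes quasi-equivalence only of the \emph{restrictions} $\omega_{P_IAP_I}$ and $\omega_{P_IBP_I}$ to $\fA(K_I)$ for finite intervals $I$ (this is where the $L^2$ hypothesis enters, through the $|I|$ factor above). One then has to upgrade the resulting isomorphism $\pi_A(\fA(K_1))''\cong\pi_B(\fA(K_1))''$ to one matching the grading unitaries $R^A$ and $R^B$, restrict it to the relative commutants $\cM_A\cap\alpha^\Phi_1(\cM_A)'\cong B(\cE_\Phi(1))$, check that it carries $\cA^\Phi(0,s)$ to $\cA^\Psi(0,s)$, and finally invoke Liebscher's criterion (Theorem \ref{isomorphic}) to conclude that the product systems are isomorphic, hence the flows cocycle conjugate. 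None of this machinery is optional; the global quasi-equivalence you posited would bypass it, but it simply does not hold.
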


We first give a representation theoretical consequence of the above square integrability condition. 

\begin{lemma}\label{local equivalence} Let $\Phi, \Psi\in L^\infty(\R)\otimes M_N(\C)$ be admissible symbols. 
We set $A=T_\Phi$ and $B=T_\Psi$. 
Then 
$$\int_\R\tr(|\Phi(p)-\Psi(p)|^2)dp<\infty,$$
if and only if for any (some) non-degenerate finite interval $I\subset (0,\infty)$, 
the two quasi-free states $\omega_{P_IAP_I}$ and $\omega_{P_IBP_I}$ of $\fA(K_I)$ are quasi-equivalent.
\end{lemma}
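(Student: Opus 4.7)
The strategy is to use Lemma \ref{typeI} to convert quasi-equivalence into a Hilbert--Schmidt condition on $A_I-B_I$, then to characterize that condition by $L^2$-integrability of $\Phi-\Psi$ through a kernel computation combined with the Besov structure from Lemma \ref{Besov}.

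By Lemma \ref{interval}\,(iii), the positive contractions $A_I:=P_IAP_I$ and $B_I:=P_IBP_I$ on $K_I$ give rise to type I quasi-free states, so $\tr(A_I-A_I^2),\tr(B_I-B_I^2)<\infty$. Lemma \ref{typeI} then gives the equivalence between quasi-equivalence of $\omega_{A_I},\omega_{B_I}$ and finiteness of $\tr(B_I(P_I-A_I)B_I+(P_I-B_I)A_I(P_I-B_I))$. The algebraic identity
\[
B_I(P_I-A_I)B_I+(P_I-B_I)A_I(P_I-B_I)=(A_I-B_I)^2+A_I(P_I-A_I),
\]
(both sides expand to $A_I+B_I^2-A_IB_I-B_IA_I$) combined with $\tr(A_I(P_I-A_I))\le\tr(A_I-A_I^2)<\infty$ reduces the quasi-equivalence to the assertion that $A_I-B_I=P_IC_{\Phi-\Psi}P_I$ be Hilbert--Schmidt on $K_I$.

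Setting $\Xi:=\Phi-\Psi\in L^\infty(\R)\otimes M_N(\C)$, it remains to show that $P_IC_\Xi P_I\in\HS$ on $K_I$ iff $\Xi\in L^2(\R)\otimes M_N(\C)$. The Schwartz kernel of $P_IC_\Xi P_I$ is $\chi_I(x)\chi_I(y)\,\tfrac{1}{2\pi}\hat\Xi(y-x)$; if $\Xi\in L^2$, the change of variables $(u,t)=((x+y)/2,y-x)$ and Plancherel give
\[
\|P_IC_\Xi P_I\|_\HS^2=\frac{1}{4\pi^2}\int_{-|I|}^{|I|}(|I|-|t|)\,\tr(|\hat\Xi(t)|^2)\,dt<\infty,
\]
paralleling Lemma \ref{interval}\,(ii). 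Conversely, assume $P_IC_\Xi P_I$ is Hilbert--Schmidt with kernel $k\in L^2(I\times I)$. Testing the distributional kernel identity against $\eta(x)\phi(y-x)$ for $\phi\in\cD((-|I|/2,|I|/2))$ and $\eta\in\cD(\R)$ with $\int\eta=1$ and $\mathrm{supp}(\eta)+\mathrm{supp}(\phi)\subset I$, one obtains $|\langle\hat\Xi,\phi\rangle|\le 2\pi\|k\|_{L^2}\|\eta\|_{L^2}\|\phi\|_{L^2}$, so the restriction of $\hat\Xi$ to $(-|I|/2,|I|/2)$ is represented by an $L^2$ function. The Besov condition on $\Xi$ inherited from $\Phi,\Psi$ via the triangle inequality, together with (the proof of) Lemma \ref{Besov} applied entry-wise, provides a measurable $\hat\Xi_0$ with $\int|t|\tr(|\hat\Xi_0(t)|^2)\,dt<\infty$ and $x\hat\Xi(x)=x\hat\Xi_0(x)$ as distributions. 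Since both $x\hat\Xi$ and $x\hat\Xi_0$ are $L^1_{\mathrm{loc}}$ near the origin, $\hat\Xi_0=\hat\Xi$ almost everywhere on $(-|I|/2,|I|/2)$, so $\hat\Xi_0\in L^2$ there; the tail estimate coming from $\int|t|\tr(|\hat\Xi_0|^2)\,dt<\infty$ then yields $\hat\Xi_0\in L^2(\R)$, hence $\Xi_0:=\mathcal{F}^{-1}\hat\Xi_0\in L^2$ by Plancherel. The Fourier transform of $\Xi-\Xi_0$ is supported at $\{0\}$, so $\Xi-\Xi_0$ is a polynomial in $p$; boundedness of $\Xi$ combined with $\Xi_0\in L^2$ forces this polynomial to be a constant matrix $c\in M_N(\C)$. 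Finally, $(c/(2\pi))P_I=P_IC_\Xi P_I-P_IC_{\Xi_0}P_I$ is Hilbert--Schmidt, and since $P_I$ is the identity on the infinite-dimensional $K_I$, one must have $c=0$. Hence $\Xi=\Xi_0\in L^2(\R)\otimes M_N(\C)$.

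The main obstacle is the converse direction, where global $L^2$-integrability of the symbol must be extracted from purely local Hilbert--Schmidt information on a single bounded interval. The key is to combine the kernel representation of the Hilbert--Schmidt operator with Besov regularity of $\Xi$ inherited from admissibility of $\Phi$ and $\Psi$, and then to detect any constant-matrix part of $\Xi$ through its non-Hilbert--Schmidt contribution $(c/(2\pi))P_I$. The ``any iff some'' formulation of the lemma is immediate, since the condition $\int|\Phi-\Psi|^2<\infty$ does not depend on $I$.
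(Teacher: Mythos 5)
Your proof is correct, but after the common first step (Lemma \ref{typeI}) it takes a genuinely different route from the paper's. The paper massages the trace expression from Lemma \ref{typeI} into $\|C_\Phi P_I C_{1-\Psi}\|_\HS^2+\|C_{1-\Phi}P_I C_{\Psi}\|_\HS^2$, uses the Hilbert--Schmidt commutators $[C_\Phi,P_I]$, $[C_{1-\Psi},P_I]$ to replace this by $\|C_{\Phi(1-\Psi)}P_I\|_\HS^2+\|C_{(1-\Phi)\Psi}P_I\|_\HS^2$, and then invokes the exact formula $\|C_\Theta P_I\|_\HS^2=\frac{|I|}{2\pi}\int_\R\tr(|\Theta(p)|^2)dp$ of \cite[Proposition 13.4.1]{Arv} together with the projection identity $\tr(|\Phi(1-\Psi)|^2)+\tr(|(1-\Phi)\Psi|^2)=\tr(|\Phi-\Psi|^2)$; no distribution theory is needed because the operators $C_\Theta P_I$ are uncompressed multiplier-times-multiplication products whose Hilbert--Schmidt norms are computed exactly. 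You instead use the identity $B_I(P_I-A_I)B_I+(P_I-B_I)A_I(P_I-B_I)=(A_I-B_I)^2+(A_I-A_I^2)$ (a sum of positive operators on each side, so the traces split) to reduce quasi-equivalence to the cleaner intermediate criterion that $P_IC_{\Phi-\Psi}P_I$ be Hilbert--Schmidt --- a step that does not yet use that $\Phi,\Psi$ are projections --- and then pay for it by having to prove that the compressed operator $P_IC_\Xi P_I$ is Hilbert--Schmidt iff $\Xi\in L^2$, which forces you through the regular-part machinery of Lemma \ref{Besov} to exclude a Dirac mass at the origin of $\hat\Xi$. Both routes are sound; yours is longer in the analytic half but isolates a reusable fact (for type I quasi-free states on $K_I$, quasi-equivalence is equivalent to Hilbert--Schmidtness of the difference of covariance operators). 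Two small points to tighten: the testing argument with $\eta(x)\phi(y-x)$ only shows that $\hat\Xi$ is \emph{locally} square integrable on $(-|I|/2,|I|/2)$, since the admissible $\|\eta\|_{L^2}$ blows up as the support of $\phi$ exhausts that interval --- but local integrability near $0$ is all you need to identify $\hat\Xi$ with $\hat\Xi_0$ there; and since $x(\hat\Xi-\hat\Xi_0)=0$ as distributions, the singular part is a single scalar-matrix multiple of $\delta_0$ rather than a general finite sum of its derivatives, so $\Xi-\Xi_0$ is a constant matrix outright and your appeal to boundedness of $\Xi$ is superfluous (though harmless).
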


\begin{proof} Thanks to Lemma \ref{typeI}, the two states $\omega_{P_IAP_I}$ and $\omega_{P_IBP_I}$ 
are quasi-equivalent if and only if the following quantity is finite: 
\begin{eqnarray*}
\lefteqn{\tr\big(P_IC_\Phi P_I C_{1-\Psi}P_I C_\Phi P_I+P_IC_{1-\Phi}P_IC_{\Psi}P_IC_{1-\Phi}P_I\big)
} \\
 &=&\tr\big(C_{1-\Psi}(P_IC_\Phi P_I)^2  C_{1-\Psi}+C_{\Psi}(P_IC_{1-\Phi}P_I)^2C_{\Psi}\big).
\end{eqnarray*}
Since $P_IAP_I-(P_IAP_I)^2$ and $P_IBP_I-(P_IBP_I)^2$ are trace class operators 
(see Theorem \ref{qfstate},(iii) and Lemma \ref{interval},(iii)), 
we can replace $(P_IC_\Phi P_I)^2$ with $P_IC_\Phi P_I$ and 
$(P_IC_{1-\Phi}P_I)^2$ with $P_IC_{1-\Phi}P_I$ in the above formula, and we get  
$$\tr\big(C_{1-\Psi}P_IC_\Phi P_IC_{1-\Psi}+C_{\Psi}P_IC_{1-\Phi}P_IC_{\Psi}\big)
=\|C_\Phi P_IC_{1-\Psi}\|_{\HS}^2+\|C_{1-\Phi}P_IC_{\Psi}\|_{\HS}^2.$$
Since the commutators $[C_{1-\Psi},P_I]$ and $[C_\Phi,P_I]$ are Hilbert-Schmidt 
(see Lemma \ref{interval},(iii)), the right-hand side is finite if and only if 
$$\|C_{\Phi(1-\Psi)}P_I\|_{\HS}^2+\|C_{(1-\Phi)\Psi}P_I\|_{\HS}^2$$
is finite. 
\cite[Proposition 13.4.1]{Arv} shows that this is equal to 
$$\frac{|I|}{2\pi}\int_\R\tr(|\Phi(p)-\Psi(p)|^2)dp,$$
and we get the statement. 
\end{proof}

\begin{proof}[Proof of Theorem \ref{L2}]
Assume that $\Phi-\Psi$ is square integrable. 
Let $A=T_\Phi$ and $B=T_\Psi$. 
We will apply Theorem \ref{isomorphic} to $E=\cE_\Phi$ and $F=\cE_\Psi$, and show that $\alpha^\Phi$ and $\alpha^\Psi$ are cocycle conjugate. 

Thanks to Lemma \ref{local equivalence}, the two representations $\pi_A$ and $\pi_B$ are quasi-equivalent 
when they are restricted to $\fA(K_1)$. 
This implies that there exists an isomorphism $\rho_0$ from $\pi_A(\fA(K_1))''$ onto 
$\pi_B(\fA(K_1))''$ satisfying $\rho_0(\pi_A(a(f)))=\pi_B(a(f))$ for $\forall f\in K_1$. 
Since $\rho_0$ preserves the grading, we may assume $\rho_0(R^A_{K_1})=R^B_{K_1}$ by replacing $R^B_{K_1}$ with 
$-R^B_{K_1}$ if necessary. 
We may also assume $R^A=R^A_{K_1}R^A_{K_{(1,\infty)}}$ and $R^B=R^B_{K_1}R^B_{K_{(1,\infty)}}$.   

We claim that $\rho_0$ extends to an isomorphism $\rho_1$ from $(\pi_A(\fA(K_1))\cup\{R^A\})''$ onto $(\pi_B(\fA(K_1))\cup\{R^B\})''$ 
satisfying $\rho_1(R^A)=R^B$. 
Indeed, since $R^A_{K_{(1,\infty)}}$ commutes with $\pi_A(\fA(K_1))$, we have 
$$(\pi_A(\fA(K_1))\cup\{R^A\})''=\frac{1+R^A_{K_{(1,\infty)}}}{2}\pi_A(\fA(K_1))''\oplus \frac{1-R^A_{K_{(1,\infty)}}}{2} 
\pi_A(\fA(K_1))''.$$
For the same reason, 
$$(\pi_B(\fA(K_1))\cup\{R^B\})''=\frac{1+R^B_{K_{(1,\infty)}}}{2}\pi_B(\fA(K_1))''\oplus \frac{1-R^B_{K_{(1,\infty)}}}{2} 
\pi_B(\fA(K_1))'',$$
and so $\rho_0$ extends to $\rho_1$ satisfying $\rho_1(R^A_{K_{(1,\infty)}})=\rho_1(R^B_{K_{(1,\infty)}})$. 
In consequence, we have $\rho_1(R^A)=R^B$. 

Let $\rho$ be the restriction of $\rho_1$ to $\cM_A\cap \alpha^\Phi_1(\cM_A)'$, which is identified with $B(\cE_\Phi(1))$. 
Thanks to Lemma \ref{commutant},(i), it is generated by $\{\pi_A(a(f))R^A;\; f\in K_1\}.$   
Then the image of $\rho$ is generated by $\{\pi_B(a(f))R^B;\; f\in K_1\},$
and so it is $\cM_B\cap \alpha^{\Psi}_1(\cM_B)'$, which is identified with $B(\cE_\Psi(1))$. 
In the same way, we can see that $\rho$ satisfies $\rho(\cA^\Phi(0,s))=\cA^\Psi(0,s)$ for any $0\leq s\leq 1$. 
Thus we get the statement from Theorem \ref{isomorphic}. 
\end{proof}

Now we start the proof of the implication (ii) $\Rightarrow$ (iii) in Theorem \ref{dichotomy}. 
Recall that $\gamma$ is the grading automorphism $\gamma(\pi_A(a(f)))=-\pi_A(a(f))$. 

\begin{lemma}\label{even unit} Let $\Phi\in L^\infty(\R)\otimes M_N(\C)$ be an admissible symbol. 
If $\alpha^\Phi$ is spatial, then there exists a unit $V=\{V_t\}_{t>0}$ for $\alpha^{\Phi}$ satisfying 
$\gamma(V_t)=V_t$ for $\forall t>0$. 
\end{lemma}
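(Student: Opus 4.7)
The plan is to leverage the $\Z/2\Z$-symmetry on the units of $\cE_\Phi$ induced by $\gamma$, combined with Arveson's classification of spatial product systems, to produce a $\gamma$-fixed unit via an affine midpoint argument.

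First, I would verify that $\gamma$ induces a period-two automorphism of the product system $\cE_\Phi$. The defining formulas $\alpha^\Phi_t(\pi_A(a(f)))=\pi_A(a(S_tf))$ and $\gamma(\pi_A(a(f)))=-\pi_A(a(f))$ imply that $\gamma\circ\alpha^\Phi_t=\alpha^\Phi_t\circ\gamma$ on $\cM_A$. Consequently $\gamma$ preserves each Hilbert space
$$\cE_\Phi(t)=\{T\in\cM_A\;:\;\alpha^\Phi_t(X)T=TX,\ \forall X\in\cM_A\},$$
and the restriction $\gamma_t$ is a $\C$-linear unitary, since the inner product $\inpr{T}{S}1=S^*T$ is preserved by the $*$-automorphism $\gamma$ fixing $1$. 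These $\gamma_t$ respect the product structure via $\gamma_s(T)\gamma_t(S)=\gamma_{s+t}(TS)$, so $\gamma$ acts on the set of units $\cU_{\cE_\Phi}$ as an involution: if $U=\{U_t\}$ is a unit then so is $\gamma(U):=\{\gamma(U_t)\}$.

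Second, I would appeal to Arveson's classification. Since $\alpha^\Phi$ is spatial, the sub-product-system of $\cE_\Phi$ generated by units is isomorphic to a CCR flow of some index $k\in\{0,1,\dots,\infty\}$, and correspondingly $\cU_{\cE_\Phi}$ is parametrized by pairs $(c,f)\in\C\times K$ with $\dim K=k$, via $U^{c,f}_t=e^{ct}\EXP(f\chi_{(0,t)})\cdot U^0_t$ in the Fock realization, after fixing a reference unit $U^0$. This endows $\cU_{\cE_\Phi}$ with the structure of a complex affine space. Arveson's description of automorphisms of type I CCR flows as combinations of Bogoliubov transformations of $K$, translations by vectors of $K$, and scalar exponential cocycles then implies that any product system automorphism, and in particular $\gamma$, acts on $\C\times K$ by a complex affine map $\sigma$.

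Since $\sigma^2=\id$, the map $\sigma$ is an affine involution, which always has a fixed point, namely the midpoint $\tfrac{1}{2}(x+\sigma(x))$ of any point $x$ and its image. The unit $V$ corresponding to such a fixed point satisfies $\gamma(V_t)=V_t$ for all $t>0$, yielding the required even unit. The main obstacle I foresee is the rigorous verification that $\gamma$ acts affinely on the parameter space, which requires careful use of Arveson's classification of automorphisms of type I CCR flows; granting this, the midpoint argument concludes immediately.
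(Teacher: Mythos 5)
Your proposal is correct and follows essentially the same route as the paper: reduce to the classification of type I product systems, use Arveson's identification of $\Aut(E)$ with $\R\times L\times\cU(L)$ acting affinely on the parameter space $\C\times L$ of units, and produce a fixed point of the induced involution. The paper computes the fixed point explicitly from the form $(0,\xi,U)$ of an order-two element, whereas you use the general midpoint argument for affine involutions; this is only a cosmetic difference.
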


\begin{proof} Since $\gamma$ commutes with $\alpha^\Phi_t$ for $\forall t>0$, 
it induces an automorphism of the corresponding product system $\cE_\Phi$. 
When $\cE_\Phi$ is of type II$_0$, it is easy to show the statement, and so we assume the index of $\cE_\Phi$ is not $0$. 
Let $E$ be the subproduct system of $\cE_{\alpha^{\Phi}}$ generated by the units, and let $\beta$ be the automorphism of 
$E$ induced by $\gamma$. 
Then the statement follows from the following claim: 
for any period two automorphism $\beta$ of any type I product system $E$, there exists a unit of $E$ fixed by $\beta$. 
Note that the type I product systems are completely classified, and the action of $\Aut(E)$ on the set of units $\cU_E$ 
is well-known (see \cite[Section 3.8]{Ar}). 

Let $L$ be a Hilbert space whose dimension is the same as the index of $E$, and let $\cU(L)$ be the unitary group of $L$.  
Then $\Aut(E)$ is identified with $G_L=\R\times L \times \cU(L)$ having the group operation  
$$(\lambda,\xi,U)(\mu,\eta,V)=(\lambda+\mu+\IM\inpr{\xi}{U\eta},\xi+U\eta,UV).$$
The set $\cU_E$ together with the $\Aut(E)$-action on it is identified with $\C\times L$ with the $G_L$-action 
$$(\lambda,\xi,U)\cdot(a,\eta)=(a+i\lambda-\frac{\|\xi\|^2}{2}-\inpr{U\eta}{\xi},\xi+U\eta).$$
Any element $g\in G_L$ of order two is of the form $g=(0,\xi,U)$ with $U^2=1$ and $U\xi=-\xi$. 
Now we can see that $(0,\frac{1}{2}\xi)\in \C\times L$ is fixed by $g$. 
\end{proof}

The following lemma is a slight generalization of \cite[Lemma 4.5]{Po1} and \cite[Lemma 13.6.5]{Arv}. 
For later use, we will show a little stronger statement than we need in this section. 

\begin{lemma}\label{making a state} Let $\Phi\in L^\infty(\R)\otimes M_N(\C)$ be an admissible symbol and $A=T_\Phi$. 
If $V\in \cE_\Phi(t)$ is a normalized vector satisfying $\gamma(V)=\pm V$, 
then there exists a pure $\gamma$-invariant state $\varphi$ of $\fA(K_t)$ such that 
$V^*\pi_A(X)V=\varphi(X)1$ for any $X\in \fA(K_t)$. 
\end{lemma}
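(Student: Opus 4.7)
The plan is to work inside the relative commutant $M := \cM_A \cap \alpha^\Phi_t(\cM_A)' = \pi_A^t(\fA(K_t))''$ (Lemma \ref{interval},(i)). For every $W \in M$ one has $\alpha^\Phi_t(Y)(WV) = W\alpha^\Phi_t(Y)V = WVY$ for each $Y \in \cM_A$, so $WV \in \cE_\Phi(t)$; the product-system inner product then forces $V^*WV \in \C\cdot 1$, and this defines a normal state $\tilde{\varphi}$ on $M$ by $V^*WV = \tilde{\varphi}(W)\cdot 1$.

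Next I would transfer this identity from $\pi_A^t$ to $\pi_A$ by splitting $\fA(K_t)$ into even and odd parts. For even $X$, $\pi_A(X) = \pi_A^t(X)$ lies in $M$, so $V^*\pi_A(X)V = \tilde{\varphi}(\pi_A^t(X))\cdot 1$. For odd $X$, the hypothesis $\gamma(V) = \epsilon V$ with $\epsilon \in \{\pm 1\}$ translates into $R^A V = \epsilon V R^A$; applying $\gamma = \Ad R^A$ to the scalar $V^*\pi_A^t(X)V$ together with $\gamma(\pi_A^t(X)) = -\pi_A^t(X)$ yields $\tilde{\varphi}(\pi_A^t(X)) = -\tilde{\varphi}(\pi_A^t(X))$, so $\tilde{\varphi}$ vanishes on the odd part of $\pi_A^t(\fA(K_t))$. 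Combining this with $\pi_A^t(X) = i\pi_A(X)R^A$ on odd elements and $(R^A)^2 = 1$ forces $V^*\pi_A(X)V = 0$ for odd $X$. Setting $\varphi(X) := \tilde{\varphi}(\pi_A^t(X))$ therefore yields $V^*\pi_A(X)V = \varphi(X)\cdot 1$ for every $X \in \fA(K_t)$, and $\varphi$ is a $\gamma$-invariant state (unitality and positivity follow from compression by the isometry $V$, while $\gamma$-invariance follows from the vanishing on odd elements).

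For purity, observe that $p := VV^*$ commutes with $\alpha^\Phi_t(\cM_A)$ via $\alpha^\Phi_t(Y)VV^* = VYV^* = VV^*\alpha^\Phi_t(Y)$, so $p$ lies in $M$, and the scalar identity implies $pWp = \tilde{\varphi}(W)\,p$ for every $W \in M$, whence $pMp = \C p$. Since $M$ is a type I factor, $p$ is a minimal projection, so $\tilde{\varphi}$ is a pure normal state on $M$ whose GNS representation identifies $M$ with $B(H_{\tilde{\varphi}})$ acting irreducibly. As $\pi_A^t(\fA(K_t))$ is $\sigma$-weakly dense in $M$, Kaplansky's density theorem makes its image in $B(H_{\tilde{\varphi}})$ strongly dense, so the GNS representation of $\varphi = \tilde{\varphi}\circ\pi_A^t|_{\fA(K_t)}$ is irreducible and $\varphi$ is pure. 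The main obstacle is the odd case of the scalar identity, where $\pi_A(X)$ itself is not in $M$ and one must carefully convert between $\pi_A$ and $\pi_A^t$ using $\gamma(V) = \pm V$; once that bookkeeping is done, purity reduces to the standard type I fact that a minimal projection yields a pure normal state.
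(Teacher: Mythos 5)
Your proof is correct and takes essentially the same approach as the paper's: the scalar identity rests on the intertwining property of $V$ plus the parity argument from $\gamma(V)=\pm V$ (you reach it via the product-system inner product on $\cE_\Phi(t)$, the paper via a direct commutation computation with CAR monomials, but these are the same calculation), and the even/odd bookkeeping through $\pi_A^t$ is exactly the paper's. Your purity argument --- $VV^*$ is a minimal projection of the type I factor $\cM_A\cap\alpha^\Phi_t(\cM_A)'$ --- is the same fact the paper invokes by saying $\varphi$ is a vector state of the irreducible left-multiplication representation on $\cE_\Phi(t)$.
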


\begin{proof} Throughout the proof, the symbol $a^\dagger(f)$ means either $a(f)$ or $a(f)^*$. 
Let $f_1,f_2, \cdots, f_n\in K_t$, and $X=a^\dagger(f_1)a^\dagger(f_2)\cdots a^\dagger(f_n)$. 
Then for any $g\in K$,  we have 
\begin{eqnarray*}
V^*\pi_A(X)V\pi_A(a^\dagger(g))&=&V^*\pi_A(Xa^\dagger(S_tg))V=(-1)^nV^*\pi_A(a^\dagger(S_tg)X)V\\
&=&(-1)^n\pi_A(a^\dagger(g))V^*\pi_A(X)V.
\end{eqnarray*}
If $n$ is even, this shows that $V^*\pi_A(X)V$ is in the center $Z(\cM_A)$ of $\cM_A$, and so it is a scalar. 
If $n$ if odd, the operator $R^AV^*\pi_A(X)V$ is a scalar for the same reason, and on the other hand, it is an odd 
operator with respect to $\gamma$. 
Thus $V^*\pi_A(X)V=0$, which shows that there exists a $\gamma$-invariant state $\varphi$ such that $V^*\pi_A(X)V=\varphi(X)1$ 
for all $X\in \fA(K_t)$. 

It only remains to show that $\varphi$ is pure. 
Recall that the twisted representation $\pi_A^t$ is defined by $\pi_A^t(a(f))=i\pi_A(a(f))R^A$, and 
$\cM_A\cap \alpha^\Phi_t(\cM_A)'=\pi_A^t(\fA(K_t))''$. 
We denote by $\pi$ the irreducible representation of $\fA(K_t)$ on $\cE_\Phi(t)$ given by 
$\pi(X)=\sigma(\pi_A^t(X))$ on $\cE_\Phi(t)$, where $\sigma(Y)$ denotes the left multiplication of $Y$. 
Then the pure state of $\fA(K_t)$ given by $X\mapsto \inpr{\pi(X)V}{V}=V^*\pi_A^t(X)V$ 
coincides with $\varphi$ because both $\varphi$ and this state are $\gamma$-invariant, 
and $\pi_A$ and $\pi_A^t$ coincide on the even part of $\fA(K_t)$. 
\end{proof}

\begin{proof}[Proof of (ii) $\Rightarrow (iii)$ in Theorem \ref{dichotomy}] 
Let $\Phi\in L^\infty(\R)\otimes M_N(\C)$ be an admissible symbol, and let $A=T_\Phi$. 
Assume that $\alpha^\Phi$ is spatial. 
Then Lemma \ref{even unit} shows that there exists a normalized unit $V=\{V_t\}_{t\geq 0}$ satisfying $\gamma(V_t)=V_t$ for all $t$. 
Let $\varphi$ be the state of $\fA(K_1)$ defined by 
$\varphi(X)=\inpr{\pi_A(X)V_1\Omega_A}{V_1\Omega_A}$ for $X\in \fA(K_1)$, and let $B\in B(K_1)$ the covariance operator for $\varphi$. 
Then Lemma \ref{making a state} shows that $V_t^*\pi_A(a(f))V_t=0$ for any $f\in K_t$.  
We claim that there exists a positive contraction $Q\in L^\infty((0,1))\otimes M_N(\C)$ such that $B$ is the multiplication operator of $Q$. 
To prove the claim, it suffices to show that $B$ commutes with $P_t$ for all $0< t<1$. 
Indeed, if $f\in K_t$ and $g\in K_{(t,1)}$, then 
$$V_1^*\pi_A(a(f)a(g)^*)V_1=V_{1-t}^*V_t^*\pi_A(a(f))V_t\pi_A(a(S^*_tg)^*)V_{1-t}=0.$$
Thus we get $P_{(t,1)}BP_t=0$, and the claim is shown. 

Note that $\varphi$ is quasi-equivalent to $\omega_{P_1AP_1}$. 
We claim that $B$ is a projection. 
Let $\K(K_1)$ be the set of compact operators of $K_1$, and let $q:B(K_1)\to B(K_1)/\K(K_1)$ be the quotient map. 
Then thanks to Lemma \ref{covariance operator},(ii), we have $q(P_1AP_1)=q(B)$. 
Since $\omega_{P_1AP_1}$ is a type I state, we have $q(P_1AP_1)^2=q(P_1AP_1)$, and so $B-B^2$ is a compact operator. 
This is possible only if $Q(x)$ is a projection for almost every $x\in (0,1)$, and so $B$ is a projection. 

Since $B$ is a projection, Lemma \ref{covariance operator},(i) implies $\varphi=\omega_B$. 
Since $\omega_{P_1AP_1}$ and $\omega_B$ are quasi-equivalent, Theorem \ref{qfstate},(v) implies 
$$\|C_\Phi(P_1-B)\|_\HS^2+\|C_{1-\Phi}B\|_\HS^2=\tr\big((P_1-B)C_\Phi(P_1-B)+BC_{1-\Phi}B\big)<\infty.$$
A similar computation as in \cite[Proposition 13.4.1]{Arv} shows that the left-hand side is  
$$\frac{1}{2\pi}\int_0^1\int_\R\tr(|\Phi(p)-Q(x)|^2)dpdx.$$ 
Thus the integral 
$$\int_\R\tr(|\Phi(p)-Q(x)|^2)dp$$ 
is finite for almost every $x\in (0,1)$, and the proof is finished. 
\end{proof}

\begin{example}\label{nonconverging}
Let $\theta(p)$ be a real smooth function satisfying $\theta(-p)=\theta(p)$ for all $ p\in \R$ and 
$\theta(p)=\log(\log |p|)$ (or $\theta(p)=\log^\alpha|p|$ with $0<\alpha<1/2$) for large $|p|$. 
Then $\Phi$ associated with $\theta$ in Example \ref{typical examples} is an admissible symbol without having 
limit at infinity. 
While Theorem \ref{P-A} does not apply to such $\Phi$, now we know from Theorem \ref{dichotomy} that the Toeplitz CAR flow $\alpha^{\Phi}$ is 
of type III. 
\end{example}

\section{Type I factorizations associated with Toeplitz CAR flows}\label{factorization}
Thanks to Theorem \ref{dichotomy}, we have a complete understanding of spatial Toeplitz CAR flows now. 
The purpose of this section is to calculate the invariant we introduced in Subsection \ref{invariant for type III} 
in the case of type III Toeplitz CAR flows. 

\begin{theorem} \label{CABATIF} Let $\Phi\in L^\infty(\R)\otimes M_N(\C)$ be an admissible symbol, and let $\{a_n\}_{n=0}^\infty$ be a 
strictly increasing sequence of non-negative numbers such that  $a_0=0$ and it converges to a finite number $a$. 
Let $I_n=(a_n,a_{n+1})$ and $O=\bigcup_{n=0}^\infty I_{2n}$. 
\begin{itemize}
\item [(i)] If 
$$\sum_{n=0}^\infty\|(1_{\tK}-P_{I_n})C_\Phi P_{I_n}\|_\HS^2<\infty,$$
then $\{\cA^\Phi_a(I_n)\}_{n=1}^\infty$ is a CABATIF. 
\item [(ii)]  If $\{\cA^\Phi_a(I_n)\}_{n=0}^\infty$ is a CABATIF, then 
$\|(1_{\tK}-P_O)C_\Phi P_O\|_\HS^2<\infty$.  
\end{itemize}
\end{theorem}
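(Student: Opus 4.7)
The plan is to reduce both parts to the following single statement: for every subset $\Gamma\subset\N$, with $O_\Gamma:=\bigcup_{n\in\Gamma}I_n$, the von Neumann algebra $\bigvee_{n\in\Gamma}\cA^\Phi_a(I_n)$ is a type I factor if and only if
$$\|(1_{\tK}-P_{O_\Gamma})C_\Phi P_{O_\Gamma}\|_{\HS}^2<\infty.$$
Under the identification $B(\cE_\Phi(a))\cong\cM_A\cap\alpha_a^\Phi(\cM_A)'=\pi_A^t(\fA(K_a))''$ given by Lemma \ref{interval},(i), each local subfactor $\cA^\Phi_a(I_n)=\alpha_{a_n}^\Phi(\cM_A\cap\alpha_{a_{n+1}-a_n}^\Phi(\cM_A)')$ sits inside $\pi_A^t(\fA(K_a))''$, and one checks that the von Neumann join of the shifted pieces over $\Gamma$ equals $\pi_A^t(\fA(K_{O_\Gamma}))''$. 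By Lemma \ref{commutant},(ii) this algebra is unitarily equivalent to $\pi_A(\fA(K_{O_\Gamma}))''$; by Theorem \ref{qfstate},(ii) the latter is a type I factor iff the quasi-free state $\omega_{P_{O_\Gamma}AP_{O_\Gamma}}$ is of type I; and by Theorem \ref{qfstate},(iii) this is equivalent to the finiteness of $\tr(P_{O_\Gamma}AP_{O_\Gamma}-(P_{O_\Gamma}AP_{O_\Gamma})^2)$, which using $C_\Phi=C_\Phi^*=C_\Phi^2$ equals the displayed Hilbert--Schmidt norm squared.

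For part (i), assume the summability hypothesis. For any $\Gamma\subset\N$, I would expand $P_{O_\Gamma}=\sum_{n\in\Gamma}P_{I_n}$ (the $I_n$ being pairwise disjoint) and use the operator inequality $1_{\tK}-P_{O_\Gamma}\leq 1_{\tK}-P_{I_n}$ for each $n\in\Gamma$ to obtain
$$\|(1_{\tK}-P_{O_\Gamma})C_\Phi P_{O_\Gamma}\|_\HS^2=\sum_{n\in\Gamma}\|(1_{\tK}-P_{O_\Gamma})C_\Phi P_{I_n}\|_\HS^2\leq\sum_{n\in\Gamma}\|(1_{\tK}-P_{I_n})C_\Phi P_{I_n}\|_\HS^2<\infty.$$
By the reduction above, every such join is then a type I factor. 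Combined with Lemma \ref{interval},(iii) (each individual $\cA^\Phi_a(I_n)$ is already a type I factor), the pairwise commutation provided by Lemma \ref{commutant},(i), and $\bigvee_n\cA^\Phi_a(I_n)=B(\cE_\Phi(a))$ from \cite[Proposition 4.2.1]{Arv} (cf.\ Example \ref{invariant for E}), this shows that $\{\cA^\Phi_a(I_n)\}$ is a CABATIF.

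For part (ii), I would simply invoke the CABATIF hypothesis for the subset $\Gamma=\{2n:n\geq 0\}$: by assumption $\bigvee_{n\geq 0}\cA^\Phi_a(I_{2n})$ is a type I factor, so the reduction above immediately yields $\|(1_{\tK}-P_O)C_\Phi P_O\|_\HS^2<\infty$.

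The hard part will be the identification $\bigvee_{n\in\Gamma}\cA^\Phi_a(I_n)\cong\pi_A^t(\fA(K_{O_\Gamma}))''$. Each shift $\alpha_{a_n}^\Phi$ replaces $R^A$ by $\alpha_{a_n}^\Phi(R^A)$, which in general agrees with $R^A_{K_{(a_n,\infty)}}$ only up to a factor lying in the $\gamma$-even part of $\pi_A(\fA(K_{a_n}))''$; one has to verify that these local discrepancies are absorbed against the corresponding pieces $R^A_{K_{a_n}}$ in the decomposition $R^A=\epsilon\, R^A_{K_{a_n}}R^A_{K_{(a_n,\infty)}}$ when the shifted twisted algebras are joined together, yielding exactly the untwisted $\pi_A^t(\fA(K_{O_\Gamma}))''$. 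Once this book-keeping is in place, the rest of the proof is a direct application of the quasi-free state theory summarized in Section \ref{pre}.
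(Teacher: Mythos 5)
Your proof hinges entirely on the ``reduction'' $\bigvee_{n\in\Gamma}\cA^\Phi_a(I_n)=\pi_A^t(\fA(K_{O_\Gamma}))''$, and this identification is false whenever $O_\Gamma$ has gaps; the local discrepancies you hope get ``absorbed'' do not. Concretely, $\cA^\Phi_a(I_{n})=\alpha^\Phi_{a_{n}}(\cM_A\cap\alpha^\Phi_{a_{n+1}-a_{n}}(\cM_A)')$ is generated by $\{\pi_A^t(a(f))R^A_{K_{a_{n}}};\,f\in K_{I_{n}}\}$, and after telescoping the join over $\Gamma=\{n_0<n_1<\cdots\}$ is generated by operators of the form $\pi_A(a(f))R^A_{K_{J_m}}$, $f\in K_{I_{n_m}}$, where $J_m$ is the union of the gaps preceding $I_{n_m}$. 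The self-adjoint unitaries $R^A_{K_{J_m}}$ are supported on the \emph{complement} of $O_\Gamma$, do not belong to the join, and cannot be cancelled from the generators (only products of an even number of generators lose them). They genuinely alter the cross-interval two-point functions: for $f\in K_{I_{n_m}}$, $g\in K_{I_{n_l}}$ the relevant expectation acquires an insertion of $R^A_{K_{J_l\setminus J_m}}$, so the state of the resulting representation of $\fA(K_{O_\Gamma})$ is \emph{not} $\omega_{P_{O_\Gamma}AP_{O_\Gamma}}$. A structural sanity check: if your iff were correct, (i) and (ii) would merge into a single equivalence with matching conditions, whereas the hypothesis of (i) (summability of \emph{all} off-diagonal blocks $\|(1_{\tK}-P_{I_n})C_\Phi P_{I_n}\|_\HS^2$) is strictly stronger than the conclusion of (ii); the asymmetry of the theorem is precisely the trace of the obstruction you are waving away. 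Your Hilbert--Schmidt inequality $\|(1_{\tK}-P_{O_\Gamma})C_\Phi P_{O_\Gamma}\|_\HS^2\le\sum_{n\in\Gamma}\|(1_{\tK}-P_{I_n})C_\Phi P_{I_n}\|_\HS^2$ is correct but does not repair this.

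What is actually needed is quite different on each side. For (i), the paper first replaces $A$ by the block-diagonal operator $B=\sum_nP_{I_n}AP_{I_n}+P_{(a,\infty)}AP_{(a,\infty)}$; the hypothesis of (i) is exactly equivalent to $\omega_B$ being type I \emph{and} to $\omega_A\sim\omega_B$ (Lemma \ref{qe and type I}), and in the $B$-picture the gap unitaries $R^B_{K_{J_m}}$ are harmless because the cross-blocks of $B$ vanish, so the join is governed by $P_{O_\Gamma}BP_{O_\Gamma}$ --- not by $P_{O_\Gamma}AP_{O_\Gamma}$ as you claim. For (ii), the converse direction of your ``reduction'' is the hard implication and requires the Araki--Woods machinery: the CABATIF hypothesis produces, via Theorem \ref{ArakiWoods} and Lemma \ref{fixed product vector}, a factorized vector $V=V_0V_1\cdots V_nW_n$ with $\gamma(V_n)=\pm V_n$; Lemma \ref{making a state} then yields pure $\gamma$-invariant states $\varphi_n$ on each piece, and Lemma \ref{product state} shows the resulting product state of $\fA(K_O)$ is pure, whence $\pi_A|_{\fA(K_O)}$ is type I. None of this appears in your proposal, so part (ii) is essentially unproven as written.
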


We prepare a few facts used in the proof of (i) first. 

\begin{lemma}\label{PQ} Let $H$ be a Hilbert space, and let $P,Q\in B(H)$ be projections. 
Then  
$$\|(1-P)QP\|_\HS=\|(1-Q)PQ\|_\HS.$$
\end{lemma}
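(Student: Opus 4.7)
The plan is to rewrite both sides as traces of continuous functions of $X^*X$ and $XX^*$ respectively, where $X:=QP$, and then invoke the standard fact that such traces agree.

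Concretely, $X^*X = PQP$ and $XX^* = QPQ$, both of which are positive contractions. A direct expansion using $P^2=P$ and $Q^2=Q$ shows
\[
\|(1-P)QP\|_\HS^2 = \tr\bigl(PQ(1-P)QP\bigr) = \tr\bigl(X^*X(1-X^*X)\bigr),
\]
and symmetrically
\[
\|(1-Q)PQ\|_\HS^2 = \tr\bigl(QP(1-Q)PQ\bigr) = \tr\bigl(XX^*(1-XX^*)\bigr).
\]
The operators $X^*X(1-X^*X)$ and $XX^*(1-XX^*)$ are positive (each being the product of two commuting positive contractions with their complements), so their traces are well defined in $[0,\infty]$ regardless of whether $\tr(PQP)$, $\tr((PQP)^2)$, etc., are individually finite.

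It then suffices to observe that for any bounded operator $X$ on a Hilbert space and any continuous $f\colon[0,\|X\|^2]\to[0,\infty)$ with $f(0)=0$, one has $\tr f(X^*X)=\tr f(XX^*)$ in $[0,\infty]$. This is immediate from the singular value decomposition of $X$: writing $X=\sum_n s_n\,|e'_n\rangle\langle e_n|$ with orthonormal $\{e_n\}$, $\{e'_n\}$ and singular values $s_n\ge 0$, both $f(X^*X)$ and $f(XX^*)$ carry the spectrum $\{f(s_n^2)\}$ with matching multiplicities on the nonzero part, while $f$ vanishes on $\Ker X$ and $\Ker X^*$. Applying this to $f(\lambda)=\lambda(1-\lambda)$ gives the desired identity.

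The only point needing care is that the formal expression $\tr(PQP)-\tr((PQP)^2)$ need not make sense as a difference of finite numbers; the manipulation must be read at the level of the positive operator $X^*X(1-X^*X)$, whose trace is unambiguous in $[0,\infty]$. I expect no further obstacle.
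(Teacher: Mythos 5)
Your proof is correct, but it takes a genuinely different route from the paper's. The paper invokes the Halmos two-projections decomposition, writing $H=H_1\oplus H_2\oplus H_3\oplus H_4\otimes\C^2\oplus H_5$ with $P$ and $Q$ in normal form, and computes both Hilbert--Schmidt norms explicitly, each reducing to $\tr(c^2s^2)$. You instead reduce both sides to $\tr f(X^*X)$ and $\tr f(XX^*)$ for $X=QP$ and $f(\lambda)=\lambda(1-\lambda)$, and appeal to the general principle that these traces agree for continuous $f\ge 0$ with $f(0)=0$. Your algebra checks out: $\|(1-P)QP\|_\HS^2=\tr(PQP-(PQP)^2)$ and $X^*X=PQP$, $XX^*=QPQ$, and your care about reading the difference at the level of the positive operator $X^*X(1-X^*X)$ (rather than as a difference of possibly infinite traces) is exactly the right precaution. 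The one imprecision is your justification of the key principle via a singular value decomposition $X=\sum_n s_n|e'_n\rangle\langle e_n|$: such a discrete expansion exists only for compact $X$, and $QP$ need not be compact. The fix is standard: use the polar decomposition $X=V|X|$, note $f(XX^*)=Vf(X^*X)V^*$ for any such $f$ (check on polynomials without constant term), and use that $f(X^*X)$ is supported on $(\Ker X)^\perp$, the initial space of $V$, so the traces in $[0,\infty]$ coincide. With that repair your argument is complete; it is arguably cleaner and more general than the paper's explicit matrix computation, while the paper's version has the virtue of being entirely elementary and self-contained.
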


\begin{proof} There is a decomposition of $H$ into closed subspaces (each subspace could possibly be $\{0\}$) 
$$H=H_1\oplus H_2\oplus H_3\oplus H_4\otimes \C^2\oplus H_5$$
such that the two projections are expressed as 
$$P=1_{H_1}\oplus 1_{H_2}\oplus 0\oplus 
\left(\begin{array}{cc}
1_{H_4} &0  \\
0 &0 
\end{array}\right)
\oplus 0,
$$
$$Q=1_{H_1}\oplus 0\oplus 1_{H_3}\oplus 
\left(\begin{array}{cc}
c^2 &cs  \\
cs &s^2 
\end{array}\right)
\oplus 0,
$$
where $c$ and $s$ are non-singular positive contractions satisfying $c^2+s^2=1_{H_4}$ (see \cite[p.308]{Ta}). 
Then we have 
$$\|(1-P)QP\|_\HS^2
=\|\left(
\begin{array}{cc}
0 &0  \\
cs &0 
\end{array}
\right)\|_\HS^2
=\tr(c^2s^2),$$
\begin{eqnarray*}
\|(1-Q)PQ\|_\HS^2&=&
\|\left(
\begin{array}{cc}
s^2 &-cs  \\
-cs &c^2 
\end{array}
\right)
\left(
\begin{array}{cc}
1 &0  \\
0 &0 
\end{array}
\right)
\left(
\begin{array}{cc}
c^2 &cs  \\
cs &s^2 
\end{array}
\right)
\|_\HS^2 \\
 &=&\|\left(
\begin{array}{cc}
c^2s^2 &cs^3  \\
-c^3s &-c^2s^2 
\end{array}
\right)\|_\HS^2
=\tr\big(2c^4s^4+c^2s^6+c^6s^2\big)\\
 &=&\tr\big(c^2s^2(c^2+s^2)^2\big)
 =\tr(c^2s^2).  
\end{eqnarray*}
\end{proof}

\begin{lemma}\label{qe and type I} Let the notation be as in Theorem \ref{CABATIF}, and let $A=T_\Phi$. 
We set 
$$B=\sum_{n=0}^\infty P_{I_n}AP_{I_n}+P_{(a,\infty)}AP_{(a,\infty)}.$$
Then the following conditions are equivalent: 
\begin{itemize}
\item [(i)] The assumption of Theorem \ref{CABATIF},(i) holds. 
\item [(ii)] The quasi-free state $\omega_B$ is of type I. 
\item [(iii)] The two quasi-free states $\omega_A$ and $\omega_B$ are quasi-equivalent. 
\end{itemize}
\end{lemma}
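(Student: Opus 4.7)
The plan is to exploit the block structure of $B$ with respect to the orthogonal decomposition $K=\bigoplus_{n\ge 0}K_{I_n}\oplus K_{(a,\infty)}$. Setting $Q_n:=P_{I_n}$ for $n\ge 0$ and $Q_\infty:=P_{(a,\infty)}$, we have $B=\sum_{n\in\{0,1,2,\dots\}\cup\{\infty\}}Q_nAQ_n$, so $B$ is simply the block diagonalisation of $A$, and $A-B=\sum_{m\ne n}Q_mC_\Phi Q_n$ is its off-diagonal part.

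For the equivalence of (i) and (ii), I would invoke Theorem \ref{qfstate}(iii), which recasts (ii) as $\tr(B-B^2)<\infty$. The block-wise identity
$$Q_nAQ_n-(Q_nAQ_n)^2=Q_n(A-A^2)Q_n+Q_nA(1_K-Q_n)AQ_n$$
together with $\sum_n Q_n=P_+=1_K$ yields
$$\tr(B-B^2)=\tr(A-A^2)+\sum_{n\ge 0}\|(1_K-Q_n)AQ_n\|_\HS^2+\|(1_K-Q_\infty)AQ_\infty\|_\HS^2.$$
The first term is finite by admissibility and the last by Lemma \ref{interval}(iii). Since $A=P_+C_\Phi P_+$ and $Q_n\le P_+$, the orthogonal splitting $1_{\tK}-Q_n=(1_{\tK}-P_+)+(P_+-Q_n)$ gives
$$\|(1_{\tK}-Q_n)C_\Phi Q_n\|_\HS^2=\|H_\Phi Q_n\|_\HS^2+\|(1_K-Q_n)AQ_n\|_\HS^2,$$
and $\sum_{n\ge 0}\|H_\Phi Q_n\|_\HS^2\le\|H_\Phi\|_\HS^2<\infty$ by admissibility. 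Hence (i) is equivalent to $\tr(B-B^2)<\infty$, i.e.\ to (ii).

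For the equivalence with (iii), the implication (iii)$\Rightarrow$(ii) follows from Theorem \ref{qfstate}(iv): since $\omega_A$ is type I, $A^{1/2}(1-A)^{1/2}$ is Hilbert--Schmidt, and under quasi-equivalence $B^{1/2}(1-B)^{1/2}$ differs from it by a Hilbert--Schmidt operator, giving $\tr(B-B^2)<\infty$. Conversely, assuming (ii), Lemma \ref{typeI} reformulates (iii) as $\tr\bigl(B(1-A)B+(1-B)A(1-B)\bigr)<\infty$. The key simplification is the purely algebraic identity
$$B(1-A)B+(1-B)A(1-B)=(A-B)^2+(A-A^2),$$
so after taking traces the condition becomes $\|A-B\|_\HS<\infty$, i.e.\ $\sum_{m\ne n}\|Q_mC_\Phi Q_n\|_\HS^2<\infty$. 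Separating the pairs involving $Q_\infty$ (which contribute a finite sum by Lemma \ref{interval}(iii)) and comparing with the decomposition of $\|(1_{\tK}-Q_n)C_\Phi Q_n\|_\HS^2$ above, this is equivalent to (i).

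The main obstacle is bookkeeping: in both $\tr(B-B^2)$ and $\|A-B\|_\HS^2$ one must separate the pieces that are automatically finite (the Hankel contribution $H_\Phi$ and the commutator $[C_\Phi,Q_\infty]$, controlled by admissibility and Lemma \ref{interval}(iii)) from the genuinely non-trivial off-diagonal sum $\sum_{m\ne n,\,m,n\ge 0}\|Q_mC_\Phi Q_n\|_\HS^2$ that encodes (i). The only substantive non-bookkeeping step is the algebraic identity $B(1-A)B+(1-B)A(1-B)=(A-B)^2+(A-A^2)$, which turns the type I criterion of Lemma \ref{typeI} into a clean Hilbert--Schmidt statement about $A-B$.
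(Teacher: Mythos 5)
Your proof is correct, and the interesting half of it takes a genuinely different route from the paper. For (i)$\Leftrightarrow$(ii) you and the paper do essentially the same thing (the paper dispatches it in one sentence via Theorem \ref{qfstate},(iii) and Lemma \ref{interval},(iii); your blockwise computation of $\tr(B-B^2)$ is the honest expansion of that sentence, and the orthogonal splitting $1_{\tK}-Q_n=(1_{\tK}-P_+)+(P_+-Q_n)$ correctly isolates the Hankel contribution). For the equivalence with (iii), the paper applies Lemma \ref{typeI} with $A$ in the type I role (legitimate unconditionally, since $\tr(A-A^2)<\infty$ by admissibility), expands $\tr\bigl(A(1-B)A+(1-A)B(1-A)\bigr)$ using the Fourier multipliers $C_\Phi$, $C_{1-\Phi}$, strips off $P_+$ by a Hankel estimate, and then needs Lemma \ref{PQ} (the identity $\|(1-P)QP\|_\HS=\|(1-Q)PQ\|_\HS$ for projections) to convert $\|C_{1-\Phi}P_{I_n}C_\Phi\|_\HS$ into $\|(1_{\tK}-P_{I_n})C_\Phi P_{I_n}\|_\HS$. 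You instead apply Lemma \ref{typeI} with $B$ in the type I role, which forces you to first prove (iii)$\Rightarrow$(ii) separately (your argument via Theorem \ref{qfstate},(iv) works; even more simply, quasi-equivalence preserves the type of the generated factor), and then you use the identity
$$B(1-A)B+(1-B)A(1-B)=(A-B)^2+(A-A^2),$$
which checks out (both sides equal $A+B^2-AB-BA$), together with positivity of all four summands, to reduce the whole criterion to $\|A-B\|_\HS<\infty$. This bypasses Lemma \ref{PQ} and the multiplier bookkeeping entirely, since $\|A-B\|_\HS^2=\sum_{m\neq n}\|Q_mC_\Phi Q_n\|_\HS^2$ compares directly with (i) after discarding the finitely many contributions involving $Q_\infty$ and the Hankel part. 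Your identity is the cleaner mechanism and would shorten the paper's argument; the only price is the extra (easy) implication (iii)$\Rightarrow$(ii) needed to license the application of Lemma \ref{typeI} in your chosen orientation.
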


\begin{proof} Theorem \ref{qfstate},(iii) and Lemma \ref{interval},(iii) imply that (i) and (ii) are equivalent. 
We show the equivalence of (i) and (iii). 
We set $I_{-1}=(a,\infty)$. 
Since $\omega_A$ is of type I, Lemma \ref{typeI} shows that $\omega_A$ and $\omega_B$ are quasi-equivalent if and only if 
the following quantity is finite: 
\begin{eqnarray*}
\lefteqn{\tr\big(A(1_K-B)A+(1_K-A)B(1_K-A)\big)}\\
&=&\sum_{n=-1}^\infty \tr\big(P_+C_\Phi P_{I_n}C_{1-\Phi}P_{I_n}C_\Phi P_+ + P_+C_{1-\Phi}P_{I_n}C_\Phi P_{I_n}C_{1-\Phi}P_+\big) \\
&=&\sum_{n=-1}^\infty (\| C_{1-\Phi}P_{I_n}C_\Phi P_+\|_\HS^2+\|C_\Phi P_{I_n}C_{1-\Phi}P_+\|_\HS^2).
\end{eqnarray*}
Note that since 
\begin{eqnarray*}\lefteqn{
\sum_{n=-1}^\infty (\| C_{1-\Phi}P_{I_n}C_\Phi (1_{\tK}-P_+)\|_\HS^2+\|C_\Phi P_{I_n}C_{1-\Phi}(1_{\tK}-P_+)\|_\HS^2)}\\
&\leq&\sum_{n=-1}^\infty\tr\big((1_{\tK}-P_+)C_\Phi P_{I_n}C_\Phi (1_{\tK}-P_+)
+(1_{\tK}-P_+)C_{1-\Phi}P_{I_n}C_{1-\Phi}(1_{\tK}-P_+)\big)\\
&=& 
\|P_+ C_\Phi (1_{\tK}-P_+)\|_\HS^2+\|P_+ C_{1-\Phi}(1_{\tK}-P_+)\|_\HS^2\\
&=&2\|P_+ C_\Phi (1_{\tK}-P_+)\|_\HS^2<\infty,
\end{eqnarray*}
the above quantity is finite if and only if 
$$\sum_{n=-1}^\infty \| C_{1-\Phi}P_{I_n}C_\Phi \|_\HS^2<\infty.$$
Thanks to Lemma \ref{PQ}, this is equivalent to 
$$\sum_{n=-1}^\infty \| (1_{\tK}-P_{I_n})C_\Phi P_{I_n}\|_\HS^2<\infty.$$
Since $\| (1_{\tK}-P_{I_{-1}})C_\Phi P_{I_{-1}}\|_\HS^2<\infty$, we conclude that (i) is equivalent to (iii). 
\end{proof}

\begin{proof}[Proof of Theorem \ref{CABATIF},(i)] Assume that the assumption of Theorem \ref{CABATIF},(i) holds. 
It suffices to show that for any strictly increasing sequence of non-negative integers 
$\{n_{m}\}_{m=0}^\infty$, the von Neumann algebra $\cA^\Phi_a(E):=\bigvee_{m=0}^\infty \cA^\Phi_a(I_{n_m})$ is a type I factor, 
where $E=\bigcup_{m=0}^\infty I_{n_m}$. 
Note that $\cA^\Phi_a(E)$ is always a factor (see \cite[Remark 8.2]{IS}).
We may assume $n_0=0$ without loss of generality. 
Identifying $B(\cE^\Phi(a))$ with $\cM_A\cap \alpha^\Phi_a(\cM_A)'$, we see that it suffices to show 
the factor  
$$\bigvee_{m=0}^\infty \alpha^\Phi_{a_{n_m}}(\cM_A\cap \alpha^\Phi_{a_{n_m+1}-a_{n_m}}(\cM_A)')$$
is of type I. 
Recall that we have $\cM_A\cap \alpha^\Phi_{t}(\cM_A)'=\pi_A^t(\fA(K_t))''$, where $\pi_A^t(a(f))=i\pi_A(a(f))R^A$. 
Since 
$$\alpha^\Phi_t(R^A)=\pm R^A_{K_{(t,\infty)}}=\pm \epsilon_{K_t,K_{(t,\infty)}}R^AR^A_{K_t},$$ 
we get 
$$\alpha^\Phi_{a_{n_m}}(\cM_A\cap \alpha^\Phi_{a_{n_m+1}-a_{n_m}}(\cM_A)')
=\{\pi_A^t(a(f))R^A_{K_{a_{n_m}}},R^A_{K_{a_{n_m}}}\pi_A^t(a(f))^*;\; f\in K_{I_{n_m}}\}''.$$
Thanks to Lemma \ref{commutant},(ii), it suffices to show that the factor 
$$\bigvee_{m=0}^\infty\{\pi_A(a(f))R^A_{K_{a_{n_m}}},R^A_{K_{a_{n_m}}}\pi_A(a(f))^*;\; f\in K_{I_{n_m}}\}''$$
is of type I. 

Let 
$$\cN_m:=\bigvee_{k=0}^m\{\pi_A(a(f))R^A_{K_{a_{n_k}}},R^A_{K_{a_{n_k}}}\pi_A(a(f))^*;\; f\in K_{I_{n_k}}\}'' ,$$ 
and let $J_m=\bigcup_{k=1}^m(a_{n_{k-1}+1},a_{n_k})$. 
Since 
$$R^A_{K_{a_{n_m}}}=\pm R^A_{K_{J_m}}\prod_{k=0}^{m-1} R^A_{K_{I_{n_k}}},$$
and 
$$R^A_{I_{n_k}}\in \{\pi_A(a(f))R^A_{K_{J_k}},\; R^A_{K_{J_k}}\pi_A(a(f))^*;\; f\in K_{I_{n_k}}\}'',$$
we can show 
$$\cN_m=\bigvee_{k=0}^m\{\pi_A(a(f))R^A_{K_{J_k}},R^A_{K_{J_k}}\pi_A(a(f))^*;\; f\in K_{I_{n_k}}\}''$$
by induction, where we use the convention $R^A_{K_{J_0}}=1$. 
Thus to prove the statement, it suffices to show that the factor 
$$\bigvee_{m=0}^\infty\{\pi_A(a(f))R^A_{K_{J_m}},R^A_{K_{J_m}}\pi_A(a(f))^*;\; f\in K_{I_{n_m}}\}''$$
is of type I. 

Let $B$ be as in Lemma \ref{qe and type I}. 
Since $\pi_A$ and $\pi_B$ are quasi-equivalent, there exists an isomorphism $\theta$ from 
$\cM_A$ onto $\cM_B$ satisfying $\theta(\pi_A(f))=\theta(\pi_B(a(f)))$ for any $f\in K$. 
Since $\theta$ preserves the grading, we may assume $\theta(R^A_{K_I})=R^B_{K_I}$ for any interval $I\subset (0,\infty)$. 
Thus to prove the statement, it suffices to show that the factor 
$$\cN:=\bigvee_{m=0}^\infty\{\pi_B(a(f))R^B_{K_{J_m}},R^B_{K_{J_m}}\pi_B(a(f))^*;\; f\in K_{I_{n_m}}\}''$$
is of type I. 

Since $J_m$ is disjoint from $E$, the self-adjoint unitary $R^B_{K_{J_m}}$ commutes with any $\pi_B(a(f))$ with $f\in K_E$. 
Thus $\cN$ is generated by the factor representation $\pi$ of $\fA(K_E)$ determined by 
$\pi(a(f))=\pi_B(a(f))R^B_{J_m}$ for $f\in K_{I_{n_m}}$. 
Let $\omega$ be the state of $\fA(K_E)$ defined by $\omega(X):=\inpr{\pi(X)\Omega_B}{\Omega_B}$ for $X\in \fA(K_E)$. 
Since $\pi$ is a factor representation, the GNS representation of $\omega$ is quasi-equivalent to $\pi$. 

We claim that $\omega$ coincides with $\omega_{P_EBP_E}$.  
Let $X_i\in \fA(K_{I_{n_i}})$, $i=0,1,\cdots, m$ be of the form  
$$X_i=a^\dagger(f^i_1)a^\dagger(f^i_2)\cdots a^\dagger(f^i_{l_i}),\quad$$
with $f^{i}_j\in K_{I_{n_i}}$, where $a^\dagger(f)$ means either $a(f)$ or $a(f)^*$. 
Then we have $\pi(X_i)=\pi_B(X_i){R^B_{K_{J_i}}}^{l_i},$
and 
$$\omega(X_1X_2\cdots X_m)=\inpr{\pi_B(X_1X_2\cdots X_m)Y\Omega_B}{\Omega_B}$$
where $Y$ is an element in the even part of $\pi_B(\fA(K_E^\perp))''$. 
Since $B$ commutes with $P_{I_n}$ for any $n$, if 
one of $l_1,l_2,\cdots, l_m$ is odd, then approximating $Y$ by polynomials of $\pi_B(a^\dagger(f))$ with 
$f\in K_E^\perp$, we see that the right-hand side is 0  (consider the contributing 2-point functions). 
When $l_1,l_2,\cdots,l_m$ are all even, we have 
$$\omega(X_1X_2\cdots X_m)=\inpr{\pi_B(X_1X_2\cdots X_m)\Omega_B}{\Omega_B}=\omega_B(X_1X_2\cdots X_m),$$
which shows $\omega=\omega_{P_EBP_E}$. 
Thus to prove the statement, it suffices to show that $\omega_{P_EBP_E}$ is of type I. 

Since $P_E$ commutes with $B$, we get 
$$\tr\big(P_EBP_E-(P_EBP_E)^2\big)=\tr\big(P_E(B-B^2)\big)\leq \tr(B-B^2).$$
Now the statement follows from Theorem \ref{qfstate},(iii) and Lemma \ref{qe and type I}. 
\end{proof}

We proceed to the proof of Theorem \ref{CABATIF},(ii). 

\begin{lemma}\label{product state} Let $L_n$, $n=0,1,\cdots,$ be Hilbert spaces, and let $L=\bigoplus_{n=0}^\infty L_n$. 
Assume that $\varphi$ is a $\gamma$-invariant state of $\fA(L)$ satisfying the following two conditions:
\begin{itemize}
\item [(i)] For any natural number $n$ and $X_i\in \fA(L_i)$, $i=0,1,\cdots,n$, 
$$\varphi(X_1X_2\cdots X_n)=\varphi(X_1)\varphi(X_2)\cdots \varphi(X_n).$$
\item [(ii)] The restriction $\varphi_n$ of $\varphi$ to $\fA(L_n)$ is a pure state for any $n$. 
\end{itemize}
Then $\varphi$ is a pure state.  
\end{lemma}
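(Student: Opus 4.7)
The plan is to realize $\varphi$ as a vector state of an explicit irreducible representation on the incomplete tensor product of the $\varphi_n$-GNS spaces, using a Jordan--Klein--Wigner twist to convert the CAR anticommutation across different $L_n$'s into an ordinary tensor structure; purity will then follow from GNS irreducibility.

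For each $n$, hypothesis (ii) produces the irreducible GNS triple $(H_n,\pi_n,\Omega_n)$ of $\varphi_n$, and the $\gamma$-invariance of $\varphi_n$ yields a self-adjoint unitary $R_n\in B(H_n)$ implementing $\gamma|_{\fA(L_n)}$ with $R_n\Omega_n=\Omega_n$. Form the ITPS $H=\bigotimes_n{}^{(\otimes\Omega_n)}H_n$ with factorizable vector $\Omega=\bigotimes_n\Omega_n$, and define $\pi\colon\fA(L)\to B(H)$ by
$$\pi(a(f))=R_0\otimes R_1\otimes\cdots\otimes R_{n-1}\otimes\pi_n(a(f))\otimes 1\otimes 1\otimes\cdots,\qquad f\in L_n,$$
extended $\C$-linearly to $L=\bigoplus_nL_n$. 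The CAR relations follow from a direct computation: each $R_k$ anticommutes with $\pi_k(a(g))$, so interchanging $\pi(a(f))$ and $\pi(a(g))$ for $f\in L_m$, $g\in L_n$ with $m<n$ produces exactly the sign required by anticommutation.

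To identify the vector state $\langle\pi(\cdot)\Omega,\Omega\rangle$ with $\varphi$, bring any monomial into the ordered form $\pm X_{n_1}X_{n_2}\cdots X_{n_k}$ with $n_1<\cdots<n_k$ and $X_{n_j}\in\fA(L_{n_j})$; the sign from reordering is the same on both sides. Applying $\pi$ and evaluating on $\Omega$, the twist factors contribute $R_\ell^{\deg X_{n_j}}\Omega_\ell=\Omega_\ell$, reducing the expectation to $\prod_j\varphi_{n_j}(X_{n_j})$, equal to $\varphi(X_{n_1}\cdots X_{n_k})$ by hypothesis (i); the $\gamma$-invariance of each $\varphi_n$ kills terms of odd parity.

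Finally, prove that $\pi$ is irreducible. By induction on $N$, using at the inductive step the containment $R_0\otimes\cdots\otimes R_{N-1}\otimes 1\in\pi(\fA(L_0\oplus\cdots\oplus L_{N-1}))''$ to untwist the odd generators of $\pi(\fA(L_N))$, one obtains
$$\pi(\fA(L_0\oplus\cdots\oplus L_N))''=B(H_0\otimes\cdots\otimes H_N)\otimes 1_{H_{>N}}.$$
Hence any $T\in\pi(\fA(L))'$ lies in $\bigcap_N 1_{H_{(N)}}\otimes B(H_{>N})$; applying $T$ to $\Omega$, the factorizability of $\Omega$ in the ITPS forces $T\Omega\in\bigcap_N\C\Omega_{(N)}\otimes H_{>N}=\C\Omega$, so $T\Omega=\lambda\Omega$, and cyclicity of $\Omega$ (from irreducibility of each $\pi_n$) gives $T=\lambda 1$. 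Uniqueness of the GNS representation then identifies $\pi$ with $\pi_\varphi$, so $\varphi$ is pure. The main obstacle is the irreducibility step: the odd part of $\pi(\fA(L_n))$ is entangled with the Klein twist $R_0\otimes\cdots\otimes R_{n-1}$ and can be disentangled only through products across different $L_n$'s, so the induction must carefully chase how the twist gets absorbed into the previously-built factors before the ITPS intersection argument can be applied.
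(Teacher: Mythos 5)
Your proof is correct and follows essentially the same route as the paper: build the incomplete tensor product of the GNS spaces of the $\varphi_n$, define the Klein--Jordan--Wigner twisted representation $\pi(a(f))=R_0\otimes\cdots\otimes R_{n-1}\otimes\pi_n(a(f))\otimes 1\otimes\cdots$, check that the vector state of $\Omega=\bigotimes_n\Omega_n$ reproduces $\varphi$, and conclude purity from irreducibility of $\pi$. The only differences are ones of detail: you verify irreducibility by the inductive untwisting argument (which the paper asserts without proof) and identify the vector state with $\varphi$ by direct computation on ordered monomials, whereas the paper invokes the fact that conditions (i)--(ii) together with the $\varphi_n$ determine the state uniquely; both are valid.
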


\begin{proof} 
Let $(H_n,\pi_n,\Omega_n)$ be the GNS triple of $\varphi_n$, and let 
$H=\bigotimes_{n=0}^\infty{}^{(\otimes \Omega_n)}H_n$ be the ITPS of the Hilbert spaces $\{H_n\}_{n=0}^\infty$ with respect to 
the reference vectors $\{\Omega_n\}_{n=0}^\infty$. 
We set $\Omega=\bigotimes_{n=0}^\infty \Omega_n$. 
Since $\varphi_n$ is a $\gamma$-invariant state of $\fA(L_n)$, there exists a self-adjoint unitary $R_n\in B(H_n)$ satisfying 
$R_n\pi_n(X)\Omega_n=\pi_n(\gamma(X))$ for all $X\in \fA(L_n)$. 
We introduce a representation $\pi$ of $\fA(L)$ on $H$ by setting $\pi(a(f))$ for $f\in L_n$ as 
$$\pi(a(f))=\left\{
\begin{array}{ll}
\pi_0(a(f))\otimes 1_{\bigotimes_{k=1}^\infty H_k} , &\quad n=0 \\
R_0\otimes R_1\otimes \cdots \otimes R_{n-1}\otimes  \pi_n(a(f))\otimes 1_{\bigotimes_{k=n+1}^\infty H_k}, 
&\quad n>0
\end{array}
\right..$$
Then $\pi$ is irreducible, and the pure state $\psi$ of $\fA(L)$ defined by $\psi(X)=\inpr{\pi(X)\Omega}{\Omega}$ 
satisfies the two conditions (i) and (ii). 
Moreover, the restriction of $\psi$ to $\fA(L_n)$ coincides with $\varphi_n$. 
Since $\{\varphi_n\}_{n=0}^\infty$ and the condition (i) uniquely determine $\varphi$, we conclude that 
$\varphi=\psi$ and it is a pure state.  
\end{proof}

\begin{lemma}\label{decomposable vector} If the assumption of Theorem \ref{CABATIF},(ii) holds, then 
there exist normalized vectors $V\in \cE_\Phi(a)$, $V_n\in \cE_\Phi(a_{n+1}-a_n)$, and $W_n\in \cE_\Phi(a-a_{n+1})$, $n=0,1,2,\cdots,$ 
such that $V$ is factorized as $V=V_0V_1V_2\cdots V_nW_n$, and $\gamma(V_n)=\pm V_n$, $\gamma(W_n)=\pm W_{n}$ 
for any non-negative integer $n$. 
\end{lemma}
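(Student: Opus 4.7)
The plan is to extract from the CABATIF structure a $\gamma$-fixed factorizable vector $V$ via Lemma~\ref{fixed product vector}, and then to read off the desired factorization $V = V_0 V_1 \cdots V_n W_n$ by examining the minimal projections associated with $V$ in each $\cA^\Phi_a(I_n)$.

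First, by Theorem~\ref{ArakiWoods}, the CABATIF $\{\cA^\Phi_a(I_n)\}_{n=0}^\infty$ is unitarily equivalent to a tensor product factorization. This identifies $\cE_\Phi(a)$ with an ITPS $\bigotimes_{n=0}^\infty (H_n, \xi_n)$ so that $\cA^\Phi_a(I_n) = B(H_n) \otimes 1_{\bigotimes_{k \neq n} H_k}$, and the isomorphism $\cA^\Phi_a(I_n) \cong B(\cE_\Phi(a_{n+1} - a_n))$ coming from left multiplication gives an identification $H_n \cong \cE_\Phi(a_{n+1} - a_n)$. Since $\gamma$ commutes with every $\alpha^\Phi_t$, it restricts to an order-two automorphism of the type~I factor $B(\cE_\Phi(a))$ implemented by a self-adjoint unitary $R$, and this $R$ preserves each $\cA^\Phi_a(I_n)$. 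I would then apply Lemma~\ref{fixed product vector} to obtain self-adjoint unitaries $R_n \in B(H_n)$ implementing the grading on $\cA^\Phi_a(I_n)$, together with a product vector $\eta = \bigotimes_n \eta_n$ satisfying $R_n \eta_n = \eta_n$ and $R\eta = \pm \eta$. Setting $V = \eta/\|\eta\|$, I obtain a normalized factorizable vector with $\gamma(V) = \pm V$.

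Let $e_n \in B(H_n)$ be the rank-one projection onto $\C \eta_n$; the identity $R_n e_n R_n^* = e_n$ implies that the minimal projection $p_n = e_n \otimes 1 \in \cA^\Phi_a(I_n)$ satisfies both $p_n V = V$ and $\gamma(p_n) = p_n$. Under the product system identification $\cE_\Phi(a) = \cE_\Phi(a_n) \otimes \cE_\Phi(a_{n+1} - a_n) \otimes \cE_\Phi(a - a_{n+1})$, the projection $p_n$ writes as $1 \otimes \tilde e_n \otimes 1$ with $\tilde e_n$ a rank-one projection onto $\C V_n$ for some normalized $V_n \in \cE_\Phi(a_{n+1} - a_n)$, and the $\gamma$-invariance of $p_n$ translates into $\gamma(V_n) = \pm V_n$. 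The relation $p_n V = V$ then forces $V \in \cE_\Phi(a_n) \otimes \C V_n \otimes \cE_\Phi(a - a_{n+1})$. A straightforward induction produces the factorization $V = V_0 V_1 \cdots V_n W_n$ with $W_n \in \cE_\Phi(a - a_{n+1})$: the base case $n = 0$ follows from $\cE_\Phi(0) = \C$, while in the inductive step I combine $V = V_0 \cdots V_{n-1} W_{n-1}$ with $V \in \cE_\Phi(a_n) \otimes \C V_n \otimes \cE_\Phi(a - a_{n+1})$ and cancel the nonzero factor $V_0 \cdots V_{n-1}$ to obtain $W_{n-1} = V_n W_n$. The isometry of the product shows that $W_n$ is automatically normalized, and applying $\gamma$ to $V = V_0 V_1 \cdots V_n W_n$ multiplicatively and cancelling yields $\gamma(W_n) = \pm W_n$.

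The main obstacle is the bookkeeping in passing from the Araki--Woods ITPS picture back to the product system picture — specifically, checking that the minimal projections $p_n$ of $\cA^\Phi_a(I_n)$ detecting $V$ correspond to rank-one projections onto genuine product-system vectors in $\cE_\Phi(a_{n+1} - a_n)$, and that the $\gamma$-invariance of these projections (the extra information extracted from Lemma~\ref{fixed product vector} beyond mere factorizability) is exactly what yields grading-fixedness of the individual $V_n$.
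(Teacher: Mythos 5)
Your proposal is correct and follows essentially the same route as the paper: Araki--Woods (Theorem \ref{ArakiWoods}) to realize $\cE_\Phi(a)$ as an ITPS, Lemma \ref{fixed product vector} applied to the self-adjoint unitary implementing the grading on $B(\cE_\Phi(a))$ to produce the product vector $V$ and the $\gamma$-invariant minimal projections $p_n$, and then reading off $V_n$ and $W_n$ by induction. The one step you compress --- that $\gamma$-invariance of $p_n$ yields $\gamma(V_n)=\pm V_n$ --- rests on the grading of $B(\cE_\Phi(a))$ restricting to the grading of $B(\cE_\Phi(a_{n+1}-a_n))$ under the canonical identification (i.e.\ multiplicativity of $\gamma$ on products), which is exactly what the paper verifies explicitly via the relations among the unitaries $R^A_{K_{I_n}}$ and $\alpha^\Phi_{a_n}(R^A_{a_{n+1}-a_n})=\pm R^A_{I_n}$.
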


\begin{proof}
Assume that $\{\cA^\Phi_a(I_n)\}_{n=0}^\infty$ is a CABATIF. 
Then thanks to Theorem \ref{ArakiWoods}, there exists a sequence of Hilbert spaces with normalized vectors 
$\{(H_n,\xi_n)\}_{n=0}^\infty$ and unitary $U$ from ITPS $H:=\bigotimes_{n=0}^\infty {}^{\otimes \xi_n}H_n$ onto 
$\cE_\Phi(a)$ such that $U\cM_nU^*=\cA^\Phi_a(I_n)$, where 
$$\cM_n=B(H_n)\otimes \C1_{\bigotimes_{m\neq n}H_m}.$$

For $X\in \cM_A\cap \alpha^\Phi_a(\cM_A)'$, we denote by $\sigma(X)\in B(\cE_\Phi(a))$ the corresponding left multiplication operator. 
We claim that for any $0<t<a$, there exists $\epsilon_t\in \{1,-1\}$ such that 
$\gamma(X)=\epsilon_t\sigma(R^A_{K_t})X$ for any $X\in \cE^\Phi(t)$. 
Indeed, let $\epsilon_t$ be the constant determined by $\alpha^\Phi_t(R^A)=\epsilon_t R^A_{K_t}R^A$. 
Then 
$$\gamma(X)=R^AX{R^A}^*=R^A\alpha^\Phi_t(R^A)^*X=\epsilon_t R^A_{K_t}X,$$
which shows the claim. 

The claim (or Lemma \ref{commutant}) implies that for any $X\in \cM_A\cap \alpha^\Phi_a(\cM_A)'$ we have 
$R^A_{K_a}X{R^A_{K_a}}^*=\gamma(X)$. 
Thus $\sigma(R^A_{K_a})$ is a self-adjoint unitary satisfying 
$$\sigma(R^A_{K_a})\cA^\Phi_a(I_n)\sigma(R^A_{K_a})^*=\cA^\Phi_a(I_n).$$
For the same reason, the operator $\sigma(R^A_{K_{I_n}})$ is a self-adjoint unitary in $\cA^\Phi_a(I_n)$ satisfying 
\begin{equation}\label{eq in proof of (ii)}
\sigma(R^A_{K_a})X\sigma(R^A_{K_a})^*=\sigma(R^A_{K_{I_n}})X\sigma(R^A_{K_{I_n}})^*,\quad \forall X\in \cA^E_a(I_n).
\end{equation}

Applying Lemma \ref{fixed product vector} to the self-adjoint unitary $R=U^*\sigma(R^A_{K_a})U\in B(H)$, 
we get a product vector $\eta=\bigotimes_{n=1}^\infty\eta_n\in H$ and self-adjoint unitaries $R_n\in B(H_n)$ 
satisfying the three conditions in the conclusion of Lemma \ref{fixed product vector}. 
We may assume $\|\eta_n\|=1$ by normalizing each $\eta_n$. 
We set $V:=U\eta$. 
Then we have $\gamma(V)=\epsilon_a \sigma(R^A_{K_a})V=\pm V$. 

Let $e_n\in \cM_n$ be the minimal projection satisfying $e_n\eta=\eta$ for all $n$, and set $f_n=Ue_nU^*$, which 
is a minimal projection of $\cA^\Phi_a(I_n)$. 
Then we have $f_nV=V$ for all $n$. 
For each $n$, we can choose a normalized vector $V_n\in \cE_\Phi(a_{n+1}-a_n)$ so that for any $X\in  \cE_\Phi(a_n)$ and 
$Y\in  \cE_\Phi(a-a_{n+1})$ we have $f_n(XV_nY)=XV_nY$. 
Since the self-adjoint unitary $U\rho_n(R_n)U^*\in \cA^\Phi_a(I_n)$ satisfies the same equation as 
(\ref{eq in proof of (ii)}) in place of $\sigma(R^A_{K_{I_n}})$, 
we have either $U\rho_n(R_n)U^*=\sigma(R^A_{K_{I_n}})$ or $UR_nU^*=-\sigma(R^A_{K_{I_n}})$. 
Thus $\rho_n(R_n)e_n\rho_n(R_n)^*=e_n$ implies $\sigma(R^A_{K_{I_n}})f_n\sigma(R^A_{K_{I_n}})^*=f_n$. 
Since $f_n$ is a minimal projection of $\cA^\Phi_a(I_n)$ and $\sigma(R^A_{K_{I_n}})\in \cA^\Phi_a(I_n)$ 
is a self-adjoint unitary, this shows that $XV_nY$ is an eigenvector of 
$\sigma(R^A_{K_{I_n}})$ and $R^A_{K_{I_n}}XV_nY=\pm XV_nY$. 
On the other hand, since $\alpha^\Phi_{a_n}(R^A_{a_{n+1}-a_n})=\pm R^A_{I_n}$ and 
$$R^A_{K_{I_n}}XV_nY=X({\alpha^\Phi_{a_n}}^{-1}(R^A_{I_n})V_n)Y,$$
we see that $V_n$ is an eigenvector of $\sigma(R^A_{a_{n+1}-a_n})$. 
Thus we get $\gamma(V_n)=\pm V_n$. 
Letting  $W_n=(V_1V_2\cdots V_n)^*V$, we finish the proof.
\end{proof}

\begin{proof}[Proof of Theorem \ref{CABATIF},(ii)] 
Since $\|(1_{\tK}-P_O)C_\Phi P_O\|_\HS^2=\tr\big(P_OAP_O-(P_OAP_O)^2\big)$, 
it suffices to show that the restriction of $\pi_A$ to $\fA(K_O)$ is a type I representation 
thanks to Theorem \ref{qfstate}(ii),(iii). 

Let $L_n=K_{a_{2n+1}-a_{2n}}$, and let $L=\bigoplus_{n=0}^\infty L_n$. 
We denote by $\pi$ the representation of $\fA(L)$ on $H_A$ determined by 
$$\pi(a(f))=\pi_A(a(S_{a_{2n}}f))=\alpha^\Phi_{a_{2n}}(\pi_A(a(f))),\quad f\in L_n.$$
Since $\pi(\fA(L))=\pi_A(\fA(K_O))$, it suffices to show that $\pi$ is a type I representation. 
Let $V$, $V_n$, and $W_n$ be the normalized vectors obtained in Lemma \ref{decomposable vector}. 
We set $\varphi(X)=\inpr{\pi(X)V\Omega_A}{V\Omega_A}$ for $X\in \fA(L)$. 
Then $\varphi$ is a state of $\fA(L)$ whose GNS representation is quasi-equivalent to $\pi$. 
We show that $\varphi$ is pure using Lemma \ref{product state}. 

Lemma \ref{making a state} shows that there exists a $\gamma$-invariant pure state $\varphi_n$ of $\fA(L_n)$ 
satisfying $V_n^*\pi_A(X)V_n=\varphi_n(X)1$ for $\forall X\in \fA(L_n)$. 
Let $X_i\in \fA(L_i)$, $i=1,2,\cdots,n$. 
Then 
\begin{eqnarray*}\lefteqn{V^*\pi_A(X_0X_1\cdots X_n)V}\\
&=&W_{2n}^*V_{2n}^*\cdots V_1^*V_0^*\pi_A(X_0)\alpha^\Phi_{a_2}(\pi_A(X_1))\cdots \alpha^\Phi_{a_{2n}}(\pi_A(X_n))V_0V_1\cdots V_{2n}W_{2n}\\
 &=&W_{2n}^*V_{2n}^*\cdots V_1^*V_0^*\pi_A(X_0)V_0V_1\pi_A(X_1)\cdots \alpha^\Phi_{a_{2n}-a_2}(\pi_A(X_n))V_2\cdots V_{2n}W_{2n} \\
 &=&\varphi_0(X_0)W_{2n}^*V_{2n}^*\cdots V_2^*\pi_A(X_1)\cdots \alpha^\Phi_{a_{2n}-a_2}(\pi_A(X_n))V_2\cdots V_{2n}W_{2n} \\
 &=&\varphi_0(X_0)\varphi_1(X_1)W_{2n}^*V_{2n}^*\cdots V_4^*\pi_A(X_2)\cdots \alpha^\Phi_{a_{2n}-a_4}(\pi_A(X_n))V_4\cdots V_{2n}W_{2n}\\
 &=&\cdots=\varphi_0(X_0)\varphi_1(X_1)\cdots \varphi_n(X_n). 
\end{eqnarray*}
Thus Lemma \ref{product state} shows that $\varphi$ is a pure state, and in consequence, $\pi$ is a type I representation. 
\end{proof}

In order to apply Theorem \ref{CABATIF} to concrete examples, we state the assumptions of Theorem \ref{CABATIF} 
in terms of the regular part $\hat{\Phi}_0$ of the Fourier transform $\hat{\Phi}$. 

\begin{lemma}\label{integral} Let the notation be as in Theorem \ref{CABATIF}. 
Assume that $\Phi$ is an even function. 
Then
\begin{itemize}
\item [(i)] The assumption of Theorem \ref{CABATIF},(i) holds if and only if 
$$\int_0^\infty \sum_{n=0}^\infty\min\{x,|I_n|\}\tr(|\hat{\Phi}_0(x)|^2)dx<\infty.$$
\item [(ii)] The assumption of Theorem \ref{CABATIF},(ii) holds if and only if 
$$\int_0^\infty  |O\ominus (O+x)|\tr(|\hat{\Phi}_0(x)|^2)dx<\infty,$$
where $O\ominus (O+x)$ is the symmetric difference of $O$ and $O$ translated by $x$. 
\end{itemize}
\end{lemma}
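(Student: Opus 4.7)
The plan is to express each of the Hilbert--Schmidt conditions in Theorem \ref{CABATIF} as an integral of $\tr(|\hat{\Phi}_0(t)|^2)$, by reducing them to repeated applications of the single-interval formula in Lemma \ref{interval},(ii). Evenness of $\Phi$ forces evenness of $\hat{\Phi}_0$ away from the origin (since $\hat{\Phi}$ itself is even as a distribution), which will let me fold $\int_\R$ onto $\int_0^\infty$ at the end of each part.

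For (i), I would take $I=I_n$ and $J=\R\setminus\overline{I_n}$ in Lemma \ref{interval},(ii); the complement has at most two connected components so the lemma applies, and $\overline{I_n}\setminus I_n$ is Lebesgue null, so the resulting HS norm equals $\|(1_{\tK}-P_{I_n})C_\Phi P_{I_n}\|_\HS^2$. A direct interval computation gives the geometric identity
$$|(\R\setminus\overline{I_n}+t)\cap I_n|=|I_n|-|(I_n+t)\cap I_n|=\min\{|t|,|I_n|\}.$$
Summing over $n$ and interchanging sum and integral by Tonelli (all integrands are non-negative), then halving via evenness, yields the claimed equivalence.

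For (ii), the obstacle is that both $O$ and $\R\setminus O$ have infinitely many connected components, so Lemma \ref{interval},(ii) cannot be invoked in a single shot. The way around is an orthogonal decomposition. Enumerate the connected components of $\R\setminus\overline{O}$ as a single-interval family $\{J_k\}$ (including the unbounded pieces $(-\infty,0)$ and $(a,\infty)$), so that $P_{\R\setminus O}=\sum_kP_{J_k}$ and $P_O=\sum_mP_{I_{2m}}$ are strong sums of mutually orthogonal projections. Pairwise orthogonality then gives
$$\|(1_{\tK}-P_O)C_\Phi P_O\|_\HS^2=\sum_{k,m}\|P_{J_k}C_\Phi P_{I_{2m}}\|_\HS^2,$$
and each summand is handled by Lemma \ref{interval},(ii). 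Swapping sum and integral by Tonelli, disjoint additivity of Lebesgue measure and translation invariance recombine the cross-sum as
$$\sum_{k,m}|(J_k+t)\cap I_{2m}|=|(\R\setminus O+t)\cap O|=|O\setminus(O+t)|=\tfrac{1}{2}|O\ominus(O+t)|,$$
the last identity using $|O+t|=|O|$. Folding $\int_\R$ onto $\int_0^\infty$ by evenness produces the integral in the statement.

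The main point requiring care is the orthogonal-decomposition step in (ii): one must verify that the double sum $\sum_{k,m}\|P_{J_k}C_\Phi P_{I_{2m}}\|_\HS^2$ genuinely recovers $\|(1_{\tK}-P_O)C_\Phi P_O\|_\HS^2$ with the correct value in $[0,\infty]$, which follows from the identity $\|T\|_\HS^2=\sum_j\|E_jT\|^2_\HS$ for any strong sum of pairwise orthogonal projections $\sum_jE_j=E$. Once this is in hand, the remainder is elementary interval geometry and Fubini--Tonelli on non-negative integrands.
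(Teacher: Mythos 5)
Your proof is correct and follows essentially the same route as the paper: part (i) is the single application of Lemma \ref{interval},(ii) with $J=\R\setminus\overline{I_n}$, and part (ii) uses exactly the paper's decomposition of $1_{\tK}-P_O$ over the connected components of the complement (the two unbounded pieces together with the odd-indexed intervals) followed by termwise application of Lemma \ref{interval},(ii) and Tonelli. The details you supply --- the additivity $\|T\|_\HS^2=\sum_{k,m}\|E_kTF_m\|_\HS^2$, the identity $|O\setminus(O+t)|=\tfrac12|O\ominus(O+t)|$ via $|O+t|=|O|<\infty$, and the folding by evenness of $\hat{\Phi}_0$ --- are precisely the steps the paper leaves implicit.
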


\begin{proof} (i) The statement follows from $|(I_n\setminus (I_n+t))|=\min\{|t|,|I_n|\}$ and  Lemma \ref{interval},(ii).

(ii) We set $J_{-1}:=(-\infty,0)$, $J_0:=(a,\infty)$, and $J_n=I_{2n-1}$ for $n\in \N$. 
Then 
$$\|(1_{\tK}-P_O)C_\Phi P_O\|_\HS^2=\sum_{m=-1}^\infty\sum_{n=0}^\infty\|P_{J_m}C_\Phi P_{I_{2n}}\|_{H.S.}^2.$$
The statement follows from this and Lemma \ref{interval},(ii). 
\end{proof}

Lemma \ref{PQ} implies $\|(1_{\tK}-P_E)C_\Phi P_E\|_\HS^2=\|C_{1-\Phi}P_EC_\Phi\|_\HS^2$. 
Thus by using Fourier transform, we can also get the following criteria, though we do not use them in this paper.

\begin{lemma} Let the notation be as in Theorem \ref{CABATIF}.
Then 
\begin{itemize}
\item [(i)] The assumption of Theorem \ref{CABATIF},(i) holds if and only if 
$$\int_{\R^2}\frac{\tr(|\Phi(p)-\Phi(q)|^2)}{|p-q|^2} \sum_{n=0}^\infty\sin^2\frac{|I_n|(p-q)}{2}dpdq<\infty.$$
\item [(ii)] The assumption of Theorem \ref{CABATIF},(ii) holds if and only if 
$$\int_{\R^2}\tr(|\Phi(p)-\Phi(q)|^2)|\hat{\chi_O}(p-q)|^2dpdq<\infty.$$
\end{itemize}
\end{lemma}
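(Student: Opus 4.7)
The plan is to derive both equivalences from one Hilbert-Schmidt identity expressing $\|(1_{\tK}-P_E)C_\Phi P_E\|_\HS^2$ for a measurable set $E\subset(0,\infty)$ in terms of $\hat{\chi_E}$ and $\Phi$. By the preceding remark, Lemma \ref{PQ} applied to the projections $P_E$ and $C_\Phi$ already gives $\|(1_{\tK}-P_E)C_\Phi P_E\|_\HS^2=\|C_{1-\Phi}P_EC_\Phi\|_\HS^2$, so I focus on the right-hand side.

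Next, I conjugate $C_{1-\Phi}P_EC_\Phi$ by the Fourier transform. This converts $C_\Phi$ into multiplication by the matrix-valued symbol $\Phi(p)$, and converts $P_E$ into the integral operator on $L^2(\R,\C^N)$ with $M_N(\C)$-valued kernel $\tfrac{1}{2\pi}\hat{\chi_E}(p-q)\,1_{\C^N}$. Consequently $C_{1-\Phi}P_EC_\Phi$ becomes an integral operator whose matrix kernel is $\tfrac{1}{2\pi}(1-\Phi(p))\hat{\chi_E}(p-q)\Phi(q)$, and the square of its Hilbert-Schmidt norm equals $\tfrac{1}{4\pi^2}\int_{\R^2}|\hat{\chi_E}(p-q)|^2\tr\bigl(\Phi(q)(1-\Phi(p))\Phi(q)\bigr)\,dp\,dq$, with non-negative integrand since $\Phi(p)$ is a projection.

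Computing the same Hilbert-Schmidt norm from the adjoint $C_\Phi P_EC_{1-\Phi}$ yields the analogous expression with the roles of $p$ and $q$ interchanged. Adding the two equal quantities and invoking the projection identity $\tr(|\Phi(p)-\Phi(q)|^2)=\tr(\Phi(p))+\tr(\Phi(q))-2\tr(\Phi(p)\Phi(q))$, I arrive at the master formula
\[
\|(1_{\tK}-P_E)C_\Phi P_E\|_\HS^2=\frac{1}{8\pi^2}\int_{\R^2}|\hat{\chi_E}(p-q)|^2\tr(|\Phi(p)-\Phi(q)|^2)\,dp\,dq
\]
as an equality in $[0,\infty]$. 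Part (ii) is this identity applied with $E=O$. For part (i), I apply it with $E=I_n$, use the explicit formula $|\hat{\chi_{I_n}}(r)|^2=4\sin^2(|I_n|r/2)/r^2$, sum in $n$, and exchange sum and integral by Tonelli, recovering the integral in (i) up to an irrelevant positive constant.

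The only delicate point is the symmetrization: neither $\int|\hat{\chi_E}(p-q)|^2\tr(\Phi(p))\,dp\,dq$ nor $\int|\hat{\chi_E}(p-q)|^2\tr(\Phi(p)\Phi(q))\,dp\,dq$ is separately convergent in interesting cases, so I must keep the integrands non-negative throughout, manipulating the sum of the two Hilbert-Schmidt-norm integrands from the operator and its adjoint rather than splitting them term by term. With that observed, everything else is a routine Fourier-side kernel computation.
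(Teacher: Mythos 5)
Your proof is correct and follows exactly the route the paper indicates: the paper gives no argument beyond noting that Lemma \ref{PQ} reduces everything to $\|C_{1-\Phi}P_E C_\Phi\|_\HS$ and that one then passes to the Fourier side, and your kernel computation is the natural completion of that sketch. The symmetrization via the adjoint, together with the pointwise identity $\tr\bigl(\Phi(q)(1-\Phi(p))\bigr)+\tr\bigl(\Phi(p)(1-\Phi(q))\bigr)=\tr(|\Phi(p)-\Phi(q)|^2)$ and the non-negativity of each term, is precisely what is needed to make the master formula valid as an identity in $[0,\infty]$, and the specializations $E=I_n$ (with Tonelli) and $E=O$ give (i) and (ii).
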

\section{Examples}\label{examples}

Applying Theorem \ref{CABATIF} to concrete sequences, we get the following theorem, which provides us with 
a computable invariant for type III Toeplitz CAR flows. 

\begin{theorem}\label{Sobolev 2} Let $\Phi\in L^\infty(\R)\otimes M_N(\C)$ be an admissible symbol satisfying 
$\Phi(p)=\Phi(-p)$ for all $p\in \R$, and let $0<\mu<1$. 
We set $a_0=0$, 
$$a_n=\sum_{k=1}^n\frac{1}{k^{1/(1-\mu)}},\quad n\in \N,$$ 
and $a=\lim_{n\to \infty}a_n$.    
Then the following three conditions are equivalent: 
\begin{itemize} 
\item [$(1)$] The type I factorization $\{\cA^{\Phi}_a(a_n,a_{n+1})\}_{n=0}^\infty$ is a CABATIF.\medskip 
\item [$(2)$] 
$$\int_0^\infty\int_0^\infty\frac{\tr(|\Phi(p)-\Phi(q)|^2)}{|p-q|^{1+\mu}}dpdq<\infty.$$
\item [$(3)$]
$$\int_0^\infty x^\mu \tr(|\hat{\Phi}_0(x)|^2)dx<\infty.$$
\end{itemize}
Moreover, 
\begin{itemize}
\item [(i)] If $\{\cA^{\Phi}_a(a_n,a_{n+1})\}_{n=0}^\infty$ is a CABATIF, then 
$$\int_0^\infty\tr(|\Phi(2p)-\Phi(p)|^2) \frac{dp}{p^{\mu}}<\infty.$$
\item [(ii)] If $\Phi$ is differentiable and 
$$\int_0^\infty \tr(|\Phi'(p)|^2)p^{2-\mu}dp<\infty,$$
then $\{\cA^{\Phi}_a(a_n,a_{n+1})\}_{n=0}^\infty$ is a CABATIF. 
\end{itemize}
\end{theorem}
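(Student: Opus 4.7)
The plan is to combine Theorem \ref{CABATIF}, Lemma \ref{integral}, and Lemma \ref{Besov}; the specific choice $|I_n| = (n+1)^{-1/(1-\mu)}$ is arranged so that the relevant geometric estimates convert into the Sobolev-type exponent $\mu$. The equivalence $(2) \Leftrightarrow (3)$ is an immediate consequence of Lemma \ref{Besov} applied to each matrix entry $\Phi_{ij}$ and summed over $(i,j)$, the sum turning the squared modulus into the trace. Likewise, the supplementary assertions (i) and (ii) follow directly from the corresponding parts of Lemma \ref{Besov}, applied entrywise. So the essential content is the equivalence $(1) \Leftrightarrow (3)$.

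For $(3) \Rightarrow (1)$, I would invoke Theorem \ref{CABATIF}(i); by Lemma \ref{integral}(i), this reduces to showing
$$\int_0^\infty \sum_{n=0}^\infty \min(x, |I_n|)\,\tr(|\hat{\Phi}_0(x)|^2)\,dx < \infty.$$
With $|I_n| \geq x$ iff $n+1 \leq x^{-(1-\mu)}$, I split the sum at this threshold. Both the head $x \cdot x^{-(1-\mu)}$ and the tail $\sum_{n+1 > x^{-(1-\mu)}}(n+1)^{-1/(1-\mu)}$ (summable because $1/(1-\mu) > 1$) are of order $x^\mu$, so $\sum_n \min(x, |I_n|) \leq C \min(x^\mu, a)$. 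Splitting the outer integral at $x=1$, the small-$x$ part is bounded by $C \int_0^1 x^\mu \tr(|\hat{\Phi}_0(x)|^2)\,dx$, which is finite by (3); the large-$x$ part is bounded by $a\int_1^\infty \tr(|\hat{\Phi}_0(x)|^2)\,dx$, which is finite by admissibility of $\Phi$ (since $1 \leq |x|$ there).

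For $(1) \Rightarrow (3)$, I would apply Theorem \ref{CABATIF}(ii) together with Lemma \ref{integral}(ii): (1) implies
$$\int_0^\infty |O \ominus (O+x)|\,\tr(|\hat{\Phi}_0(x)|^2)\,dx < \infty,\quad O = \bigcup_{n \geq 0} I_{2n}.$$
The crux is then a lower bound $|O \ominus (O+x)| \geq cx^\mu$ for small $x$. Each boundary point $a_{2n+1}$ contributes a piece $(a_{2n+1}, a_{2n+1}+x) \subset (O+x)\setminus O$ provided $x < \min(|I_{2n}|, |I_{2n+1}|)$, and each $a_{2n}$ with $n \geq 1$ contributes a piece $(a_{2n}, a_{2n}+x) \subset O \setminus (O+x)$ provided $x < \min(|I_{2n-1}|, |I_{2n}|)$. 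These pieces lie in pairwise distinct intervals of the partition, hence are disjoint and each of measure $x$. Since $|I_k|$ decreases, the condition is satisfied for all $n$ with $2n+2 \leq x^{-(1-\mu)}$, giving of order $x^{-(1-\mu)}$ such indices and thus $|O \ominus (O+x)| \geq cx \cdot x^{-(1-\mu)} = cx^\mu$. Combined again with admissibility at large $x$, this yields (3).

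The main obstacle I foresee is this lower bound $|O \ominus (O+x)| \gtrsim x^\mu$. Although the picture of shifting $O$ and counting boundary jumps is intuitive, one must check rigorously that the boundary contributions from distinct $a_k$ remain disjoint under translation and that the case of $x$ comparable to some $|I_k|$ does not cause cancellations. All remaining bookkeeping -- the asymptotics $N(x) \asymp x^{-(1-\mu)}$ and the comparison of partial sums of $(n+1)^{-1/(1-\mu)}$ with integrals -- is routine.
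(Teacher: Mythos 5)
Your proposal is correct and follows essentially the same route as the paper: the reduction to Theorem \ref{CABATIF} via Lemma \ref{integral}, with the two-sided comparison $\sum_n\min\{x,|I_n|\}\lesssim x^\mu$ and $|O\ominus(O+x)|\gtrsim x^\mu-x$, is exactly what the paper packages as Lemma \ref{Oestimate} applied to $h(x)=x^{\mu-1}$ (so $h^{-1}(k)=k^{-1/(1-\mu)}$), and your boundary-counting argument for the lower bound is the same as the paper's proof of that lemma. The step you flag as the main obstacle is thus precisely what the paper proves, and your disjointness conditions $x<\min(|I_{2n}|,|I_{2n+1}|)$ etc.\ handle it correctly.
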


\begin{proof} The statement follows from Lemma \ref{Besov}, Lemma \ref{integral}, 
and Lemma \ref{Oestimate} below applied to $h(x)=x^{\mu-1}$. 
\end{proof}

The following lemma is more or less \cite[Lemma 8.6]{IS}. 

\begin{lemma}\label{Oestimate} Let $h(x)$ be a non-negative strictly decreasing continuous function on 
$(0,\infty)$ satisfying 
$\lim_{x\to +0}h(x)=\infty$, $\lim_{x\to \infty}h(x)=0$, and 
$$\int_0^1h(x)dx<\infty.$$ 
We set $a_0=0$, 
$$a_n=\sum_{k=1}^nh^{-1}(k),\quad n\in \N,$$
$I_n=(a_n,a_{n+1})$, and $O=\bigcup_{n=0}^\infty I_{2n}$. 
Then the sequence $\{a_n\}_{n=0}^\infty$ converges, and  
$$x(h(x)-1)\leq |O\ominus (O+x)|\leq 2\sum_{n=0}^\infty \min\{x,|I_n|\}\leq 2\int_0^x h(t)dt,\quad \forall x>0.$$
\end{lemma}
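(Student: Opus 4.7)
The plan has three parts, matching the three assertions of the lemma.

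Convergence of $\{a_n\}$: Since $h^{-1}:(0,\infty)\to(0,\infty)$ is continuous and strictly decreasing, monotonicity gives $h^{-1}(k)\le\int_{k-1}^{k}h^{-1}(y)\,dy$ for $k\ge 2$, so summability of $\{h^{-1}(k)\}$ reduces to showing $\int_1^\infty h^{-1}(y)\,dy<\infty$. A Fubini computation on the set $\{(x,y):y\ge 1,\;0<x<h^{-1}(y)\}=\{(x,y):y\ge 1,\;h(x)>y\}$ rewrites that integral as $\int_0^{h^{-1}(1)}(h(x)-1)\,dx$, which is finite thanks to $\int_0^1 h<\infty$ together with the boundedness of $h$ on $[1,h^{-1}(1)]$ when this interval is nontrivial.

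Upper bound: I would write $|O\ominus(O+x)|=\int_\R|\chi_O(y)-\chi_O(y-x)|\,dy$ and decompose $\chi_O=\sum_{n\ge 0}\chi_{I_{2n}}$. The triangle inequality combined with the elementary formula $|I\ominus(I+x)|=2\min\{x,|I|\}$ for a bounded interval $I$ then yields
$$|O\ominus(O+x)|\le\sum_n|I_{2n}\ominus(I_{2n}+x)|=2\sum_n\min\{x,|I_{2n}|\}\le 2\sum_n\min\{x,|I_n|\}.$$
For the last bound, observe $\min\{x,|I_n|\}=\int_0^x\chi_{\{t<|I_n|\}}(t)\,dt$. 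Interchanging sum and integral and using $|I_n|=h^{-1}(n+1)$ transforms the total into $\int_0^x N(t)\,dt$, where $N(t)=\#\{n\ge 0:h^{-1}(n+1)>t\}=\#\{m\ge 1:m<h(t)\}\le h(t)$.

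Lower bound: We may assume $h(x)\ge 1$, since otherwise the right-hand side is non-positive. Let $M:=\max\{n\ge 0:|I_n|\ge x\}$; because $|I_n|=h^{-1}(n+1)$ and $h^{-1}$ is decreasing, $M+1=\lfloor h(x)\rfloor\ge h(x)-1$. I would then introduce the intervals $J_k:=(a_k,a_k+x)$ for $k=0,1,\ldots,M$. Since $x\le|I_k|$ for each such $k$, these intervals are pairwise disjoint. The crux is to verify $J_k\subset O\ominus(O+x)$: for each $k$ we have $J_k\subset I_k$, so $\chi_O$ is constantly $0$ or $1$ on $J_k$ according to the parity of $k$, while $J_k-x\subset I_{k-1}$ (with the convention that this is $(-\infty,0)$ when $k=0$), where $\chi_O$ takes the opposite value. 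Summing lengths gives $|O\ominus(O+x)|\ge(M+1)x\ge(h(x)-1)x$.

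The main obstacle is the lower bound, because one must produce an explicit disjoint family of sub-intervals of the symmetric difference whose count is controlled by $h(x)$ itself, not by $h$ averaged over $(0,x)$. The two upper bounds and the convergence statement, by contrast, are routine consequences of Tonelli and monotonicity once the integral representations are in place.
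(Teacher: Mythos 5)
Your argument is correct and follows essentially the same route as the paper's proof: both rest on $|I_n|=h^{-1}(n+1)$, exhibit roughly $\lfloloor h(x)\rfloor$ disjoint length-$x$ pieces of the symmetric difference for the lower bound, and compare $\sum_n\min\{x,|I_n|\}$ with $\int_0^x h$ for the upper bound. The only differences are presentational — you package the lower bound as a single family $J_k=(a_k,a_k+x)$ lying in $O\ominus(O+x)$ rather than bounding $|(O+x)\setminus O|$ and $|O\setminus(O+x)|$ separately, and you use a layer-cake count $N(t)\le h(t)$ where the paper splits the sum at the index $n$ with $n\le h(x)<n+1$ — but the substance is identical.

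(Note: the first sentence above contains a typo, "\rfloloor"; it should read $\lfloor h(x)\rfloor$.)
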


\begin{proof}
Note that we have 
$$\sum_{k=n+1}^\infty h^{-1}(k)\leq \int_0^{h^{-1}(n+1)}h(t)dt-nh^{-1}(n+1),$$
and in particular, the sequence $\{a_n\}_{n=0}^\infty$ converges. 
Since $\min\{x,|I_n|\}=|I_n\setminus (I_n\pm x)|$, the middle inequality follows from the definition of $O$. 

For fixed $x>0$, we take the unique non-negative integer $n$ satisfying $h^{-1}(n+1)<x\leq h^{-1}(n)$ (or equivalently, $n\leq h(x)<n+1$). 
Then 
\begin{eqnarray*}
\sum_{k=0}^\infty \min\{x,|I_k|\}&=&\sum_{k=0}^{n-1} x+\sum_{k=n}^\infty |I_k|=nx+\sum_{k=n}^\infty h^{-1}(k+1)\\
&\leq&\int_0^{h^{-1}(n+1)}h(t)dt+n(x-h^{-1}(n+1))\\
&\leq&\int_0^xh(t)dt. 
\end{eqnarray*}
When $n$ is even, counting only contribution from $\{I_{2k}\}_{k=0}^{(n-2)/2}$, we get 
$$|(U+x)\setminus U|\geq \frac{n}{2}x.$$ 
In a similar way, we get $$|U\setminus (U+x)|\geq \frac{n}{2}x,$$ and so
$$|U\ominus (U+x)|\geq nx\geq (h(x)-1)x.$$ 
When $n$ is odd, we have $|(U+x)\setminus U|\geq \frac{n+1}{2}x$ and $|U\setminus (U+x)|\geq \frac{n+1}{2}x$ 
in a similar way, which shows  $|U\ominus (U+x)|\geq xh(x)$. 
\end{proof}

Now we apply Theorem \ref{Sobolev 2} to concrete examples. 

\begin{theorem}\label{uncountable} For $\nu>0$, let $\theta_\nu(p)=(1+p^2)^{-\nu}$, and let 
$$\Phi_\nu(p)=\frac{1}{2}\left(
\begin{array}{cc}
1 &e^{i\theta_\nu(p)}  \\
e^{-i\theta_\nu(p)} &1 
\end{array}
\right). 
$$
Then $\Phi_\nu$ is admissible. 
Let $\alpha^\nu:=\alpha^{\Phi_\nu}$ be the corresponding Toeplitz CAR flow. 
\begin{itemize}
\item [(i)] If $\nu>1/4$, then $\alpha^\nu$ is of type I$_2$. 
\item [(ii)] If $0<\nu\leq 1/4$, then $\alpha^\nu$ is of type III. 
\item [(iii)] If $0<\nu_1<\nu_2\leq 1/4$, then $\alpha^{\nu_1}$ and $\alpha^{\nu_2}$ are not cocycle conjugate. 
\end{itemize}
\end{theorem}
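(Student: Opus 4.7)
The plan is to handle admissibility and parts (i), (ii) by direct computation combined with Theorem~\ref{dichotomy}, and to handle part (iii) via the CABATIF invariant from Theorem~\ref{Sobolev 2}. Admissibility is immediate from Theorem~\ref{Sobolev}\,(ii): since $\theta_\nu'(p) = -2\nu p (1+p^2)^{-\nu-1}$ decays like $|p|^{-2\nu-1}$, one has $\tr(|\Phi_\nu'(p)|^2)\,p = \tfrac12 |\theta_\nu'(p)|^2\,p \in L^1(0,\infty)$ for every $\nu > 0$.

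For (i) and (ii) the natural candidate projection in the dichotomy is $Q_\infty := \lim_{|p|\to\infty}\Phi_\nu(p)$, the rank-one projection whose entries all equal $\tfrac12$. A short computation gives
\begin{equation*}
\tr\bigl(|\Phi_\nu(p)-Q_\infty|^2\bigr) = 1 - \cos\theta_\nu(p) \sim \tfrac12 (1+p^2)^{-2\nu},
\end{equation*}
which is integrable over $\R$ exactly when $\nu > 1/4$. For any other projection $Q \in M_2(\C)$ (the two trivial ones, or a rank-one projection different from $Q_\infty$), the trace $\tr(|\Phi_\nu(p) - Q|^2)$ fails to tend to zero at infinity, so the integral diverges. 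Theorem~\ref{dichotomy} then yields type I$_2$ for $\nu > 1/4$, proving (i), and type III for $\nu \leq 1/4$, proving (ii).

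For (iii) I would feed $\Phi_\nu$ into Theorem~\ref{Sobolev 2} and compute the asymptotics of the two one-sided criteria in the ``moreover'' part of that theorem:
\begin{equation*}
\tr(|\Phi_\nu'(p)|^2)\,p^{2-\mu} \sim 2\nu^2\, p^{-4\nu-\mu}, \qquad \tr\bigl(|\Phi_\nu(2p)-\Phi_\nu(p)|^2\bigr)\,p^{-\mu} \sim \tfrac12(4^{-\nu}-1)^2\,p^{-4\nu-\mu}
\end{equation*}
at infinity. Both integrals converge if and only if $\mu > 1-4\nu$, so Theorem~\ref{Sobolev 2}\,(ii) produces a CABATIF whenever $\mu > 1-4\nu$, while the contrapositive of Theorem~\ref{Sobolev 2}\,(i) rules out CABATIF whenever $\mu < 1-4\nu$. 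Given $0 < \nu_1 < \nu_2 \leq 1/4$, I pick any $\mu$ in the non-empty interval $\bigl(\max\{0,1-4\nu_2\},\,1-4\nu_1\bigr) \subset (0,1)$. For this $\mu$, the type I factorization $\{\cA^{\Phi_{\nu_2}}_a(a_n,a_{n+1})\}_n$ is a CABATIF while $\{\cA^{\Phi_{\nu_1}}_a(a_n,a_{n+1})\}_n$ is not, and since CABATIF-ness of this factorization is a cocycle conjugacy invariant (Example~\ref{invariant for E}), $\alpha^{\nu_1}$ and $\alpha^{\nu_2}$ cannot be cocycle conjugate.

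The main obstacle is the threshold matching in (iii): one needs the necessary condition of Theorem~\ref{Sobolev 2}\,(i) and the sufficient condition of (ii) to produce the \emph{same} critical exponent $\mu^*(\nu) = 1-4\nu$ for this family, which is precisely what allows a single choice of $\mu$ to separate $\alpha^{\nu_1}$ from $\alpha^{\nu_2}$. The remainder of the argument is bookkeeping once the dichotomy theorem and the CABATIF invariant are brought to bear.
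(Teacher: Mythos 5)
Your proposal is correct and follows essentially the same route as the paper: admissibility via Theorem~\ref{Sobolev}\,(ii), parts (i) and (ii) via Theorem~\ref{dichotomy} with $Q_\infty$ as the only viable projection, and part (iii) by choosing $\mu\in(1-4\nu_2,\,1-4\nu_1)$ and playing the necessary condition of Theorem~\ref{Sobolev 2}\,(i) against the sufficient condition of (ii). The paper's proof is just a terser version of the same argument; your explicit asymptotics $\tr(|\Phi_\nu(p)-Q_\infty|^2)=1-\cos\theta_\nu(p)$ and the two $p^{-4\nu-\mu}$ estimates are exactly the computations it leaves to the reader.
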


\begin{proof}
The fact that $\Phi_\nu$ is admissible follows from Theorem \ref{Sobolev},(ii). 
(i) and (ii) follow from Theorem \ref{dichotomy}. 
To show (iii), we choose $\mu$ in the interval $(1-4\nu_2,1-4\nu_1)$, which satisfies $0<\mu<1$. 
Applying Theorem \ref{Sobolev 2},(i),(ii) to this $\mu$ and $\Phi=\Phi_{\nu_i}$, $i=1,2$, we see that 
$\{\cA^{\Phi_{\nu_2}}_a(a_n,a_{n+1})\}_{n=0}^\infty$ is a CABATIF, 
while $\{\cA^{\Phi_{\nu_1}}_a(a_n,a_{n+1})\}_{n=0}^\infty$ is not. 
Therefor $\alpha^{\nu_1}$ and $\alpha^{\nu_2}$ are not cocycle conjugate. 
\end{proof}

\begin{remark} Let $\Phi$ be as in Example \ref{nonconverging}, and let $\mu$ and $\{a_n\}_{n=0}^\infty$ be as 
in Theorem \ref{Sobolev 2}. 
Then Theorem \ref{Sobolev 2},(i) implies that $\{\cA^\Phi_a(a_n,a_{n+1})\}_{n=0}^\infty$ is not a 
CABATIF for any $0<\mu<1$. 
This shows that $\alpha^\Phi$ is not cocycle conjugate to $\alpha^\nu$ for any $\nu$. 
\end{remark}

\thebibliography{99}
\bibitem{AW} H. Araki and E. J. Woods, 
\textit{Complete Boolean algebras of type I factors.} 
Publ. Res. Inst. Math. Sci. Ser. A \textbf{2} (1966), 157-242.
 
\bibitem{Ara} H. Araki,  
\textit{On quasifree states of CAR and Bogoliubov automorphisms.}  
Publ. Res. Inst. Math. Sci. \textbf{6}  (1970/71), 385--442.

\bibitem{Ar} W. Arveson, 
\textit{Continuous analogues of Fock spaces IV:  Essential states.} 
Acta Math. \textbf{164} (3/4) 265-300, 1990. 
  
\bibitem{Arv} W. Arveson, 
\textit{Non-commutative Dynamics and $E$-semigroups.} 
Springer Monograph in Math. (Springer 2003).



\bibitem{pdct} B. V. Rajarama Bhat and R. Srinivasan, 
\textit{On product systems arising from sum systems.} 
Infinite dimensional analysis and related topics, Vol. 8, Number 1, March 2005.


\bibitem{F} J. J. Foit, 
\textit{Abstract twisted duality for quantum free Fermi fields.} 
Publ. Res. Inst. Math. Sci. \textbf{19} (1983), 729--741. 


\bibitem{I} M. Izumi, 
\textit{A perturbation problem for the shift semigroup.} 
J. Funct. Anal.  \textbf{251}  (2007), 498--545.

\bibitem{I1} M. Izumi, 
\textit{Every sum system is divisible.}
Trans. Amer. Math. Soc. \textbf{361} (2009), 4247--4267. 

\bibitem{IS} M. Izumi and R. Srinivasan, 
\textit{Generalized CCR flows.}  
Comm. Math. Phys. \textbf{281}  (2008),  529--571.


\bibitem{L} V. Liebscher, 
\textit{Random sets and invariants for (type II) continuous tensor product systems of Hilbert spaces.} 
to appear in Mem. Amer. Math. Soc. arXiv:math/0306365. 



\bibitem{PO0} R. T. Powers, 
\textit{An index theory for semigroups of $*$-endomorphisms of $B(H)$ and type $II_1$ factors.} 
Can. J. Math., \textbf{40} (1988), 86-114.

\bibitem{Po1} R. T. Powers, 
\textit{A nonspatial continuous semigroup of $*$-endomorphisms of $B(H)$.} 
Publ. Res. Inst. Math. Sci.  \textbf{23} (1987), 1053-1069.

\bibitem{PS} R. T. Powers, and E. St{\o}rmer, 
\textit{Free States of the Canonical Anticommutation Relations.} 
Comm. Math. Phys., \textbf{16} (1970), 1-33.

\bibitem{Pr} G. L. Price, B. M. Baker, P. E. T. Jorgensen and P. S. Muhly, 
(Editors), {\it Advances in Quantum Dynamics}. 
(South Hadley, MA, 2002) Contemp.
Math. 335, Amer. Math. Society, Providence, RI (2003).


\bibitem {Ta} M. Takesaki,  
\textit{Theory of Operator Algebras. I.} 
Encyclopaedia of Mathematical Sciences, 124. 
Operator Algebras and Non-commutative Geometry, 5. 
Springer-Verlag, Berlin, 2002. 

\bibitem{T1} B. Tsirelson, 
\textit{Non-isomorphic product systems.} 
Advances in Quantum Dynamics (South Hadley, MA, 2002), 273--328, 
Contemp. Math., \textbf{335}, Amer. Math. Soc., Providence, RI, 2003. 

\bibitem{T2} B. Tsirelson,  
\textit{Spectral densities describing off-white noises.} 
Ann. Inst. H. Poincar  Probab. Statist. \textbf{38} 
(2002), 1059--1069.

\end{document}